\newif \ifpreprint
\newcommand{\plotpin}[5]{node[pos=#1,pin={[pin edge={#5},pin distance=#2]#3:{#4}}] {}}
\newenvironment{plotpage}[1]
{
\begin{minipage}{\figwidth\linewidth}
\begin{tikzpicture}
\begin{axis}
[
	width=\linewidth,
	x dir=reverse,
	xmode=log,
	ymode=log,
	xmax  = 12,
	xmin  = 0.03,
	#1
]
}
{
\end{axis}
\end{tikzpicture}
\end{minipage}
}
\newsavebox{\plottedpagetitle}
\newenvironment{plotpagetitle}[2]
{
\sbox{\plottedpagetitle}{#2}
\begin{minipage}{\figwidth\linewidth}
\begin{tikzpicture}
\begin{axis}
[
	width=\linewidth,
	x dir=reverse,
	xmode=log,
	ymode=log,
	xmax  = 12,
	xmin  = 0.03,
	#1
]
}
{
\end{axis}
\end{tikzpicture}
\subcaption{\usebox{\plottedpagetitle}}
\end{minipage}
}
\newcommand{\plotRE}[1]
{
\plot[dashed,red,mark=square*] table
[
	x expr=20/\thisrow{N},
	y expr=\thisrow{R}/(\thisrow{err_rcst_t}^2+\thisrow{err_rcst_g}^2)^0.5)
] {#1}
}
\newcommand{\plotME}[1]
{
\plot[dotted,blue,mark=triangle*] table
[
	x expr=20/\thisrow{N},
	y expr=(\thisrow{mis_tg}/\thisrow{rho}/(\thisrow{err_rcst_t}^2+\thisrow{err_rcst_g}^2)^0.5)
] {#1}
}
\newcommand{\plotMRE}[1]
{
\plot[solid,black,mark=*] table
[
	x expr=20/\thisrow{N},
	y expr=(\thisrow{R}^2+(20/\thisrow{rho}^2)*\thisrow{mis_tg}^2)^0.5
	      /(\thisrow{err_rcst_t}^2+\thisrow{err_rcst_g}^2)^0.5)
] {#1}
}
\newcommand{\plotCFLE}[2]
{
\plot[#2] table
[
	x expr=20/\thisrow{N},
	y expr=(\thisrow{err_rcst_t}^2+\thisrow{err_rcst_g}^2)^0.5)
] {#1}
}
\newcommand{\plotCFLMRE}[2]
{
\plot[#2] table
[
	x expr=20/\thisrow{N},
	y expr=(\thisrow{R}^2+(20/\thisrow{rho}^2)*\thisrow{mis_tg}^2)^0.5
	      /(\thisrow{err_rcst_t}^2+\thisrow{err_rcst_g}^2)^0.5)
] {#1}
}
\newcommand{\plotE}[1]
{
\plot[solid,black,mark=*] table
[
	x expr=20/\thisrow{N},
	y expr=(\thisrow{err_rcst_t}^2+\thisrow{err_rcst_g}^2)^0.5)
] {#1}
}
\newcommand{\plotM}[1]
{
\plot[dotted,blue,mark=triangle*] table
[
	x expr=20/\thisrow{N},
	y expr=\thisrow{mis_tg}/\thisrow{rho}
]
{#1}
}
\newcommand{\plotR}[1]
{
\plot[dashed,red,mark=square*] table
[
	x expr=20/\thisrow{N},
	y=R
]
{#1}
}
\newcommand{\plotF}[1]
{
\plot[solid,black,mark=o] table
[
	x expr=20/\thisrow{N},
	y=etaf
] {#1}
}
\newcommand{\ploteUv}[1]
{
\plot[solid,black,mark=*] table [x expr=20/\thisrow{N},y=err_node_v] {#1}
}
\newcommand{\ploteuv}[1]
{
\plot[dashed,red,mark=*] table [x expr=20/\thisrow{N},y=err_poly_v] {#1}
}
\newcommand{\plotewv}[1]
{
\plot[dotted,blue,mark=triangle*] table [x expr=20/\thisrow{N},y=err_rcst_v] {#1}
}
\newcommand{\ploteUx}[1]
{
\plot[solid,black,mark=square*] table [x expr=20/\thisrow{N},y=err_node_g] {#1}
}
\newcommand{\ploteux}[1]
{
\plot[dashed,red,mark=*] table [x expr=20/\thisrow{N},y=err_poly_g] {#1}
}
\newcommand{\plotewx}[1]
{
\plot[dotted,blue,mark=triangle*] table [x expr=20/\thisrow{N},y=err_rcst_g] {#1}
}
\newcommand{\ploteUt}[1]
{
\plot[solid,black,mark=square*] table [x expr=20/\thisrow{N},y=err_node_t] {#1}
}
\newcommand{\ploteut}[1]
{
\plot[dashed,red,mark=*] table [x expr=20/\thisrow{N},y=err_poly_t] {#1}
}
\newcommand{\plotewt}[1]
{
\plot[dotted,blue,mark=triangle*] table [x expr=20/\thisrow{N},y=err_rcst_t] {#1}
}
\newcommand{\effplot}[1]
{
\plotRE{#1}  node[pos=0.1,pin=90:{$R$}] {};
\plotME{#1}  node[pos=0.1,pin=0:{$M$}] {};
\plotMRE{#1} node[pos=0.5,pin=90:{$\eta$}] {};
}
\newcommand{\cvplot}[1]
{
\plotE{#1} node[pos=0.1,pin=90:{$\CE$}] {};
\plotM{#1} node[pos=0.1,pin=90:{$M$}] {};
\plotR{#1} node[pos=0.1,pin=90:{$R$}] {};
\plotF{#1} node[pos=0.1,pin=90:{$\eta_f$}] {};
}
\newcommand{\effminipage}[1]
{
\begin{minipage}{\figwidth\linewidth}
\begin{tikzpicture}
\begin{axis}
[
	width = \textwidth,
	x dir = reverse,
	xmode = log,
	ymode = log
]

\effplot{#1}

\end{axis}
\end{tikzpicture}
\end{minipage}}
\newcommand{\cvminipage}[1]
{
\begin{minipage}{\figwidth\linewidth}
\begin{tikzpicture}
\begin{axis}
[
	width = \textwidth,
	x dir = reverse,
	xmode = log,
	ymode = log,
	xmax  = 12,
	xmin  = 0.03
]

\cvplot{#1}

\end{axis}
\end{tikzpicture}
\end{minipage}}
\newcommand{\SlopeTriangle}[6]
{

    \pgfplotsextra
    {
        \pgfkeysgetvalue{/pgfplots/xmin}{\xmin}
        \pgfkeysgetvalue{/pgfplots/xmax}{\xmax}
        \pgfkeysgetvalue{/pgfplots/ymin}{\ymin}
        \pgfkeysgetvalue{/pgfplots/ymax}{\ymax}

        \pgfmathsetmacro{\xArel}{#1}
        \pgfmathsetmacro{\yArel}{#3}
        \pgfmathsetmacro{\xBrel}{#1-#2}
        \pgfmathsetmacro{\yBrel}{\yArel}
        \pgfmathsetmacro{\xCrel}{\xArel}

        \pgfmathsetmacro{\lnxB}{\xmin*(1-(#1-#2))+\xmax*(#1-#2)} 
        \pgfmathsetmacro{\lnxA}{\xmin*(1-#1)+\xmax*#1} 
        \pgfmathsetmacro{\lnyA}{\ymin*(1-#3)+\ymax*#3} 
        \pgfmathsetmacro{\lnyC}{\lnyA+#4*(\lnxA-\lnxB)}
        \pgfmathsetmacro{\yCrel}{\lnyC-\ymin)/(\ymax-\ymin)} 

        \coordinate (A) at (rel axis cs:\xArel,\yArel);
        \coordinate (B) at (rel axis cs:\xBrel,\yBrel);
        \coordinate (C) at (rel axis cs:\xCrel,\yCrel);

        \draw[#6]   (A)-- node[anchor=north] {#5}
                    (B)--
                    (C)--
                    cycle;
    }
}
\newtheorem{theorem}{Theorem}
\newtheorem{lemma}[theorem]{Lemma}
\newtheorem{corollary}[theorem]{Corollary}
\newtheorem{remark}[theorem]{Remark}
\numberwithin{equation}{section}
\numberwithin{theorem}{section}
\newcommand{\figwidth}{.45}
\newcommand{\figwidth}{.35}
\newcommand{\rev}[1]{#1}
\newcommand{\rev}[1]{{\color{red} #1}}
\begin{document}

\ifpreprint
\title{Damped energy-norm a posteriori error estimates using $C^2$-reconstructions for the fully discrete wave equation \rev{with the leapfrog scheme} \footnotemark[1]}

\author{T. Chaumont-Frelet\footnotemark[2], A. Ern\footnotemark[3]}

\footnotetext[2]{Inria Univ. Lille and Laboratoire Paul Painlev\'e, 59655 Villeneuve-d'Ascq, France}
\footnotetext[3]{CERMICS, Ecole nationale des ponts et chauss\'ees, IP Paris, 6 \& 8 avenue B.~Pascal, 77455 Marne-la-Vall\'{e}e, France and Inria, 48 Rue Barrault, 75647 Paris, France}
\footnotetext[1]{This work was supported by the ANR JCJC project APOWA (research grant ANR-23-CE40-0019-01).}

\date{\today}

\maketitle

\begin{abstract}
We derive a posteriori error estimates for the the scalar wave equation discretized
in space by continuous finite elements and in time by the explicit leapfrog scheme.
Our analysis combines the idea of invoking extra time-regularity for the right-hand
side, as previously introduced in the space semi-discrete setting, with a novel, 
piecewise quartic, globally twice-differentiable time-reconstruction of the fully
discrete solution. Our main results show that the proposed estimator is reliable and
efficient in a damped energy norm. These properties are illustrated in a series of
numerical examples.

\vspace{.5cm}
\noindent
{\bf Keywords:}
a posteriori error estimates,
finite element method,
leapfrog scheme,
time-integration,
wave equation
\end{abstract}
\else
\title{Damped energy-norm a posteriori error estimates using $C^2$-reconstructions for the fully discrete wave equation \rev{with the leapfrog scheme}}

\author{T. Chaumont-Frelet}\address{Inria Univ. Lille and Laboratoire Paul Painlev\'e, 59655 Villeneuve-d'Ascq, France}
\author{A. Ern}\address{CERMICS, Ecole nationale des ponts et chauss\'ees, IP Paris, 6 \& 8 avenue B.~Pascal, 77455 Marne-la-Vall\'{e}e, France and Inria, 48 Rue Barrault, 75647 Paris, France}

\thanks{This work was supported by the ANR JCJC project APOWA (research grant ANR-23-CE40-0019-01).}

\subjclass{35L05; 65M15; 65M20; 65M60}

\keywords{%
a posteriori error estimates,
finite element method,
leapfrog scheme,
time-integration,
wave equation}

\begin{abstract}
We derive a posteriori error estimates for the the scalar wave equation discretized
in space by continuous finite elements and in time by the explicit leapfrog scheme.
Our analysis combines the idea of invoking extra time-regularity for the right-hand
side, as previously introduced in the space semi-discrete setting, with a novel, 
piecewise quartic, globally twice-differentiable time-reconstruction of the fully
discrete solution. Our main results show that the proposed estimator is reliable and
efficient in a damped energy norm. These properties are illustrated in a series of
numerical examples.
\end{abstract}

\maketitle
\fi

\section{Introduction}

Given an open, bounded, Lipschitz polyhedron $\Omega\subset \Real^d$, $d\ge1$, 
with boundary $\partial\Omega$,
the time interval $J:=[0,+\infty)$, and a source 
term $f:J\times \Omega\to \Real$, the scalar wave equation consists in finding 
$u:J\times \Omega\to \Real$ such that
\begin{subequations}
\label{eq:BVP}
\begin{alignat}{2}
&\ddot u - \Delta u = f&\quad&\text{in $J\times \Omega$}, \label{eq:PDE} \\
&u=0&\quad&\text{on $J\times \partial\Omega$}, \label{eq:BC} \\
&u|_{t=0}=\dot u|_{t=0}=0&\quad&\text{in $\Omega$}. \label{eq:IC}
\end{alignat}
\end{subequations}
The homogeneous Dirichlet condition \eqref{eq:BC} is considered 
for the sake of simplicity, and non-homogeneous coefficients in space could be
considered in \eqref{eq:PDE}. Instead, the zero initial conditions 
\eqref{eq:IC} play a role in our analysis. Moreover, the source term $f$ is 
assumed to be smooth in time and supported away from zero.
Notice that the time-smoothness assumption on $f$ does not preclude 
dealing with minimal space-regularity in domains generating
corner or edge singularities.

The model problem \eqref{eq:BVP} is of relevance in
many engineering applications, so that its numerical discretization
has been extensively developed and analyzed. 
Here, we shall focus on the method of lines in its simplest form, 
where continuous finite elements are employed for the space discretization,
combined with the (explicit) leapfrog scheme as time-marching scheme. This is one of the
most frequently used methods to discretize~\eqref{eq:BVP} owing to its 
computational efficiency with appropriate mass lumping techniques.
Recall, in particular, that stability of explicit
time-integrators for the wave equation is typically achieved under
a mild CFL condition of the form $\tau \lesssim h$,
which is often required for accuracy reasons anyway 
(here, $h$ denotes the mesh size and $\tau$ the time step).

In this work, we are interested in rigorously estimating the discretization error
using an a posteriori error estimator. We aim at deriving both upper and lower bounds
on the error by the estimator, i.e., reliability and efficiency properties.
Perhaps surprisingly, only few works address this question in the literature,
as compared to the vast number of references dealing with elliptic and parabolic 
problems (see, e.g.,
\cite{ern_smears_vohralik_2017a,
ern_vohralik_2015a,
makridakis_nochetto_2003a,
verfurth_2003a,
verfurth_2013a}
and the references therein).
\rev{One important challenge} for deriving reliable
and efficient a posteriori error estimates for the wave equation
is the lack of an inf-sup stability framework
in natural norms \cite[Theorem 4.2.23]{zank_2019a}.
Advances towards inf-sup stable variational formulations of
\eqref{eq:BVP} have been recently reported in
\cite{bignardi_moiola_2023a,fuhrer_gonzalez_karkulik_2023a,zank_2019a}. 
Moreover, in the context of boundary integral equations,
we refer the reader to \cite{hoonhout_loscher_steinbach_urzuatorres_2023a} for a
least-squares approach, and to \cite{gimperlein_ozdermir_stark_stephan_2020a}
for residual a posteriori error estimates.
However, all the above formulations lead to fully coupled space-time discretizations,
whereas, in the present work, our objective is instead to cover the method of lines.

The a posteriori error analysis of the wave equation discretized with
the method of lines has been previously addressed in
\cite{bernardi_suli_2005a,%
georgoulis_lakkis_makridakis_2013a,%
georgoulis_lakkis_makridakis_virtanen_2016a,%
gorynina_lozinski_picasso_2019a}. On the one hand, an implicit time discretization is
considered in
\cite{bernardi_suli_2005a,%
georgoulis_lakkis_makridakis_2013a,%
gorynina_lozinski_picasso_2019a}, either using a second-order backward differentiation
formula for the second-order time derivative or a Newmark-type scheme. The error measure
is the $H^1(J;H^{-1}(\Omega))\cap L^2(J;L^2(\Omega))$-norm in \cite{bernardi_suli_2005a},
the $L^\infty(J;L^2(\Omega))$-norm in \cite{georgoulis_lakkis_makridakis_2013a}, and the
energy-norm ($H^1(J;L^2(\Omega))\cap L^2(J;H^1_0(\Omega))$) in
\cite{gorynina_lozinski_picasso_2019a}. Among these three works,
only \cite{bernardi_suli_2005a} also derives error lower bounds, but the efficiency result
is somewhat polluted by the presence of additional terms involving the error energy-norm. On
the other hand, the (explicit) leapfrog scheme is considered in
\cite{georgoulis_lakkis_makridakis_virtanen_2016a}\rev{, but only 
in a time semi-discrete setting. 
However, for the wave equation, time semi-discrete schemes cannot propagate compactly 
supported initial data beyond the initial support, and explicit time semi-discrete 
solutions must sit in the domain of iterated powers of the Laplacian operator.} 

The above discussion clearly indicates that a gap remains in the literature concerning the 
a posteriori error analysis of the fully discrete wave equation using the (explicit) leapfrog
scheme. The goal of the present work is to partly fill this gap. To this purpose, we rely on the
approach recently introduced in \cite{Theo:23} in the space semi-discrete setting to bypass the
lack of inf-sup stability of the wave equation. 
There are two key ideas in \cite{Theo:23}. The first one 
is to (abstractly) work with the Laplace transform and distinguish 
low- and high-frequency components of the error. The former can be controlled 
by invoking a duality argument, and the latter by invoking the time smoothness 
of the source term $f$ \rev{(the fact that extra time-regularity is required
is somehow related to the lack of inf-sup stability)}.
The second idea is to bound the space semi-discrete error, $e$, using 
the following damped energy-norm:
\begin{equation} \label{eq:damped_energy}
\calE_\rho^2(e) := 
\int_0^{+\infty} \Big\{ \|\dot e(t)\|_\Omega^2 + \|\nabla e(t)\|_\Omega^2 \Big\} e^{-2\rho t}dt,
\end{equation} 
where the damping parameter $\rho>0$ scales as the reciprocal of a time and can be
chosen as small as desired. In practice, one is typically interested in the solution
up to some finite time-horizon $T_*$, and one then sets $\rho$ to be, e.g., the reciprocal of 
some multiple of $T_*$. \rev{Two important remarks are in order. First, assuming that
the source term satisfies $f\in L^2(J;L^2(\Omega))$, classical arguments (briefly recalled 
in Remark~\ref{rem:bnd_engy_u}) show that
the undamped energy of the exact solution, $\polE_u(t) := \frac12\|\dot u(t)\|_\Omega^2
+ \frac12 \|\nabla u(t)\|_\Omega^2$, grows at most linearly with $t$. Under the slightly 
tighter assumption $f\in H^1(J;L^2(\Omega))$ and the above CFL restriction on the time step, 
we verify in Remark~\ref{rem:bnd_engy_wht}
that the same property holds true for the undamped energy of the (time reconstructed) 
discrete solution (see below for its precise definition). 
This justifies neglecting the tail of the time integral 
in~\eqref{eq:damped_energy} in practical computations. 
Second, although the leapfrog scheme enjoys uniform-in-time
stability, this does not lead to uniform-in-time error estimates that decay to zero with the 
mesh size (even in the classical a priori error analysis based on undamped energy arguments). 
Thus, the introduction of a time horizon, here by means of the damping parameter $\rho$ in~\eqref{eq:damped_energy}, is a natural ingredient in the a priori and a posteriori error analysis.}

The main step forward accomplished herein is to extend \cite{Theo:23}
to the fully discrete setting \rev{using the leapfrog scheme in time
with a constant time step and a fixed space discretization}. 
This step is by no means straightforward. The key idea is 
to introduce suitable time-reconstructed functions from the sequence of fully discrete 
solutions produced by the leapfrog scheme at the discrete time nodes. One crucial difficulty
is that reformulating the leapfrog scheme in a time-functional setting requires
introducing two time reconstructions, one which is piecewise quadratic in time and 
globally of class $C^0$, say $\uht$, and the other which is piecewise quartic in time 
and globally of class $C^2$, say $\wht$. The idea of introducing a time reconstruction
from values produced at the discrete time nodes by a time-marching scheme
is already known in the context of a posteriori error analysis 
\cite{AkMaN:09}. In the context of the wave equation for instance, a piecewise cubic 
reconstruction, which is globally of class $C^1$ in time, is introduced in 
\cite{georgoulis_lakkis_makridakis_2013a}. However, the $C^2$ time reconstruction
introduced herein seems, to our knowledge, a novel idea in the analysis
of the wave equation.

Using the above two reconstructions $\uht$ and $\wht$, the leapfrog scheme
can be rewritten as follows: For all $t\in J$ and all $v_h\in V_h$, 
\begin{equation} \label{eq:leapfrog_intro}
(\ddwht(t),v_h)_\Omega + (\nabla \uht(t),\nabla v_h)_\Omega = (f_\tau(t),v_h)_\Omega.
\end{equation}
Here, $V_h\subset V:=H^1_0(\Omega)$ is a finite element space and $f_\tau$ a suitable
approximation in time of the source term $f$ (precise notation is introduced 
in Section~\ref{section_cont_disc_setting}).
Defining the fully discrete error 
\begin{equation} \label{eq:def_e}
e:=u-\wht,
\end{equation} 
the main consequence of~\eqref{eq:leapfrog_intro}
is that the error equation takes the following form: For all $t\in J$ and all $v\in V$,
\begin{equation} \label{eq:error_eq_intro}
(\ddot e(t),v)_\Omega + (\nabla e(t),\nabla v)_\Omega = (\eta_f(t),v)_\Omega + (\nabla \deltaht(t),\nabla v)_\Omega,
\end{equation}
where $\eta_f:=f-f_\tau$ represents a data oscillation term 
(which can be shown to decay at higher order in time than the error itself) and
\begin{equation} \label{eq:def_delta_intro}
\deltaht := \uht - \wht.
\end{equation}
The last term on the right-hand side of~\eqref{eq:error_eq_intro}
leads to a lack of Galerkin orthogonality in the 
error equation. The consequence of this fact is that norms of 
$\deltaht$ appear in both upper and lower bounds on the error. Notice 
that $\deltaht$ is fully computable and can be viewed as 
a time discretization error estimator. As confirmed
by our numerical experiments, the contribution of $\deltaht$ to the a posteriori error
estimator can be made small by decreasing the time step. We also emphasize that the lack
of Galerkin orthogonality is also present when dealing with the heat equation discretized
using continuous finite elements and an implicit time-scheme (e.g., backward Euler),
as already highlighted in \cite{ern_smears_vohralik_2017a}.

Let us briefly outline our main results. Let $\calE_\rho^{2}(e)$ 
be the damped energy norm of the fully discrete error $e=u-\wht$ 
(see~\eqref{eq:damped_energy}. Our main result in Theorem \ref{thm:upper}
below states that, up to higher-order terms (h.o.t.) \rev{as the mesh size and the time step tend to zero},
\begin{equation} \label{eq:outline_thm12}
\calE_\rho^2(e)
\leq
\int_{0}^{+\infty} \Big \{
\eta_h^2(t) + \frac{\rev{20}}{\rho^2}\|\nabla \ddeltaht(t)\|_\Omega^2
\Big \}
e^{-2\rho t} dt
+
\text{h.o.t.}
\end{equation}
with
\begin{equation}
\eta_h(t) := \| \ddwht(t) - \Delta \rev{\wht(t)} - f_\tau(t)\|_{H^{-1}(\Omega)}.
\end{equation}
Notice that the \rev{the right-hand side of~\eqref{eq:outline_thm12}}
is the sum of two terms, which
may be linked, respectively, to the space and the time discretization errors\rev{, plus
higher-order terms further discussed in Remark~\ref{rem:hot_up}. We observe that 
all the higher-order terms are either explicitly computable or admit a computable upper bound 
(which is not of higher-order, but may be useful to certify the error)}. Moreover, 
since $\ddwht - \Delta \uht - f_\tau$ enjoys a Galerkin orthogonality property
at all times $t\in J$
owing to~\eqref{eq:leapfrog_intro}, the a posteriori estimator $\eta_h(t)$ can be
bounded \rev{by a triangle inequality producing the term $\deltaht$ and}
any technique available in the elliptic context, e.g., of residual-type
or based on flux equilibration. 
A converse bound on $\eta_h$ is established in Theorem \ref{thm:lower} under
a CFL constraint on the time step, namely (up to a constant independent of the
mesh size $h$, time step $\tau$, and damping parameter $\rho$)
\begin{equation} \label{eq:low_intro}
\int_{0}^{+\infty} \eta_h^2(t) e^{-2\rho t} dt
\lesssim
\calE_{\rho}^2(e) +
\int_{0}^{+\infty} \|\nabla \deltaht(t)\|^2 e^{-2\rho t} dt
+ \text{h.o.t.},
\end{equation}
\rev{with higher-order terms further discussed in Remark~\ref{rem:hot_low}}.
Thus, if the term
\begin{equation}
\calD_{\rho}^2 := \int_{0}^{+\infty} \Big \{
\|\nabla \deltaht(t)\|^2 + \frac{1}{\rho^2}\|\nabla \ddeltaht(t)\|_\Omega^2 \Big\} 
e^{-2\rho t} dt
\end{equation}
is added to the error measure and to the estimator by setting
$\widetilde \calE_\rho^2(e) := \calE_\rho^2(e) + \calD_\rho^2$ and
$\widetilde \Lambda_\rho^2 := \int_{0}^{+\infty} \eta_h^2(t) e^{-2\rho t} dt + \calD_\rho^2$, 
one obtains the following reliability and efficiency result:
\begin{equation} \label{eq:equiv_intro}
\widetilde \Lambda_{\rho}^2 - \text{h.o.t.}
\lesssim
\widetilde \calE_{\rho}^2(e)
\lesssim
\widetilde \Lambda_{\rho}^2 + \text{h.o.t.}
\end{equation}
Incorporating a fully computable term like $\calD_\rho$ in the error 
and the estimator is standard in other
contexts, such as discontinuous Galerkin discretizations of elliptic problems
\cite{KarPa:03}. \rev{Notice that we prove in Lemma~\ref{lem:bnd_eta} below
that $\calD_\rho$ decreases optimally in time (i.e., as $\tau^2$).}
In our numerical experiments on the wave equation,
we observe that
\rev{the right-hand side of~\eqref{eq:outline_thm12}} 
controls $\calE_{\rho}$ by a factor of at most $10$.

\rev{We notice that the present limitation of a constant time step and a fixed space
discretization is not easy to lift. This is not specific to the present a posteriori
analysis since the obstruction already arises in the a priori error analysis. For instance,
stability issues may arise if the time step varies, as discussed in \cite{Skeel:93}.
Moreover, mesh changes can, in principle, be taken into account in the error upper 
bound by including in the estimator the energy variations induced by the mesh changes,
but the question of incorporating these changes in an error lower bound remains open.}

The remainder of the paper is organized as follows. We make the continuous and
discrete settings precise in Sections \ref{section_cont_disc_setting}. 
We present the time reconstructions
in Section \ref{section_time_reconstructions}, where we also investigate their accuracy.
Sections \ref{section_upper_bound} and \ref{section_lower_bound} are, respectively,
dedicated to establishing the upper and lower bounds on the error. 
We present numerical examples in Section \ref{section_numerical_results}.
In \rev{Section}~\ref{section_stability_leapfrog}, we derive some stability results on
the leapfrog scheme in the damped energy norm. These results are useful
in establishing the error lower bounds, but we believe they are of independent interest.
\rev{For completeness}, we \rev{also} derive an a priori error
estimate on the exact solution in the damped energy norm\rev{. For a priori
error estimates in the classical energy norm, we refer the reader, e.g., to
\cite{Joly:03} and the references therein.}

\section{Continuous and discrete settings}
\label{section_cont_disc_setting}

In this section, we present the continuous problem and state its basic stability properties
in the damped energy norm. Next, we present the discrete setting based on continuous
finite elements for the space discretization and the leapfrog scheme for the time discretization.
Finally, we recall the classical CFL stability condition for the leapfrog scheme,
and we introduce an approximation factor quantifying how well the finite element space
approximates some dual solution which is used to establish the error upper bound.

\subsection{Time and frequency domains}

We use standard notation for the Lebesgue, Sobolev, and Bochner--Sobolev spaces.
In particular, $(\cdot,\cdot)_\Omega$ denotes the inner product in $L^2(\Omega)$
and $\|\cdot\|_\Omega$ the corresponding norm, and we employ the same notation
for vector-valued functions with all components in $L^2(\Omega)$. 
For any Banach space $Y$ composed of functions defined over $\Omega$, 
and for any integer $r\ge0$, we set $C^r_{\rm b}(J;Y) := \{v\in C^r(J;Y) \st v^{(r)}\in L^\infty(J;Y)\}$
and $C^2_{0,}(J;Y):=\{v\in C^2(J;Y) \st v(0)=\dot v(0)=0\}$. 
Moreover, for every function $\phi \in H^1(J;L^2(\Omega))\cap L^2(J;V)$
with $V:=H^1_0(\Omega)$, we define its damped energy norm as
\begin{equation} \label{eq:def_calE_rho}
\calE_\rho^2(\phi) := \int_0^{+\infty} \Big\{ \|\dot \phi(t)\|_\Omega^2 + \|\nabla \phi(t)\|_\Omega^2 \Big\}
e^{-2\rho t}dt.
\end{equation}

We assume that the source term satisfies $f\in \rev{H^1(J;L^2(\Omega)) \cap L^\infty(J;L^2(\Omega))}$ \rev{so that, in particular, $f\in C^0_{\rm b}(J;L^2(\Omega))$.} Moreover, we assume that 
$f$ is supported away from zero.
It is then natural to seek for a strong solution
$u\in C^2_{0,}(J;L^2(\Omega)) \cap C^0(J;V)$ such that
\begin{equation} \label{eq:PDE_weak}
(\ddot u(t),v)_\Omega + (\nabla u(t),\nabla v)_\Omega = (f(t),v)_\Omega,
\quad \forall t\in J, \; \forall v\in V.
\end{equation}
Notice that the boundary condition \eqref{eq:BC} is encoded in the fact that $u\in C^0(J;V)$,
and the initial conditions \eqref{eq:IC} are encoded in the fact that
$u\in C^2_{0,}(J;L^2(\Omega))$.

A convenient way to handle the damped energy-norm introduced in~\eqref{eq:damped_energy}
is to work in the frequency domain. To this purpose, we
consider the Laplace transform
\begin{equation}
\hv(s) := \int_0^{+\infty} v(t)e^{-st}dt,
\qquad s:=\rho + \sfi \nu \in \polC, \; \Re(s):=\rho>0, \; \Im(s):=\nu, 
\end{equation} 
for any function $v\in L^\infty(J;Y)$ so that the integral is properly defined.
We observe that the damping parameter $\rho>0$ introduced in the error measure determines 
the real part of the complex frequency $s$. A key property of the Laplace transform is that
\begin{equation} \label{eq:identity}
\int_0^{+\infty} \|v(t)\|_Y^2 e^{-2\rho t}dt
=
\int_{\rho-\sfi\infty}^{\rho+\sfi\infty} \|\hv(s)\|_Y^2 ds.
\end{equation}

For all $s \in \polC$, we define the sesquilinear form $b_s$ on $\hV\times \hV$
with $\hV:=H^1_0(\Omega;\polC)$ such that
\begin{equation} \label{eq:def_b}
b_s(\hphi,\hv) := s^2(\hphi,\hv)_\Omega + (\nabla \hphi,\nabla \hv)_\Omega.
\end{equation}
In the frequency domain, \eqref{eq:PDE_weak} can be rewritten as
\begin{equation} \label{eq:PDE_freq}
b_s(\hu(s),\hv) = (\hf(s),\hv)_\Omega \quad \forall s\in\polC, \; \forall \hv\in\hV.
\end{equation}

\subsection{A priori estimates on the exact solution}

Defining the ``frequency-domain energy norm'' as
\begin{equation}
\tnorm{\hv}^2 := |s|^2 \|\hv\|_\Omega^2 + \|\nabla \hv\|_\Omega^2 \quad \forall \hv\in \hV,
\end{equation}
the key stability property of the sesquilinear form $b_s$ defined in~\eqref{eq:def_b} is
\begin{equation} \label{eq:stability}
\rho \tnorm{\hv}^2 = \Re\big( b_s(\hv,s\hv) \big) \quad \forall \hv\in \hV.
\end{equation}

\begin{lemma}[A priori estimate] \label{lem:a_priori}
Assume that $u \in C^1_{\rm b}(J;L^2(\Omega))\cap C^0_{\rm b}(J;V)$.
The following holds:
\begin{equation} \label{eq:key_stab1}
\calE_\rho^2(u) \le \frac{1}{\rho^2} \int_0^{+\infty} \|f(t)\|_\Omega^2 e^{-2\rho t}dt.
\end{equation}
\end{lemma}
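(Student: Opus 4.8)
The plan is to use the frequency-domain reformulation together with the stability identity~\eqref{eq:stability}. The starting point is the Laplace-transformed equation~\eqref{eq:PDE_freq}, namely $b_s(\hu(s),\hv) = (\hf(s),\hv)_\Omega$ for all $\hv\in\hV$. First I would test this identity with $\hv := s\,\hu(s)$, so that the left-hand side becomes $b_s(\hu(s),s\hu(s))$. Taking real parts and invoking~\eqref{eq:stability} gives
\begin{equation*}
\rho\,\tnorm{\hu(s)}^2 = \Re\big( (\hf(s),s\hu(s))_\Omega \big).
\end{equation*}
The right-hand side is bounded, by Cauchy--Schwarz, by $\|\hf(s)\|_\Omega\,|s|\,\|\hu(s)\|_\Omega \le \|\hf(s)\|_\Omega\,\tnorm{\hu(s)}$, since $|s|\,\|\hu(s)\|_\Omega \le \tnorm{\hu(s)}$ by the definition of the frequency-domain energy norm. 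Dividing by $\tnorm{\hu(s)}$ (the case $\tnorm{\hu(s)}=0$ being trivial) yields the pointwise-in-frequency bound
\begin{equation*}
\rho\,\tnorm{\hu(s)} \le \|\hf(s)\|_\Omega,
\qquad\text{i.e.,}\qquad
\rho^2\,\tnorm{\hu(s)}^2 \le \|\hf(s)\|_\Omega^2,
\end{equation*}
valid for every $s = \rho + \sfi\nu$ on the vertical line $\Re(s)=\rho$.

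Next I would integrate this inequality over the Laplace contour $s\in(\rho-\sfi\infty,\rho+\sfi\infty)$ and use the Plancherel-type identity~\eqref{eq:identity}. Applied to $v=\dot u$ and to $v=\nabla u$ (componentwise), identity~\eqref{eq:identity} gives $\int_0^{+\infty}\|\dot u(t)\|_\Omega^2 e^{-2\rho t}dt = \int \|\widehat{\dot u}(s)\|_\Omega^2\,ds$ and likewise for $\nabla u$; since $\widehat{\dot u}(s) = s\,\hu(s)$ (using the zero initial conditions $u(0)=\dot u(0)=0$ encoded in $u\in C^2_{0,}(J;L^2(\Omega))$, which justify the boundary terms vanishing in the integration by parts defining the Laplace transform of a derivative), the integrand on the frequency side is exactly $|s|^2\|\hu(s)\|_\Omega^2 + \|\nabla\hu(s)\|_\Omega^2 = \tnorm{\hu(s)}^2$. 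Hence
\begin{equation*}
\calE_\rho^2(u) = \int_{\rho-\sfi\infty}^{\rho+\sfi\infty} \tnorm{\hu(s)}^2\,ds
\le \frac{1}{\rho^2}\int_{\rho-\sfi\infty}^{\rho+\sfi\infty} \|\hf(s)\|_\Omega^2\,ds
= \frac{1}{\rho^2}\int_0^{+\infty} \|f(t)\|_\Omega^2 e^{-2\rho t}\,dt,
\end{equation*}
using~\eqref{eq:identity} once more in the last step. This is precisely~\eqref{eq:key_stab1}.

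The main technical point to be careful about is the justification of the manipulations with the Laplace transform: one needs the transforms $\hu(s)$, $\hf(s)$ to be well defined (guaranteed by $u\in C^1_{\rm b}(J;L^2(\Omega))\cap C^0_{\rm b}(J;V)$ and $f\in L^\infty(J;L^2(\Omega))$, so that the defining integrals converge for $\Re(s)=\rho>0$), and the identity $\widehat{\dot u}(s)=s\hu(s)$ together with the Plancherel identity~\eqref{eq:identity} requires the zero initial conditions and enough integrability. All of this is standard, so I do not expect a genuine obstacle here; the only subtlety worth a line in the proof is checking that the energy of $u$ is finite so that $\calE_\rho^2(u)<\infty$ a priori — but this also follows from the assumed boundedness $u\in C^1_{\rm b}(J;L^2(\Omega))\cap C^0_{\rm b}(J;V)$, since then the integrand in~\eqref{eq:def_calE_rho} is bounded uniformly in $t$ and the exponential weight makes the integral converge.
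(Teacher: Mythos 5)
Your proposal is correct and follows essentially the same route as the paper's proof: test the Laplace-transformed equation with $s\hu(s)$, use the stability identity~\eqref{eq:stability} and Cauchy--Schwarz to get $\tnorm{\hu(s)}\le \rho^{-1}\|\hf(s)\|_\Omega$, then conclude via $\widehat{\dot u}(s)=s\hu(s)$ and the Plancherel-type identity~\eqref{eq:identity}. Your additional remarks on justifying the transform manipulations and the vanishing initial conditions are sensible but do not change the argument.
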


\begin{proof}
Owing to the stability property~\eqref{eq:stability}, we infer that, for all $s\in\polC$,
\[
\rho \tnorm{\hu(s)}^2 = \Re\big( b_s(\hu(s),s\hu(s))\big)
= \Re (\hf(s),s\hu(s))_\Omega \le \|\hf(s)\|_\Omega \tnorm{\hu(s)}.
\]
Hence, we have
\begin{equation} \label{eq:key_stab_freq}
\tnorm{\hu(s)} \le \frac{1}{\rho} \|\hf(s)\|_\Omega \quad \forall s\in\polC.
\end{equation} 
The assertion follows from $s\hu(s)=\hat{\dot u}(s)$ and~\eqref{eq:identity}.
\end{proof} 

\begin{remark}[\rev{Bound on undamped energy}] \label{rem:bnd_engy_u}
\rev{The undamped energy $\polE_u(t) := \frac12\|\dot u(t)\|_\Omega^2
+ \frac12 \|\nabla u(t)\|_\Omega^2$ satisfies the identity $\dot{\polE}_u(t) =
(f(t),\dot u(t))_\Omega$ at all times. Fix any time $t>0$. Then
$\dot{\polE}_u(s) \le \frac{t}{2}\|f(s)\|_\Omega^2 + \frac{1}{t}\polE_u(s)$ 
for all $s\in (0,t)$, and a Gronwall-like
argument readily gives $\polE_u(t) \le \frac{e}{2} t\int_0^t \|f(s)\|_\Omega^2ds$. 
This shows that $\polE_u(t)$ grows at most linearly in $t$ if $f\in L^2(J;L^2(\Omega))$.}
\end{remark}

\subsection{Discrete problem}

The space discretization is realized by means of a conforming finite 
element method (FEM). We assume that $\Omega$ is a polyhedron and
consider an affine simplicial mesh $\calT_h$ that covers $\Omega$ exactly.
The subscript $h$ refers to the mesh size. Let $V_h\subset V$ be the continuous
FEM space built using Lagrange finite elements of degree $k\ge1$. 
The time discretization is realized by considering an increasing sequence of discrete
time nodes $(t^n)_{n\in\polN}$ (conventionally, $0\in\polN$). 
\rev{As motivated in the introduction, we assume a constant time-step $\tau$ so that $t^n=n\tau$ for all $n\in\polN$.}
Recalling that the damping parameter $\rho$ is such that $\rho T\ge 1$, where $T$
is some observation time-scale, we make in what follows the following mild assumption:
\begin{equation}\label{eq:ass_rho_dt}
\rho \tau \le 1.
\end{equation}
The second-order time derivative in the scalar wave equation is discretized by means of the leapfrog (central finite difference) scheme. The fully discrete wave equation consists in finding the sequence $(\sfU^n)_{n\in\polN}\subset V_h^\polN$, i.e., $\sfU^n\in V_h$ for all $n\in\polN$, such that 
\begin{equation} \label{eq:leapfrog}
\frac{1}{\tau^{2}}(\sfUnp-2\sfUn+\sfUnm,v_h)_\Omega 
+ (\nabla \sfUn,\nabla v_h)_\Omega = (f^n,v_h)_\Omega \quad
\forall n\ge1,\; \forall v_h\in V_h,
\end{equation}
with $f^n:=f(t^n)$ and the initial conditions $\sfU^0=\sfU^1=0$. (Recall that $f(0)=0$ by assumption.)

\begin{remark}[Time-horizon]
In practice, one fixes a finite time-horizon $T_{\star}=N\tau$ for some positive integer
$N$ and computes only the first $N$ steps of the scheme \eqref{eq:leapfrog}.
\end{remark}

\subsection{CFL condition}

It is well-known that the leapfrog scheme is conditionally stable under a CFL condition.
This condition is not needed in our analysis to derive the error upper bound, but it is
invoked in the error lower bound. 
To state the CFL restriction, one introduces on $V_h\times V_h$ the bilinear form
\begin{equation} \label{eq:def_mht}
m_{h\tau}(v_h,w_h) := (v_h,w_h)_\Omega - \tau^2 (\nabla v_h,\nabla w_h)_\Omega.
\end{equation}
Then, for all $\mu_0\in (0,1)$, there exists $\tau^*(\mu_0)>0$ such that, whenever 
$\tau \in (0,\tau^*(\mu_0)]$, the following holds:
\begin{equation} \label{eq:mht_L2}
\mu_0 (v_h,w_h)_\Omega \le m_{h\tau}(v_h,w_h) \le (v_h,w_h)_\Omega
\quad \forall v_h,w_h\in V_h.
\end{equation}
(Notice that the second bound is trivial.) One can take $\mu_0=\frac12$ in what follows to fix the ideas. Invoking an inverse inequality, one readily
shows that $\tau^*(\mu_0)\le Ch_{\min}$, where $h_{\min}$ is the smallest diameter of
a mesh cell and the constant $C$ depends on $\mu_0$ and the shape-regularity parameter 
of the mesh $\calT_h$. We leave the dependence on the latter parameter implicit
and write the CFL condition in the form
\begin{equation} \label{eq:CFL}
\tau \le C(\mu_0)h_{\min},
\end{equation} 
for some positive constant $C(\mu_0)$ such that \eqref{eq:mht_L2} holds true for some $\mu_0\in (0,1)$.

\subsection{Approximation factor}
\label{sec:app_factor}

As in \cite{Theo:23}, the derivation of the error upper bound involves a duality
argument. To this purpose, for all $\hg\in L^2(\Omega;\polC)$, we consider
the solution $\hchi_g \in \hV$ to the adjoint problem $b_s(\hw,\hchi_g) = |s|^2
(\hw,\hg)_\Omega$ for all $\hw\in \hV$. We define the approximation factor
\begin{equation} \label{eq:def_gamma_s}
\gamma_s(h) := \sup_{\substack{\hg\in L^2(\Omega;\polC) \\ |s|\|\hg\|_\Omega=1}} \min_{\hv_h\in \hV_h} \|\nabla(\hchi_g-\hv_h)\|_\Omega,
\end{equation}
where $\hV_h$ is the complex-valued version of the finite element space $V_h$ introduced above. 

\begin{lemma}[Bound on approximation factor] \label{lem:bnd_gamma}
The following holds:
\begin{equation} \label{eq:bnd_gamma}
\rev{\gamma}_{s}(h)
\le
\min \bigg\{
\frac{|s|}{\rho},
C_{\mathrm{app}} C_{\mathrm{ell}} h^\theta \ell_\Omega^{1-\theta} |s|
\Big( 1+ \frac{|s|}{\rho} \Big)
\bigg\},
\end{equation}
where $\ell_\Omega:=\operatorname{diam}(\Omega)$ is a global length scale introduced for dimensional consistency,
$C_{\mathrm{app}}$ is related to the approximation properties of the finite element space $\hV_h$, and $\theta\in (\frac12,1]$ and $C_{\mathrm{ell}}$ are related to the elliptic regularity shift in $\Omega$.
\end{lemma}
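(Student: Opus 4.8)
The plan is to bound $\gamma_s(h)$ separately by each of the two entries of the minimum in~\eqref{eq:bnd_gamma}; since $\gamma_s(h)$ is dominated by each, it is dominated by their minimum. Both estimates rest on an a priori bound for the adjoint solution $\hchi_g$, which I obtain by testing its defining relation $b_s(\hw,\hchi_g)=|s|^2(\hw,\hg)_\Omega$ with $\hw=s^{-1}\hchi_g\in\hV$. Since $|s|^2/s=\bar s$, this gives $b_s(s^{-1}\hchi_g,\hchi_g)=\bar s\,(\hchi_g,\hg)_\Omega$, and, applying the stability identity~\eqref{eq:stability} with $\hv=s^{-1}\hchi_g$ (so that $s\hv=\hchi_g$), taking real parts yields $\rho\,|s|^{-2}\tnorm{\hchi_g}^2=\Re\big(\bar s\,(\hchi_g,\hg)_\Omega\big)$. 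Estimating the right-hand side by Cauchy--Schwarz and using $\|\hchi_g\|_\Omega\le|s|^{-1}\tnorm{\hchi_g}$ then gives
\[
\tnorm{\hchi_g}\le\frac{|s|^2}{\rho}\,\|\hg\|_\Omega ,
\]
so that, under the normalization $|s|\,\|\hg\|_\Omega=1$, one has $\|\nabla\hchi_g\|_\Omega\le\tnorm{\hchi_g}\le|s|/\rho$ and $\|\hchi_g\|_\Omega\le|s|^{-1}\tnorm{\hchi_g}\le1/\rho$.

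The first bound is then immediate: choosing $\hv_h=0$ in the minimum defining $\gamma_s(h)$ gives $\min_{\hv_h\in\hV_h}\|\nabla(\hchi_g-\hv_h)\|_\Omega\le\|\nabla\hchi_g\|_\Omega\le|s|/\rho$, and taking the supremum over the admissible $\hg$ yields $\gamma_s(h)\le|s|/\rho$. For the second bound I invoke elliptic regularity. Conjugating the adjoint relation shows that $\hchi_g\in\hV$ solves $\bar s^2\hchi_g-\Delta\hchi_g=|s|^2\hg$ in the distributional sense, whence $\Delta\hchi_g\in L^2(\Omega;\polC)$ with, under the normalization,
\[
\|\Delta\hchi_g\|_\Omega\le|s|^2\big(\|\hg\|_\Omega+\|\hchi_g\|_\Omega\big)\le|s|\Big(1+\frac{|s|}{\rho}\Big),
\]
using $|s|^2\|\hg\|_\Omega=|s|$ and $\|\hchi_g\|_\Omega\le1/\rho$ from the previous step. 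Elliptic regularity in the Lipschitz polyhedron $\Omega$ supplies an exponent $\theta\in(\frac12,1]$ and a constant $C_{\mathrm{ell}}$, both depending only on $\Omega$, such that $\hchi_g\in H^{1+\theta}(\Omega;\polC)$ with $|\hchi_g|_{H^{1+\theta}(\Omega)}\le C_{\mathrm{ell}}\,\ell_\Omega^{1-\theta}\,\|\Delta\hchi_g\|_\Omega$. Since $\theta\le1\le k$, a standard (quasi-)interpolation estimate provides some $\hv_h\in\hV_h$ with $\|\nabla(\hchi_g-\hv_h)\|_\Omega\le C_{\mathrm{app}}\,h^\theta\,|\hchi_g|_{H^{1+\theta}(\Omega)}$. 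Chaining the last three inequalities and taking the supremum over $\hg$ gives the second bound.

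I do not anticipate a genuine obstacle: once the frequency-domain stability~\eqref{eq:stability} is available, the proof reduces to a textbook duality-plus-elliptic-regularity computation. The only points requiring care are the choice of test function $\hw=s^{-1}\hchi_g$, so that the left-hand side of the adjoint relation reproduces exactly the frequency-domain energy norm $\tnorm{\hchi_g}$, and the bookkeeping of the powers of $|s|$ and $\rho$ so that they assemble into the advertised factor $1+|s|/\rho$; the value $\theta>\frac12$ reflects the corner/edge singularities of the polyhedron and is simply quoted as a known regularity result.
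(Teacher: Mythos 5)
Your proposal is correct and takes essentially the same route as the paper: the first bound comes from the frequency-domain stability identity~\eqref{eq:stability} applied to the adjoint solution (your test function $\hw=s^{-1}\hchi_g$ is equivalent to the paper's $\Re\,b_s(\hchi_g,s\hchi_g)$ computation, and yields the same estimate $\tnorm{\hchi_g}\le|s|/\rho$), and the second from writing $\Delta\hchi_g$ via the strong form, invoking the $H^{1+\theta}$ elliptic shift, and applying the finite element approximation estimate. The only differences are cosmetic: you spell out the stability step that the paper calls ``readily shown,'' and you track the $\ell_\Omega$ scaling through the $H^{1+\theta}$ seminorm rather than the full norm, which assembles into the same factor $\ell_\Omega^{1-\theta}$.
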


\begin{proof}
Let $\hg\in L^2(\Omega;\polC)$ be s.t.~$|s|\|\hg\|_\Omega=1$.
Invoking~\eqref{eq:stability} and taking $\hv_h=0$ readily shows that 
\[
\gamma_s(h)\le \|\nabla\hchi_g\|_\Omega \le \tnorm{\hchi_g} \le \frac{|s|}{\rho}.
\]
Moreover, invoking elliptic regularity in Lipschitz polyhedra (see \cite[p.~158]{Dauge:88}),
we infer that there are $\theta\in (\frac12,1]$ and $C_{\mathrm{ell}}>0$ such that
$\|\hchi_g\|_{H^{1+\theta}(\Omega)} \le C_{\mathrm{ell}} \ell_\Omega^2 \|\Delta \hchi_g\|_\Omega$.
Using the approximation properties of the finite element space $\hV_h$ to bound the minimum over
$\hv_h$ in \eqref{eq:def_gamma_s}, this gives
\begin{align*}
\|\nabla(\hchi_g-\hv_h)\|_\Omega & \le C_{\mathrm{app}} h^\theta \ell_\Omega^{-1-\theta}\|\hchi_g\|_{H^{1+\theta}(\Omega)} \le C_{\mathrm{app}} C_{\mathrm{ell}} h^\theta \ell_\Omega^{1-\theta} \|\Delta \hchi_g\|_\Omega  \\
& \le C_{\mathrm{app}} C_{\mathrm{ell}} h^\theta \ell_\Omega^{1-\theta} |s| \big( 1+ \tfrac{|s|}{\rho} \big), 
\end{align*}
where the last bound follows from $\Delta \hchi_g=s^2\hchi_g-|s|^2\hg$, $|s| \|\hg\|_\Omega=1$,
and $|s| \|\hchi_g\|_\Omega \le \tnorm{\hchi_g} \le \frac{|s|}{\rho}$. 
\end{proof}

For any cutoff frequency $\omega>0$ and any damping parameter $\rho>0$, we set
\begin{equation} \label{eq:def_gamma_rho_omega}
\gamma_{\rho,\omega}(h) := \max_{\substack{s=\rho + \sfi\nu\\ |\nu|\le \omega}} \gamma_s(h).
\end{equation}
Bounds on $\gamma_{\rho,\omega}(h)$ are readily derived from Lemma~\ref{lem:bnd_gamma}. 
For instance, $\gamma_{\rho,\omega}(h) \le (1+(\frac{\omega}{\rho})^2)^{\frac12}$. 
More importantly, fixing $\omega>0$ and $\rho>0$, we infer 
that $\gamma_{\rho,\omega}(h)\to0$ as $h\to0$.

\section{Time reconstructions for the leapfrog scheme}
\label{section_time_reconstructions}

In this section, we introduce two time reconstructions defined on sequences in $V_h$. 
The two reconstructions satisfy an important commuting property with the second-order 
(discrete) time derivative and allow us to rewrite the leapfrog scheme in a time-functional setting.

For all $n\in\polN$, we consider the time interval $J_n:=[t^n,t^{n+1})$. Let $Y$ be a Banach space composed of functions defined over $\Omega$; typical examples include $Y=V_h$, $Y=V$ or $Y=L^2(\Omega)$. Given a polynomial degree $\ell\ge0$, we define the following broken polynomial space in time:
\begin{equation}
P^\ell(J_\tau;Y) := \{ \vht \in L^\infty(J;Y) \st \vht|_{J_n} \in \polP^\ell(J_n;Y) \ \forall n\in\polN \},
\end{equation}
where $\polP^\ell(J_n;Y)$ is composed of $Y$-valued time-polynomials of order at most $\ell$ restricted to $J_n$.

\subsection{Definitions and key properties}

Consider a sequence $\sfV:=(\sfV^n)_{n\in\polN}$ in $Y$. Henceforth, any such 
sequence is extended by zero for negative indices, i.e., we conventionally
set $\sfV^{-1}=\sfV^{-2}=\ldots:=0$. The first time reconstruction we consider is 
the function 
\begin{equation}
R(\sfV) \in P^2(J_\tau;Y) \cap C^0(J;Y),
\end{equation}
which is defined such that, for all $n\in\polN$ and all $t\in J_n$,
\begin{equation} \label{eq:def_R}
R(\sfV)(t) := \sfVn + \frac{\sfVnp-\sfVnm}{2\tau}(t-t^n) + \frac{\sfVnp-2\sfVn+\sfVnm}{\tau^2} \frac12(t-t^n)^2.
\end{equation}
Notice that $R(\sfV)$ is the restriction to $J_n$ of the Lagrange interpolate at the discrete time nodes $t^{n-1}$, $t^n$, $t^{n+1}$.
While the fact that $R(\sfV) \in P^2(J_\tau;Y)$ is obvious by construction, we now
justify the claim $R(\sfV) \in C^0(J;Y)$.

\begin{lemma}[Time-reconstruction] \label{lem:R_C0}
Let $R(\sfV)$ be defined by \eqref{eq:def_R}.
Then, $R(\sfV) \in C^0(J;Y)$.
\end{lemma}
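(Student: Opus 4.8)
The plan is to reduce global continuity to the matching of one-sided limits at the discrete time nodes. On each interval $J_n$ the function $R(\sfV)$ is, by construction, the restriction of a $Y$-valued quadratic time-polynomial, so it is continuous on $J_n$ and extends continuously to the closed interval $[t^n,t^{n+1}]$. Since $(J_n)_{n\in\polN}$ partitions $J$, the nodes $t^{n+1}$ ($n\in\polN$) are the only candidate points of discontinuity, and it suffices to show that the left limit of $R(\sfV)|_{J_n}$ at $t^{n+1}$ equals $R(\sfV)|_{J_{n+1}}(t^{n+1})$.

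The key step is a short algebraic identity. Evaluating~\eqref{eq:def_R} on $J_{n+1}$ at $t=t^{n+1}$ (so that $t-t^{n+1}=0$) gives immediately $R(\sfV)|_{J_{n+1}}(t^{n+1})=\sfV^{n+1}$. For the left limit at $t^{n+1}$, I substitute $t-t^n=\tau$ into the expression~\eqref{eq:def_R} valid on $J_n$: the three summands reduce to $\sfV^n$, $\tfrac12(\sfV^{n+1}-\sfV^{n-1})$, and $\tfrac12(\sfV^{n+1}-2\sfV^n+\sfV^{n-1})$, whose sum telescopes to $\sfV^{n+1}$. Hence the two one-sided values agree, which yields $R(\sfV)\in C^0(J;Y)$. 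The structural reason behind this identity is that the quadratic defining $R(\sfV)$ on $J_n$ is the Lagrange interpolate through $(t^{n-1},\sfV^{n-1})$, $(t^n,\sfV^n)$, $(t^{n+1},\sfV^{n+1})$, while the one on $J_{n+1}$ interpolates $(t^n,\sfV^n)$, $(t^{n+1},\sfV^{n+1})$, $(t^{n+2},\sfV^{n+2})$, so both take the common nodal value $\sfV^{n+1}$ at $t^{n+1}$.

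I do not expect any genuine obstacle: the claim follows directly from the defining formula~\eqref{eq:def_R}, and the only mildly delicate point is the bookkeeping with the convention $\sfV^{-1}=\sfV^{-2}=\cdots=0$. This convention intervenes only at $n=0$; there it simply yields $R(\sfV)(t^0)=\sfV^0$ and $R(\sfV)(t^1)=\sfV^1$, and continuity at the left endpoint $t^0=0$ is automatic since no interval lies to the left of $J_0$.
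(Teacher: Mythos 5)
Your proof is correct and coincides with the paper's own argument: both evaluate the quadratic on $J_n$ at $t-t^n=\tau$, observe that the three terms telescope to $\sfV^{n+1}$, and match this with the value of the quadratic on $J_{n+1}$ at its left endpoint. The added remarks on the Lagrange-interpolation structure and the zero-extension convention are consistent with the paper but not needed beyond what its one-line computation already establishes.
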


\begin{proof}
For all $n\in \polN$, we observe that
\[
R(\sfV)(t^{n+1}_-)=\sfVn + \frac{\sfVnp-\sfVnm}{2} + \frac{\sfVnp-2\sfVn+\sfVnm}{2} = \sfVnp = R(\sfV)(t^{n+1}).
\]
This completes the proof.
\end{proof}

The second time reconstruction provides $C^2$-smoothness in time, and its construction hinges on piecewise quartic time-polynomials. Specifically, we consider the time-reconstructed function
\begin{equation}
L(\sfV) \in P^4(J_\tau;Y) \cap  C^2(J;Y),
\end{equation}
which is defined such that, for all $n\in\polN$ and all $t\in J_n$,
\begin{equation} \label{eq:def_L}
L(\sfV)(t) := \alpha^n + \beta^n(t-t^n) + \gamma^n \frac12(t-t^n)^2
+ \rev{\vartheta^n} \frac16(t-t^n)^3 + \epsilon^n \frac{1}{24}(t-t^n)^4,
\end{equation}
with the coefficients
\begin{subequations} \label{eq:def_L_bis} \begin{alignat}{2}
\alpha^n &= \frac{3\sfVnp+17\sfVn+5\sfVnm-\sfVnmm}{24}, &\quad
\beta^n &= \frac{5\sfVnp+3\sfVn-9\sfVnm+\sfVnmm}{12\tau}, \\
\gamma^n &= \frac{\sfVnp-2\sfVn+\sfVnm}{\tau^2}, &\quad
\rev{\vartheta^n} &= \frac{\sfVnpp-2\sfVnp+2\sfVnm-\sfVnmm}{2\tau^3}, \\
\epsilon^n &= \frac{\sfVnpp-4\sfVnp+6\sfVn-4\sfVnm+\sfVnmm}{\tau^4}. &&
\end{alignat} \end{subequations}
As above, the fact that $L(\sfV) \in P^4(J_\tau;Y)$ is obvious by construction, and we now
justify the claim $L(\sfV) \in C^2(J;Y)$.

\begin{lemma}[Smooth time-reconstruction] \label{lem:L_C2}
Let $L(\sfV)$ be defined by \eqref{eq:def_L}-\eqref{eq:def_L_bis}.
Then, $L(\sfV) \in C^2(J;Y)$.
\end{lemma}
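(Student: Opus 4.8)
The plan is to verify directly that the piecewise quartic $L(\sfV)$ and its first two derivatives match across each interior node $t^{n+1}$, i.e., that $L(\sfV)(t^{n+1}_-)=L(\sfV)(t^{n+1})=\alpha^{n+1}$, that $\dot L(\sfV)(t^{n+1}_-)=\beta^{n+1}$, and that $\ddot L(\sfV)(t^{n+1}_-)=\gamma^{n+1}$. Since on each $J_n$ the function $L(\sfV)$ is a polynomial (hence smooth there), these three matching conditions at every node are exactly what is needed for $L(\sfV)\in C^2(J;Y)$. From \eqref{eq:def_L} one reads off, on $J_n$ evaluated at the right endpoint, the three quantities
\[
L(\sfV)(t^{n+1}_-)=\alpha^n+\beta^n\tau+\tfrac{1}{2}\gamma^n\tau^2+\tfrac{1}{6}\vartheta^n\tau^3+\tfrac{1}{24}\epsilon^n\tau^4,
\]
\[
\dot L(\sfV)(t^{n+1}_-)=\beta^n+\gamma^n\tau+\tfrac12\vartheta^n\tau^2+\tfrac16\epsilon^n\tau^3,
\qquad
\ddot L(\sfV)(t^{n+1}_-)=\gamma^n+\vartheta^n\tau+\tfrac12\epsilon^n\tau^2.
\]

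First I would substitute the formulas \eqref{eq:def_L_bis} for $\alpha^n,\beta^n,\gamma^n,\vartheta^n,\epsilon^n$ (with their $\tau$-powers cancelled against the explicit powers of $\tau$ above) into each of these three expressions and collect the coefficients of $\sfV^{n+2},\sfV^{n+1},\sfV^{n},\sfV^{n-1},\sfV^{n-2}$. For the $C^0$ check, the target is $\alpha^{n+1}=\tfrac{1}{24}(3\sfV^{n+2}+17\sfV^{n+1}+5\sfV^{n}-\sfV^{n-1})$; for the $C^1$ check the target is $\beta^{n+1}=\tfrac{1}{12\tau}(5\sfV^{n+2}+3\sfV^{n+1}-9\sfV^{n}+\sfV^{n-1})$; for the $C^2$ check the target is $\gamma^{n+1}=\tfrac{1}{\tau^2}(\sfV^{n+2}-2\sfV^{n+1}+\sfV^{n})$. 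The coefficients on both sides are rational numbers with small denominators, so each of the three identities reduces to a handful of elementary fraction equalities, one per value $\sfV^{n+j}$; I would tabulate them and check them off. (The $C^2$ identity is the lightest: $\gamma^n+\vartheta^n\tau+\tfrac12\epsilon^n\tau^2 = \tfrac{1}{\tau^2}\big[(\sfV^{n+1}-2\sfV^n+\sfV^{n-1})+\tfrac12(\sfV^{n+2}-2\sfV^{n+1}+2\sfV^{n-1}-\sfV^{n-2})+\tfrac12(\sfV^{n+2}-4\sfV^{n+1}+6\sfV^{n}-4\sfV^{n-1}+\sfV^{n-2})\big]$, and the bracket collapses to $\sfV^{n+2}-2\sfV^{n+1}+\sfV^{n}$.) Exactly as in the proof of Lemma~\ref{lem:R_C0}, these computations use nothing about $\sfV$ beyond linearity, so they are valid in the Banach space $Y$.

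The only genuine subtlety — and the step I expect to be the real content rather than routine bookkeeping — is that the matching conditions involve five consecutive values $\sfV^{n-2},\ldots,\sfV^{n+2}$, so the three equations above must hold as identities in these five free vectors simultaneously; this is precisely the over-determined linear system whose solvability forced the particular choice of coefficients \eqref{eq:def_L_bis} in the first place. Concretely, a general piecewise quartic interpolant built from the nodal data would have $5$ degrees of freedom per interval and $3$ interface conditions per node, and the specific asymmetric stencil in \eqref{eq:def_L_bis} (with weights leaning toward past values) is chosen so that all three interface conditions close consistently; one should therefore present the verification as confirming that this designed stencil does what it was built to do. Finally I would note that the $j=n=0$ (and $n=1$) edge cases are covered automatically by the convention $\sfV^{-1}=\sfV^{-2}=0$, so no separate argument for the first intervals is needed, and conclude that $L(\sfV)\in P^4(J_\tau;Y)\cap C^2(J;Y)$.
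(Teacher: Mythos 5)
Your proposal is correct and follows exactly the paper's own argument: on each $J_n$ the function is a polynomial, so it suffices to check that the value, first derivative, and second derivative at $t^{n+1}_-$ equal $\alpha^{n+1}$, $\beta^{n+1}$, and $\gamma^{n+1}$ respectively, which is a direct (if tedious) verification from \eqref{eq:def_L_bis}; your explicit check of the $C^2$ identity is accurate. The paper's proof is the same three-line computation, so there is nothing further to add.
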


\begin{proof}
Let us set $v:=L(\sfV)$.
For all $n\in \polN$, elementary manipulations show that
\begin{align*}
v(t^{n+1}_-) &=\alpha^n+\beta^n\tau+\gamma^n\frac12\tau^2+\rev{\vartheta^n}\frac16\tau^3
+ \epsilon^n\frac{1}{24}\tau^4 = \alpha^{n+1} = v(t^{n+1}), \\
\dot v(t^{n+1}_-) &=\beta^n+\gamma^n\tau+\rev{\vartheta^n}\frac12\tau^2
+ \epsilon^n\frac{1}{6}\tau^3 = \beta^{n+1} = \dot v(t^{n+1}), \\
\ddot v(t^{n+1}_-) &=\gamma^n+\rev{\vartheta^n}\tau
+ \epsilon^n\frac{1}{2}\tau^2 = \gamma^{n+1} = \ddot v(t^{n+1}).
\end{align*}
This concludes the proof.
\end{proof}

\begin{remark}[Consistency]
Elementary manipulations of Taylor polynomials show that in the case of a sequence
$\sfV$ such that $\sfVn=v(t^n)$ for some smooth function $v\in C^6(J;Y)$ supported
away from zero, the coefficients defining the time-reconstructed function $L(\sfV)$  
in~\eqref{eq:def_L_bis} are such that 
$\alpha^n = v(t^n)+O(\tau^2)$, $\beta^n=\dot v(t^n) + O(\tau^2)$,
$\gamma^n=\ddot v(t^n)+O(\tau^2)$, $\rev{\vartheta^n}=\dddot v(t^n)+O(\tau^2)$, and
$\epsilon^n=\ddddot v(t^n)+O(\tau^2)$. 
\end{remark}

Consider a sequence $\sfV:=(\sfV^n)_{n\in\polN}$ in $Y$ such that $\sfV^0=0$.
Define the sequence $(\rev{D^2_\tau}\sfV^n)_{n\in\polN}$ such that 
$\rev{D^2_\tau}\sfV^n := \frac{1}{\tau^2}\big( \sfV^{n+1}-2\sfV^n+\sfV^{n-1} \big)$,
for all $n\in\polN.$ (It is legitimate to set $\rev{D^2_\tau}\sfV^{-1}=0$ since 
$\sfV^0=0$.)

\begin{lemma}[Commuting with second-order time derivative] \label{lem:commut}
The following holds:
\begin{equation} \label{eq:commut}
\frac{d^2}{dt^2} L(\sfV)(t) = R(\rev{D^2_\tau} \rev{\sfV})(t) \quad \forall t\in J.
\end{equation}
\end{lemma}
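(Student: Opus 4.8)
The plan is to prove~\eqref{eq:commut} by a direct, interval-by-interval computation. Since the intervals $\{J_n\}_{n\in\polN}$ partition $J$, it suffices to establish the identity on each $J_n$, and the main point is that both sides, restricted to $J_n$, are $Y$-valued polynomials of degree at most $2$ in the local variable $t-t^n$, so the whole identity reduces to matching three coefficients. Differentiating the quartic~\eqref{eq:def_L} twice gives, for $t\in J_n$,
\[
\frac{d^2}{dt^2}L(\sfV)(t) = \gamma^n + \vartheta^n(t-t^n) + \epsilon^n\,\tfrac12(t-t^n)^2,
\]
while, writing $\mathsf{W}:=D^2_\tau\sfV$ and applying the definition~\eqref{eq:def_R} of $R$ to the sequence $\mathsf{W}$,
\[
R(D^2_\tau\sfV)(t) = \mathsf{W}^n + \frac{\mathsf{W}^{n+1}-\mathsf{W}^{n-1}}{2\tau}(t-t^n) + \frac{\mathsf{W}^{n+1}-2\mathsf{W}^n+\mathsf{W}^{n-1}}{\tau^2}\,\tfrac12(t-t^n)^2.
\]
Hence it remains to check $\gamma^n=\mathsf{W}^n$, $\vartheta^n=\frac{\mathsf{W}^{n+1}-\mathsf{W}^{n-1}}{2\tau}$, and $\epsilon^n=\frac{\mathsf{W}^{n+1}-2\mathsf{W}^n+\mathsf{W}^{n-1}}{\tau^2}$.

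The first identity is immediate, since by definition $\mathsf{W}^n = D^2_\tau\sfVn = \tfrac{1}{\tau^2}(\sfVnp-2\sfVn+\sfVnm)$, which is exactly the formula for $\gamma^n$ in~\eqref{eq:def_L_bis}. For the other two, I would substitute the definitions $\mathsf{W}^{n+1}=\tfrac{1}{\tau^2}(\sfVnpp-2\sfVnp+\sfVn)$ and $\mathsf{W}^{n-1}=\tfrac{1}{\tau^2}(\sfVn-2\sfVnm+\sfVnmm)$ and simplify: the centered first difference $\frac{\mathsf{W}^{n+1}-\mathsf{W}^{n-1}}{2\tau}$ collapses to $\frac{\sfVnpp-2\sfVnp+2\sfVnm-\sfVnmm}{2\tau^3}$, and the centered second difference $\frac{\mathsf{W}^{n+1}-2\mathsf{W}^n+\mathsf{W}^{n-1}}{\tau^2}$ — a second difference of a second difference, i.e.\ a fourth-order difference of $\sfV$ — collapses to $\frac{\sfVnpp-4\sfVnp+6\sfVn-4\sfVnm+\sfVnmm}{\tau^4}$. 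These are precisely the formulas for $\vartheta^n$ and $\epsilon^n$ in~\eqref{eq:def_L_bis}; indeed, the coefficients of $L$ were reverse-engineered so that this commuting property holds, together with the $C^2$ matching underlying Lemma~\ref{lem:L_C2}.

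For the edge cases $n\in\{0,1\}$, I would invoke the standing convention $\sfV^{-1}=\sfV^{-2}=0$ and the hypothesis $\sfV^0=0$, which make all the displayed formulas meaningful and are compatible with $D^2_\tau\sfV^{-1}=0$, so that $R(D^2_\tau\sfV)$ is well defined on $J_0$. Assembling the per-interval identities over all $n\in\polN$ yields~\eqref{eq:commut} on $J$; the $C^2$-regularity of $L(\sfV)$ from Lemma~\ref{lem:L_C2} and the continuity of $R(\cdot)$ from Lemma~\ref{lem:R_C0} provide a reassuring consistency check at the time nodes, but are not logically needed, since the half-open intervals $J_n$ already tile $J$. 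There is no genuine obstacle here: the argument is pure discrete calculus, and the only thing that demands care is the index bookkeeping together with the role of the zero initial values.
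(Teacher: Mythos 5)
Your proposal is correct and is essentially the paper's own proof: both reduce \eqref{eq:commut} to an interval-by-interval matching of the three local coefficients $\gamma^n$, $\vartheta^n$, $\epsilon^n$ of $\frac{d^2}{dt^2}L(\sfV)$ on $J_n$ against the Lagrange coefficients of $R(D^2_\tau\sfV)$ built from $D^2_\tau\sfV^{n-1}$, $D^2_\tau\sfV^{n}$, $D^2_\tau\sfV^{n+1}$, and your algebra for the three identities checks out. The only (inessential) divergence is that you downplay Lemmas~\ref{lem:R_C0} and~\ref{lem:L_C2}, which the paper invokes so that the two-sided second derivative of $L(\sfV)$ is classically defined at the interior nodes where two quartic pieces meet; apart from that remark the two arguments coincide.
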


\begin{proof}
Since $L(\sfV)$ is of class $C^2$ and $R(\rev{D^2_\tau} \rev{\sfV})$ is of class $C^0$ owing to
Lemma~\ref{lem:L_C2} and Lemma~\ref{lem:R_C0}, respectively, it suffices to establish
the identity~\eqref{eq:commut} for all $t\in J_n$ and all $n\in\polN$. 
Recalling the definitions in~\eqref{eq:def_L_bis}, elementary manipulations show that
the coefficients $\gamma^n$, $\rev{\vartheta^n}$, and $\epsilon^n$ defining $L(\sfV)$ satisfy
\[
\gamma^n = \rev{D^2_\tau} \sfV^n, \qquad \rev{\vartheta^n} = \frac{\rev{D^2_\tau} \sfV^{n+1}-\rev{D^2_\tau} \sfV^{n-1}}{2\tau},
\qquad \epsilon^n = \frac{\rev{D^2_\tau} \sfV^{n+1}-2\rev{D^2_\tau} \sfV^n+\rev{D^2_\tau} \sfV^{n-1}}{\tau^2}.
\]
As a consequence, we have 
\[
\frac{d^2}{dt^2} L(\sfV)(t) = \gamma^n + \rev{\vartheta^n}(t-t^n)+\epsilon^n\frac12(t-t^n)^2
= R(\rev{D^2_\tau}\sfV)(t) \quad \forall t\in J_n, \; \forall n\in\polN.
\]
This concludes the proof.
\end{proof}

\subsection{Rewriting of the leapfrog scheme}

The commuting property established in Lemma~\ref{lem:commut} allows us to rewrite the leapfrog scheme in a time-functional setting. 
Let us set 
\begin{equation} \label{eq:def_ftau}
f_\tau := R(\sfF)\in P^2(J_\tau;L^2(\Omega))\cap C^0(J;L^2(\Omega)) 
\quad\text{with}\quad \sfF^n:=f(t^n)\; \forall n\in\polN.
\end{equation} 
Notice that the sequence $(\sfF^n)_{n\in\polN}$ can indeed be extended by zero
for negative indices since $f$ is supported away from zero. We also set
\begin{equation} \label{eq:defs_uw}
\uht := R(\sfU) \in P^2(J_\tau;V_h)\cap C^0(J;V_h),
\qquad
\wht := L(\sfU) \in P^4(J_\tau;V_h)\cap C^2(J;V_h), 
\end{equation}
where the sequence $\sfU:=(\sfU^n)_{n \in \polN}$ solves the fully discrete scalar wave
equation~\eqref{eq:leapfrog} \rev{with $\sfU^0=\sfU^1=0$}. The sequence of accelerations is defined as
$\sfA:=(\sfA^n)_{n\in\polN}$ with
\begin{equation} \label{eq:def_sfA}
\sfA^n := \rev{D^2_\tau}\sfU^n = \frac{1}{\tau^2}(\sfUnp-2\sfUn+\sfUnm) \quad \forall n\in\polN.
\end{equation}
Notice that $\sfA^0=0$ by definition and that it is legitimate to set
$\sfA^{-1}=\sfA^{-2}=\ldots=0$.

\begin{lemma}[Time-reconstructed wave equation]
The following holds:
\begin{equation} \label{eq:time_leapfrog}
(\ddwht(t),v_h)_\Omega + (\nabla \uht(t),\nabla v_h)_\Omega = (f_\tau(t),v_h)_\Omega
\quad \forall t\in J, \; \forall v_h \in V_h.
\end{equation}
\end{lemma}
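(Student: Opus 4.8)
The plan is to exploit the commuting identity from Lemma~\ref{lem:commut}, namely $\ddwht = \frac{d^2}{dt^2}L(\sfU) = R(D^2_\tau\sfU) = R(\sfA)$, so that the left-hand side of~\eqref{eq:time_leapfrog} becomes $(R(\sfA)(t),v_h)_\Omega + (\nabla R(\sfU)(t),\nabla v_h)_\Omega$. Since $R(\sfA)$, $R(\sfU)$, $f_\tau = R(\sfF)$ are all in $P^2(J_\tau;\cdot)\cap C^0(J;\cdot)$ and therefore continuous, it suffices to verify~\eqref{eq:time_leapfrog} for every $t\in J_n$ and every $n\in\polN$. On $J_n$, each reconstruction is the quadratic Lagrange interpolate through the three nodal values at $t^{n-1},t^n,t^{n+1}$; since the map $t\mapsto R(\cdot)(t)$ restricted to $J_n$ is linear in the nodal data and the three monomials $1, (t-t^n), \tfrac12(t-t^n)^2$ are linearly independent, it suffices to check the identity at the three time nodes $t^{n-1}$, $t^n$, $t^{n+1}$ (equivalently, to match the values, first derivatives, and second derivatives at $t^n$ of the two quadratics, or simply the three nodal values).

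First I would record that, because $R$ reproduces nodal values, $R(\sfA)(t^m) = \sfA^m$, $R(\sfU)(t^m) = \sfU^m$, and $f_\tau(t^m) = f(t^m) = f^m$ for $m\in\{n-1,n,n+1\}$ (with the convention that sequences vanish for negative indices, and using $\sfU^0=\sfU^1=0$, $f(0)=0$, $\sfA^0=0$ to handle the boundary cases $n=0,1$). Then at each such node $t^m$ the claimed identity reads
\[
(\sfA^m, v_h)_\Omega + (\nabla \sfU^m,\nabla v_h)_\Omega = (f^m, v_h)_\Omega \quad \forall v_h\in V_h,
\]
which for $m\ge1$ is exactly the leapfrog scheme~\eqref{eq:leapfrog} after recalling $\sfA^m = D^2_\tau\sfU^m = \frac{1}{\tau^2}(\sfU^{m+1}-2\sfU^m+\sfU^{m-1})$, and for $m=0$ is the trivial identity $0=0$. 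Hence the two quadratics in $t$ on $J_n$ agree at three distinct points, so they coincide on all of $J_n$; letting $n$ range over $\polN$ and invoking continuity at the nodes gives~\eqref{eq:time_leapfrog} for all $t\in J$.

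The only point requiring a little care — and the nearest thing to an obstacle — is the bookkeeping at the first time levels: one must check that extending $\sfU$, $\sfA$, and $\sfF$ by zero for negative indices is consistent with the zero initial data $\sfU^0=\sfU^1=0$ and with $f$ being supported away from the origin, so that the nodal identities above genuinely hold for $n=0$ and $n=1$ as well. This is straightforward but should be stated explicitly. Everything else is the routine linear-algebra observation that a quadratic polynomial is pinned down by its three nodal values.
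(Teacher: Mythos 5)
Your proof is correct and follows essentially the same route as the paper: rewrite the leapfrog scheme as the nodal identity $(\sfA^n,v_h)_\Omega + (\nabla \sfU^n,\nabla v_h)_\Omega = (\sfF^n,v_h)_\Omega$ and transfer it to all $t$ via the reconstruction $R$ together with the commuting identity $\ddwht = R(\sfA)$ of Lemma~\ref{lem:commut}. The only (harmless) difference is that the paper simply applies the linear operator $R$ to the sequence identity, whereas you verify the resulting equality of quadratics on each $J_n$ by matching the three Lagrange nodes, with the same bookkeeping at the initial steps.
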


\begin{proof}
Since the fully discrete wave equation can be rewritten as
\[
(\sfA^n,v_h)_\Omega + (\nabla \sfU^n,\nabla v_h)_\Omega = (\sfF^n,v_h)_\Omega
\quad \forall n\in\polN, \; \forall v_h \in V_h,
\]
the claim follows by applying the time-reconstruction operator $R$ to this equation and invoking Lemma~\ref{lem:commut} which gives $R(\sfA)(t) = \frac{d^2}{dt^2}L(\sfU) = \ddwht(t)$.
\end{proof} 

\subsection{Stability and approximation properties}

In this section, we investigate some stability and approximation properties
of the time reconstruction operator $R$. In what follows, for positive real
numbers $A$ and $B$, we abbreviate as $A\lesssim B$ the inequality $A\le CB$
with a generic constant $C$ whose value can change at each occurrence as long
as it is independent of the time step, the mesh size, the damping parameter $\rho$,
and, whenever relevant, any function involved in the bound.

\begin{lemma}[Stability]
\label{lem:stab}
Let $\sfV:=(\sfV^n)_{n\in\polN}$ be a bounded sequence in $Y$ 
and set $\dot \sfV^{n+\frac12}:= \frac{1}{\tau}(\sfV^{n+1}-\sfV^n)$.
The following holds:
\begin{subequations} \begin{align} \label{eq:stab1}
\int_0^{+\infty} \|R(\sfV)\|_Y^2 e^{-2\rho t}dt & \lesssim \sum_{n\in\polN} \tau \|\sfV^n\|_Y^2 e^{-2\rho t^n}, \\
\int_0^{+\infty} \|\tfrac{d}{dt}R(\sfV)\|_Y^2 e^{-2\rho t}dt & \lesssim \sum_{n\in\polN} \tau \|\dot\sfV^{n+\frac12}\|_Y^2 e^{-2\rho t^n}. \label{eq:stab2}
\end{align} \end{subequations}
\end{lemma}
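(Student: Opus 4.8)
The plan is to establish both bounds \eqref{eq:stab1} and \eqref{eq:stab2} by working cell-by-cell on each time interval $J_n=[t^n,t^{n+1})$, bounding the $Y$-norm of the quadratic polynomial $R(\sfV)|_{J_n}$ (respectively its time-derivative) pointwise in terms of the nodal values, and then summing against the exponential weight. The starting point is the explicit formula \eqref{eq:def_R}: on $J_n$, writing $t-t^n=\sigma\tau$ with $\sigma\in[0,1)$, we have $R(\sfV)(t)=\sfV^n + \tfrac{\sigma}{2}(\sfV^{n+1}-\sfV^{n-1}) + \tfrac{\sigma^2}{2}(\sfV^{n+1}-2\sfV^n+\sfV^{n-1})$, which is a fixed linear combination (with $\sigma$-dependent but uniformly bounded coefficients) of $\sfV^{n-1}$, $\sfV^n$, $\sfV^{n+1}$. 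Hence $\|R(\sfV)(t)\|_Y \lesssim \|\sfV^{n-1}\|_Y + \|\sfV^n\|_Y + \|\sfV^{n+1}\|_Y$ for all $t\in J_n$, and integrating the square over $J_n$ gives $\int_{J_n}\|R(\sfV)\|_Y^2 e^{-2\rho t}dt \lesssim \tau\big(\|\sfV^{n-1}\|_Y^2 + \|\sfV^n\|_Y^2 + \|\sfV^{n+1}\|_Y^2\big)e^{-2\rho t^n}$, where the weight $e^{-2\rho t}$ on $J_n$ is comparable to $e^{-2\rho t^n}$ up to the factor $e^{2\rho\tau}\le e^2$ (this is exactly where the assumption \eqref{eq:ass_rho_dt}, i.e. $\rho\tau\le1$, enters). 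Summing over $n\in\polN$ and re-indexing the shifted sums, using that $e^{-2\rho t^{n\pm1}}$ is comparable to $e^{-2\rho t^n}$ for the same reason, yields \eqref{eq:stab1}.

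For \eqref{eq:stab2} I would proceed identically, differentiating \eqref{eq:def_R} to obtain, on $J_n$, $\tfrac{d}{dt}R(\sfV)(t) = \tfrac{\sfV^{n+1}-\sfV^{n-1}}{2\tau} + \tfrac{\sfV^{n+1}-2\sfV^n+\sfV^{n-1}}{\tau^2}(t-t^n)$. The key observation is that this must be rewritten in terms of the forward difference quotients $\dot\sfV^{n+\frac12}=\tfrac1\tau(\sfV^{n+1}-\sfV^n)$ and $\dot\sfV^{n-\frac12}=\tfrac1\tau(\sfV^n-\sfV^{n-1})$: one has $\tfrac{\sfV^{n+1}-\sfV^{n-1}}{2\tau} = \tfrac12(\dot\sfV^{n+\frac12}+\dot\sfV^{n-\frac12})$ and $\tfrac{\sfV^{n+1}-2\sfV^n+\sfV^{n-1}}{\tau^2} = \tfrac1\tau(\dot\sfV^{n+\frac12}-\dot\sfV^{n-\frac12})$. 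Substituting and again writing $t-t^n=\sigma\tau$, the bracket collapses to a fixed linear combination of $\dot\sfV^{n+\frac12}$ and $\dot\sfV^{n-\frac12}$ with uniformly bounded ($\sigma$-dependent) coefficients: $\tfrac{d}{dt}R(\sfV)(t) = \big(\tfrac12+\sigma\big)\dot\sfV^{n+\frac12} + \big(\tfrac12-\sigma\big)\dot\sfV^{n-\frac12}$. Therefore $\|\tfrac{d}{dt}R(\sfV)(t)\|_Y \lesssim \|\dot\sfV^{n+\frac12}\|_Y + \|\dot\sfV^{n-\frac12}\|_Y$ on $J_n$, and squaring, integrating over $J_n$ (picking up a factor $\tau$), absorbing the exponential weight into $e^{-2\rho t^n}$ via $\rho\tau\le1$, and summing over $n$ with a harmless re-indexing of the $\dot\sfV^{n-\frac12}$ terms gives \eqref{eq:stab2}.

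The two ingredients doing all the work are thus purely elementary: (i) a one-dimensional polynomial estimate on the reference interval $[0,1)$ — a quadratic (resp. its derivative, a linear function) is controlled on $[0,1]$ by the magnitudes of its defining coefficients, which are themselves linear in the nodal data, so this reduces to boundedness of finitely many explicit numerical coefficients; and (ii) the weight-comparison $e^{-2\rho t}\simeq e^{-2\rho t^n}$ for $t\in J_n$ and $e^{-2\rho t^{n\pm1}}\simeq e^{-2\rho t^n}$, both valid because $\rho\tau\le1$. No genuine obstacle is expected here; the only point requiring a little care — and the one I would flag as the ``main step'' — is the algebraic rewriting of $\tfrac{d}{dt}R(\sfV)$ in \eqref{eq:def_R} in terms of the half-step differences $\dot\sfV^{n\pm\frac12}$, since it is essential that no negative power of $\tau$ survives after the substitution (the term $\tau^{-2}(\sfV^{n+1}-2\sfV^n+\sfV^{n-1})(t-t^n)$ looks singular in $\tau$ until one recognizes it as $\tau^{-1}(\dot\sfV^{n+\frac12}-\dot\sfV^{n-\frac12})\cdot\sigma\tau = \sigma(\dot\sfV^{n+\frac12}-\dot\sfV^{n-\frac12})$, which is $O(1)$). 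Once this cancellation is made explicit, both bounds follow by the same routine integrate-and-sum argument.
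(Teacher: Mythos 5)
Your proof is correct and follows essentially the same route as the paper: bound $R(\sfV)$ (resp.\ its derivative) on each $J_n$ by the nodal values $\sfV^{n-1},\sfV^n,\sfV^{n+1}$ (resp.\ the half-step differences $\dot\sfV^{n\pm\frac12}$), then integrate, compare weights using $\rho\tau\le1$, and sum with re-indexing. The only difference is cosmetic — you compute the interpolation coefficients explicitly where the paper invokes an inverse inequality in time, and your identity $\tfrac{d}{dt}R(\sfV)=(\tfrac12+\sigma)\dot\sfV^{n+\frac12}+(\tfrac12-\sigma)\dot\sfV^{n-\frac12}$ is exactly what underlies the paper's bound $\|\tfrac{d}{dt}R(\sfV)\|_{L^\infty(J_n;Y)}\lesssim\max(\|\dot\sfV^{n-\frac12}\|_Y,\|\dot\sfV^{n+\frac12}\|_Y)$.
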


\begin{proof}
Invoking inverse inequalities in time shows that $\|R(\sfV)\|_{L^\infty(J_n;Y)} \lesssim \max_{\delta\in\{-1,0,1\}} \|\sfV^{n+\delta}\|_Y$. This gives 
\[
\int_0^{+\infty} \|R(\sfV)\|_Y^2 e^{-2\rho t}dt \lesssim \sum_{n\in\polN} \tau \max_{\delta\in\{-1,0,1\}} \|\sfV^{n+\delta}\|_Y^2 e^{-2\rho t^n},
\]
whence \eqref{eq:stab1} follows since $\rho\tau\le 1$ (see~\eqref{eq:ass_rho_dt}).
The proof of~\eqref{eq:stab2} is similar and uses that $\|\frac{d}{dt}R(\sfV)\|_{L^\infty(J_n;Y)} 
\lesssim \max(\|\dot\sfV^{n-\frac12}\|_Y,\|\dot\sfV^{n+\frac12}\|_Y)$.
\end{proof}

\begin{remark}[\rev{Bound on undamped energy for time-reconstructed solutions}] 
\label{rem:bnd_engy_wht}
\rev{For all $n\in\polN$, we define the midpoint state and velocity,
$\sfU\nmz := \frac12 (\sfU^{n+1}+\sfU^n)$ and 
$\dsfU\nmz := \frac{1}{\tau}(\sfU^{n+1}-\sfU^n)$, 
as well as the discrete undamped energy
$E_\sfU\nmz := \frac12 \|\dsfU\nmz\|_\Omega^2 + \frac12\|\nabla \sfU\nmz\|^2_\Omega$. 
Classical arguments for the leapfrog scheme
(see \textup{\cite{Joly:03}} and Section~\textup{\ref{sec:abstract}} for a global-in-time
bound on the damped energy)
show that, under the CFL condition~\eqref{eq:CFL}, we have
\begin{equation} \label{eq:bnd_undamped_disc_engy}
E_\sfU\nmz \lesssim t^n \sum_{m\in\{0{:}n\}} \tau\|\sfF^n\|_\Omega^2 \lesssim t^n \|f\|_{H^1_\tau(0,t^n;L^2(\Omega))}^2,
\end{equation}
with
\begin{equation*}
\|f\|_{H^1_\tau(0,t^n;L^2(\Omega))}^2
:=
\|f\|_{L^2(0,t^n;L^2(\Omega))}^2 + \tau^2 \|\dot f\|_{L^2(0,t^n;L^2(\Omega))}^2
\end{equation*}
and where the second bound follows from the embedding $H^1(J_n;L^2(\Omega)) \hookrightarrow C^0(\overline J_n;L^2(\Omega))$. Define now, for all $t\in J$, the undamped energies
$\polE_{\uht}(t):=\frac12\|\duht(t)\|_\Omega^2 + \frac12\|\nabla \uht(t)\|_\Omega^2$
and $\polE_{\wht}(t):=\frac12\|\dwht(t)\|_\Omega^2 + \frac12\|\nabla \wht(t)\|_\Omega^2$. 
Owing to~\eqref{eq:def_R}, we infer that, for all $n\in\polN$, 
\[
\|\duht\|_{L^\infty(J_n;L^2(\Omega))} \lesssim \max\big( \|\dsfU\nmz\|_\Omega,\|\dsfU\nmzm\|_\Omega\big),
\]
and
\begin{align*}
\|\nabla\uht\|_{L^\infty(J_n;L^2(\Omega))} &\lesssim \max\big( \|\nabla\sfU^{n+1}\|_\Omega,
\|\nabla\sfU^{n}\|_\Omega,\|\nabla\sfU^{n-1}\|_\Omega\big) \\
&\lesssim \max\big( \|\nabla\sfU\nmz\|_\Omega,
\|\nabla\sfU\nmzm\|_\Omega\big) 
+ \frac{\tau}{h} \max\big( \|\dsfU\nmz\|_\Omega,\|\dsfU\nmzm\|_\Omega\big),
\end{align*}
where the second bound follows from elementary algebraic manipulations and an inverse 
inequality in space. Combining these bounds and invoking the CFL condition readily shows that
$\|\polE_{\uht}\|_{L^\infty(J_n)} \lesssim t^n \|f\|_{H^1_\tau(0,t^n;L^2(\Omega))}^2$. Similar arguments
using~\eqref{eq:def_L}-\eqref{eq:def_L_bis} (omitted for brevity) show that
$\|\polE_{\wht}\|_{L^\infty(J_n)} \lesssim t^n \|f\|_{H^1_\tau(0,t^n;L^2(\Omega))}^2$. In conclusion,
the undamped energies of the time-reconstructed solutions, $\polE_{\uht}(t)$ and
$\polE_{\uht}(t)$, grow at most linearly in $t$ if $f\in H^1(J;L^2(\Omega))$.
}
\end{remark}

For a function $v\in C^0(J;Y)$ supported away from zero, we set $\sfV^n:=v(t^n)$ for all $n\in\polN$ (so that $\sfV^0=v(0)=0$ by assumption). We extend $v$ by zero for $t\le0$ and we set $\sfV^{-1}=\sfV^{-2}=\ldots:=0$. We now bound the approximation error $v-R(\sfV)$ assuming enough smoothness of $v$ in time. 

\begin{lemma}[Approximation] \label{lem:etaf}
The following holds for every function $v\in C^3_{\rm b}(J;Y)$ supported away from zero: 
\begin{subequations} \begin{align} 
\int_0^{+\infty} \|(v-R(\sfV))(t)\|_Y^2 e^{-2\rho t} dt & \lesssim \tau^6 \int_0^{+\infty} \|\dddot v(t)\|_Y^2 e^{-2\rho t} dt, \label{eq:est_etaf} \\
\int_0^{+\infty} \|\tfrac{d}{dt}(v-R(\sfV))(t)\|_Y^2 e^{-2\rho t} dt & \lesssim \tau^4 \int_0^{+\infty} \|\dddot v(t)\|_Y^2 e^{-2\rho t} dt. \label{eq:est_detaf}
\end{align} \end{subequations}
\end{lemma}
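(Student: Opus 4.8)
The plan is to reduce everything to a local estimate on each time interval $J_n$ and then sum with the exponential weight, exploiting that $R(\sfV)$ restricted to $J_n$ is the quadratic Lagrange interpolant of $v$ at the three nodes $t^{n-1}, t^n, t^{n+1}$. First I would invoke the standard polynomial interpolation error bound: since $R(\sfV)|_{J_n}$ interpolates $v$ at $t^{n-1},t^n,t^{n+1}$, the Peano kernel / Taylor-remainder representation of the interpolation error gives, for $t\in J_n$,
\begin{equation*}
\|(v-R(\sfV))(t)\|_Y \lesssim \tau^3 \sup_{\sigma\in I_n}\|\dddot v(\sigma)\|_Y,
\qquad
\|\tfrac{d}{dt}(v-R(\sfV))(t)\|_Y \lesssim \tau^2 \sup_{\sigma\in I_n}\|\dddot v(\sigma)\|_Y,
\end{equation*}
where $I_n:=[t^{n-1},t^{n+1}]$ is the (at most $2\tau$-long) window of nodes involved. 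Here the third derivative suffices because the interpolation error for a degree-$2$ interpolant involves the third derivative; squaring yields local factors $\tau^6$ and $\tau^4$ respectively. Some care is needed near $n=0,1$: there $v$ and $R(\sfV)$ vanish (as $v$ is supported away from zero and the sequence is extended by zero), so the bound holds trivially on those intervals, and for the first interval where $v$ becomes nonzero one uses that $v$ is smooth and vanishes together with enough derivatives at the left end of its support.

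Next I would multiply the local bound by $e^{-2\rho t}$, integrate over $J_n$ (length $\tau$), and sum over $n$. This gives
\begin{equation*}
\int_0^{+\infty}\|(v-R(\sfV))(t)\|_Y^2 e^{-2\rho t}\,dt
\lesssim \tau^6 \sum_{n\in\polN}\tau\, e^{-2\rho t^n}\sup_{\sigma\in I_n}\|\dddot v(\sigma)\|_Y^2,
\end{equation*}
and similarly with $\tau^4$ for the derivative version. The remaining task is to show the right-hand side is controlled by $\tau^6\int_0^{+\infty}\|\dddot v(t)\|_Y^2 e^{-2\rho t}\,dt$. This is exactly the kind of discrete-to-continuous comparison handled by the reasoning already used in the proof of Lemma~\ref{lem:stab}: replace the pointwise supremum of $\|\dddot v\|_Y$ over $I_n$ by an $L^2$-average over $I_n$ plus a correction (using $\dddot v\in C^0$ and, if one wants to be careful, $v\in C^3_{\rm b}$ so that $\dddot v$ is even absolutely continuous is not needed—a Sobolev embedding / fundamental theorem of calculus argument on each $J_n$ converts the sup into an $L^2$-norm on $I_n$ at the cost of a factor). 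Concretely, $\sup_{\sigma\in I_n}\|\dddot v(\sigma)\|_Y^2 \lesssim \tfrac1\tau\int_{I_n}\|\dddot v(\sigma)\|_Y^2\,d\sigma$ when $\dddot v$ has a bounded fourth derivative, but to stay at $C^3$-regularity one instead uses the crude bound $\sup_{I_n}\|\dddot v\|_Y \le \|\dddot v\|_{L^\infty}$ together with the telescoping/geometric-series structure of $\sum_n \tau e^{-2\rho t^n}$; alternatively, and more cleanly, one keeps the interpolation remainder in integral (Peano kernel) form so that $\|(v-R(\sfV))(t)\|_Y \lesssim \tau^{5/2}\big(\int_{I_n}\|\dddot v(\sigma)\|_Y^2 d\sigma\big)^{1/2}$ directly by Cauchy–Schwarz, which after squaring, weighting, and summing, and using $e^{-2\rho t}\eqsim e^{-2\rho t^n}\eqsim e^{-2\rho\sigma}$ for $t,\sigma$ in the $O(\tau)$-window $I_n$ (valid because $\rho\tau\le1$ by~\eqref{eq:ass_rho_dt}), telescopes into the claimed continuous integral with the overlap of consecutive windows $I_n$ contributing only a fixed multiplicative constant.

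I expect the main obstacle to be the bookkeeping in the final discrete-to-continuous comparison: making sure the weight $e^{-2\rho t^n}$ can be pulled out and compared with $e^{-2\rho\sigma}$ uniformly over the window $I_n$ (which is where $\rho\tau\le1$ enters and produces only an $O(1)$ factor), and that the bounded overlap of the windows $\{I_n\}_{n\in\polN}$ — each point $\sigma$ lies in at most a fixed number of them — keeps the sum $\sum_n \int_{I_n}(\cdot)$ comparable to a single integral $\int_0^{+\infty}(\cdot)$. Everything else (the local interpolation error bounds, the vanishing on initial intervals via the support assumption, the reduction to $C^0$/$C^2$ pieces glued by Lemmas~\ref{lem:R_C0}–\ref{lem:L_C2}) is routine. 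I would present the Peano-kernel form of the interpolation remainder to get the $L^2$-on-$I_n$ bound in one stroke, then close with the weighted summation exactly as in Lemma~\ref{lem:stab}.
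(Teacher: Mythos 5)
Your proposal is correct and follows essentially the same route as the paper: a local estimate on each window $I_n=[t^{n-1},t^{n+1}]$ with the remainder kept in integral form so that Cauchy--Schwarz yields $\tau^{6}\int_{I_n}\|\dddot v\|_Y^2$ (resp.\ $\tau^{4}$ for the time derivative) after integrating over $J_n$, followed by the weighted summation using $\rho\tau\le 1$ and the bounded overlap of the windows. The only difference is presentational: you invoke the Peano-kernel form of the quadratic Lagrange interpolation error in one stroke, whereas the paper derives the same bound by explicitly splitting the error into the two finite-difference defects $\Xi_1^n,\Xi_2^n$ and the Taylor remainder $\Xi_3$, each estimated with exact integral remainders.
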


\begin{proof}
Let $v\in C^3_{\rm b}(J;Y)$ be supported away from zero
and set $\dot\sfV^n:=\dot v(t^n)$, $\ddot\sfV^n:=\ddot v(t^n)$ for all $n\in\polN$. 

(1) Proof of~\eqref{eq:est_etaf}. We observe that
\begin{equation} \label{eq:proof_etav0}
\int_0^{+\infty} \|(v-R(\sfV))(t)\|_Y^2 e^{-2\rho t} dt \le \sum_{n\in\polN} e^{-2\rho t^n} \int_{J_n} \|(v-R(\sfV))(t)\|_Y^2dt.
\end{equation}
Let $n\in\polN$ and $t\in J_n$. Using a third-order Taylor expansion of $v$ with exact remainder, we observe that
\begin{equation} \label{eq:proof_etav}
(v-R(\sfV))(t) = \rev{\Xi}^n_1(t-t^n) + \rev{\Xi}^n_2\frac12(t-t^n)^2 + \rev{\Xi}_3(t),
\end{equation}
with
\begin{align*}
\rev{\Xi}_1^n &:= \dot \sfV^n - \frac{\sfV^{n+1}-\sfV^{n-1}}{2\tau}, \quad
\rev{\Xi}^n_2 := \ddot \sfV^n - \frac{\sfV^{n+1}-2\sfV^n+\sfV^{n-1}}{\tau^2}, \quad
\rev{\Xi}_3(t) := \frac12 \int_{t^n}^t (t-s)^2 \dddot v(s) ds.
\end{align*}
Invoking first-order Taylor expansions with exact remainder gives
\begin{align*}
\sfV^{n+1} = \sfV^n + \tau \dot \sfV^n + \int_{t^n}^{t^{n+1}} (t^{n+1}-s)\ddot v(s)ds,
\qquad
\sfV^{n-1} = \sfV^n - \tau \dot \sfV^n + \int_{t^{n-1}}^{t^n} (s-t^{n-1})\ddot v(s)ds.
\end{align*}
We infer that
\[
\rev{\Xi}^n_1 = \frac12 \int_{t^n}^{t^{n+1}}\psi^n(s) \ddot v(s) ds - 
\frac12 \int_{t^{n-1}}^{t^n}\psi^n(s) \ddot v(s) ds = \frac12 \int_{t^n}^{t^{n+1}}\psi^n(s)
\big( \ddot v(s)-\ddot v(\tilde s) \big) ds,
\]
with $\tilde s:=2t^n-s$ and $\psi^n$ denotes the hat basis function in time having support in $[t^{n-1},t^{n+1}]$ and satisfying $\psi^n(t^n)=1$. Since $\|\ddot v(s)-\ddot v(\tilde s)\|_Y \le 
\int_{\tilde s}^s \|\dddot v(\sigma)\|_Y d\sigma \le \tau^{\frac12} \big( \int_{t^{n-1}}^{t^{n+1}} \|\dddot v(\sigma)\|_Y^2d\sigma\big)^{\frac12}$ for all $s\in [t^n,t^{n+1}]$, we infer that
\[
\|\rev{\Xi}^n_1\|_Y^2 \lesssim \tau^3 \int_{t^{n-1}}^{t^{n+1}} \|\dddot v(\sigma)\|_Y^2d\sigma.
\] 
Moreover, using second-order Taylor expansions with exact remainder gives
\[
\rev{\Xi}^n_2 = \frac12 \int_{t^n}^{t^{n+1}}\psi^n(s)^2
\dddot v(s)- \frac12 \int_{t^{n-1}}^{t^n}\psi^n(s)^2 \dddot v(\tilde s) ds, 
\]
so that
\[
\|\rev{\Xi}^n_2\|_Y^2 \lesssim \tau \int_{t^{n-1}}^{t^{n+1}} \|\dddot v(\sigma)\|_Y^2d\sigma.
\]
(Here, it is not necessary to invoke the fourth-order derivative of $v$ to gain an extra power of $\tau$.)
Finally, we have
\[
\|\rev{\Xi}_3\|_{C^0(J_n;Y)}^2 \lesssim \tau^5 \int_{t^n}^{t^{n+1}} \|\dddot v(\sigma)\|_Y^2d\sigma
\quad \forall t\in J_n.
\]
Combining~\eqref{eq:proof_etav} with the above bounds gives
\[
\int_{J_n} \|(v-R(\sfV))(t)\|_Y^2dt \lesssim \tau\big( \tau^2\|\rev{\Xi}_1^n\|_Y^2 +
\tau^4 \|\rev{\Xi}_2^n\|_Y^2 + \|\rev{\Xi}_3\|_{C^0(J_n;Y)}^2 \big) \lesssim
\tau^6 \int_{t^{n-1}}^{t^{n+1}} \|\dddot v(\sigma)\|_Y^2d\sigma.
\]
Using this estimate in~\eqref{eq:proof_etav0} gives
\[
\int_0^{+\infty} \|(v-R(\sfV))(t)\|_Y^2 e^{-2\rho t} dt \lesssim \tau^6 \sum_{n\in\polN} e^{-2\rho t^n}
\int_{t^{n-1}}^{t^{n+1}} \|\dddot v(t)\|_Y^2dt,
\]
and the assertion follows from 
\[
e^{-2\rho t^n} \int_{t^{n-1}}^{t^{n+1}} \|\dddot v(t)\|_Y^2dt \le
\int_{J_{n-1}} \|\dddot v(t)\|_Y^2 e^{-2\rho t}dt + e^2\int_{J_{n}} \|\dddot v(t)\|_Y^2e^{-2\rho t}dt,
\]
recalling that $\rho \tau\le 1$ owing to~\eqref{eq:ass_rho_dt}. 

(2) The proof of~\eqref{eq:est_detaf} is similar and is only sketched. We observe that for all $t\in J_n$,
\[
\frac{d}{dt}(v-R(\sfV))(t) = \rev{\Xi}^n_1 + \rev{\Xi}^n_2(t-t^n) + \rev{\Xi}_4(t),
\]
with $\rev{\Xi}_4(t):=\int_{t^n}^t (t-s)\dddot v(s)ds$. Using the above bounds on $\|\rev{\Xi}^n_1\|_Y$ and $\|\rev{\Xi}^n_2\|_Y$ together with $\|\rev{\Xi}_4\|_{C^0(J_n;Y)}^2\lesssim \tau^3 \int_{t^n}^{t^{n+1}} \|\ddot v(s)\|_Y^2ds$ readily proves~\eqref{eq:est_detaf}. 
\end{proof}

\subsection{A priori estimates on the time-reconstruction error}

The rewriting of the leapfrog scheme in a time-functional setting naturally
leads to the notion of time-reconstruction error defined as
\begin{equation} \label{eq:def_deltaht}
\deltaht(t) := \uht(t)-\wht(t)\quad \forall t\in J.
\end{equation}
The goal of this section is to derive some a priori estimates on the 
time-reconstruction error. It is natural to expect that this quantity
is second-order accurate in $\tau$. We now establish a more precise result
using the damped energy norm. Recall
that $\deltaht \in C^0(J;V_h)$ and $\wht\in C^2(J;V_h)$,
so that the weak time-derivatives 
$\ddeltaht$ and $\dddot{w}_{h\tau}$ can be evaluated by computing locally
the time derivative(s) in each time interval $J_n$. 

\begin{lemma}[Bound on time-reconstruction error] \label{lem:bnd_eta}
Assume that the sequence $(\sfU^n)_{n\in\polN}$ solves the leapfrog scheme~\eqref{eq:leapfrog}
with the initial conditions $\sfU^0=\sfU^1=0$. 
Let $\uht$ and $\wht$ be defined in~\eqref{eq:defs_uw}. 
Assume that $f\in C^3_{\rm b}(J;L^2(\Omega))$.
Assume the CFL condition~\eqref{eq:CFL}. The following holds:
\begin{subequations} \begin{align} 
\int_0^{+\infty} \|\ddeltaht(t)\|_\Omega^2 e^{-2\rho t}dt & \lesssim \frac{\tau^4}{\rho^2}
\int_0^{+\infty} \|\ddot f(t)\|_\Omega^2 e^{-2\rho t}dt, \label{eq:bnd_dot_eta} \\
\int_0^{+\infty} \|\nabla\deltaht(t)\|_\Omega^2  e^{-2\rho t}dt & \lesssim \frac{\tau^4}{\rho^2}
\int_0^{+\infty} \Big\{ \|\ddot f(t)\|_\Omega^2 + \tau^2 \|\dddot f(t)\|_\Omega^2 \Big\} 
e^{-2\rho t}dt, \label{eq:bnd_grad_eta} \\
\int_0^{+\infty} \|\nabla\ddeltaht(t)\|_\Omega^2 e^{-2\rho t}dt & \lesssim \frac{\tau^4}{\rho^2}
\int_0^{+\infty} \|\dddot f(t)\|_\Omega^2 e^{-2\rho t}dt. \label{eq:bnd_grad_dot_eta}
\end{align} 
In addition, we have
\begin{equation} \label{eq:bnd_dddwht}
\int_0^{+\infty} \|\dddot{w}_{h\tau}(t)\|_\Omega^2 e^{-2\rho t}dt
\lesssim \frac{1}{\rho^2}
\int_0^{+\infty} \|\ddot f(t)\|_\Omega^2 e^{-2\rho t}dt.
\end{equation} \end{subequations}
\end{lemma}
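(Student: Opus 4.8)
The plan is to reduce all four estimates to the \emph{discrete} acceleration sequence $\sfA=(\sfA^n)_{n\in\polN}$, $\sfA^n:=D^2_\tau\sfU^n$, and to exploit that, once the leapfrog scheme~\eqref{eq:leapfrog} is differenced in time, $\sfA$ and its time differences again solve leapfrog schemes, to which the discrete damped-energy stability estimate of Section~\ref{section_stability_leapfrog} (the discrete counterpart of Lemma~\ref{lem:a_priori}) applies.

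\emph{Closed-form expressions on each slab.} Since $\uht=R(\sfU)$ is piecewise quadratic, $\tfrac{d^2}{dt^2}\uht|_{J_n}=\sfA^n$ and $\tfrac{d^3}{dt^3}\uht\equiv0$; combined with $\ddwht=R(\sfA)$ (Lemma~\ref{lem:commut}) and $\ddwht(t^n)=\gamma^n=\sfA^n$, this gives $\ddeltaht(t^n)=0$ and, for $t\in J_n$ with $\xi:=t-t^n$,
\[
\ddeltaht(t)=-\frac{\sfA^{n+1}-\sfA^{n-1}}{2\tau}\,\xi-\frac{\sfA^{n+1}-2\sfA^n+\sfA^{n-1}}{\tau^2}\,\frac{\xi^2}{2},\qquad
\dddot{w}_{h\tau}(t)=\frac{\sfA^{n+1}-\sfA^{n-1}}{2\tau}+\frac{\sfA^{n+1}-2\sfA^n+\sfA^{n-1}}{\tau^2}\,\xi ,
\]
so that $\ddeltaht(t)=-\int_{t^n}^{t}\dddot{w}_{h\tau}(s)\,ds$ on $J_n$. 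Likewise, from~\eqref{eq:def_R}--\eqref{eq:def_L_bis}, $\deltaht(t)=(\sfU^n-\alpha^n)+\big(\tfrac{\sfU^{n+1}-\sfU^{n-1}}{2\tau}-\beta^n\big)\xi-\vartheta^n\tfrac{\xi^3}{6}-\epsilon^n\tfrac{\xi^4}{24}$, and substituting the identities $\sfU^n-\alpha^n=-\tfrac{\tau^2}{12}\sfA^n-\tfrac{\tau^2}{24}(\sfA^n-\sfA^{n-1})$, $\tfrac{\sfU^{n+1}-\sfU^{n-1}}{2\tau}-\beta^n=\tfrac{\tau}{12}(\sfA^n-\sfA^{n-1})$, $\vartheta^n=\tfrac{\sfA^{n+1}-\sfA^{n-1}}{2\tau}$, $\epsilon^n=\tfrac{\sfA^{n+1}-2\sfA^n+\sfA^{n-1}}{\tau^2}$ shows that every coefficient of $\deltaht$ is a scaled finite difference of $\sfA$, except for the lone term $-\tfrac{\tau^2}{12}\sfA^n$ (the source of the $\|\ddot f\|$ contribution to~\eqref{eq:bnd_grad_eta}). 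Thus, after integrating in $\xi\in(0,\tau)$, bounding the four left-hand sides reduces to controlling $\sum_n\tau e^{-2\rho t^n}\|\sfA^n\|_\Omega^2$, $\sum_n\tau e^{-2\rho t^n}\|\nabla\sfA^n\|_\Omega^2$, $\sum_n\tau e^{-2\rho t^n}\big\|\tfrac{\sfA^{n+1}-\sfA^{n-1}}{2\tau}\big\|_\Omega^2$, and its $\nabla$-version.

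\emph{Differenced leapfrog problems and discrete stability.} Applying $D^2_\tau$ to~\eqref{eq:leapfrog} shows that $\sfA$ solves the leapfrog scheme with source $D^2_\tau\sfF$; applying in addition the central-difference operator $\sfV^n\mapsto\tfrac{\sfV^{n+1}-\sfV^{n-1}}{2\tau}$ shows that $\big(\tfrac{\sfA^{n+1}-\sfA^{n-1}}{2\tau}\big)_n$ solves it with source $\big(\tfrac{D^2_\tau\sfF^{n+1}-D^2_\tau\sfF^{n-1}}{2\tau}\big)_n$; and subtracting consecutive $\sfA$-equations and dividing by $\tau$ shows the velocity sequence $\big(\tfrac{\sfA^{n+1}-\sfA^n}{\tau}\big)_n$ solves a leapfrog scheme (shifted by half a time step) with source $\approx\dddot f$. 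All these sequences start from zero data since $f$ is supported away from zero (so that $\sfF^0=\sfF^1=0$ for $\tau$ small, which suffices as the bounds are of higher-order type, and~\eqref{eq:leapfrog} holds trivially also at $n=0$). The discrete damped-energy estimate of Section~\ref{section_stability_leapfrog} then bounds, for each such sequence $\sfW$ with source $\sfG$, the quantity $\sum_n\tau\big(\|\dot\sfW^{n+\frac12}\|_\Omega^2+\|\nabla\sfW^n\|_\Omega^2\big)e^{-2\rho t^n}$ by $\rho^{-2}\sum_n\tau\|\sfG^n\|_\Omega^2 e^{-2\rho t^n}$, and a Poincar\'e inequality in $\Omega$ (valid since $\sfW^n\in V_h\subset H^1_0(\Omega)$) converts the gradient bounds to $L^2$ bounds where needed. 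Finally, Taylor expansions with exact integral remainders, exactly as in the proof of Lemma~\ref{lem:etaf}, give $\sum_n\tau\|D^2_\tau\sfF^n\|_\Omega^2 e^{-2\rho t^n}\lesssim\int_0^{+\infty}\|\ddot f\|_\Omega^2 e^{-2\rho t}dt$ and $\sum_n\tau\big\|\tfrac{D^2_\tau\sfF^{n+1}-D^2_\tau\sfF^{n-1}}{2\tau}\big\|_\Omega^2 e^{-2\rho t^n}\lesssim\int_0^{+\infty}\|\dddot f\|_\Omega^2 e^{-2\rho t}dt$, the switch from the nodal weight $e^{-2\rho t^n}$ to $e^{-2\rho t}$ relying on $\rho\tau\le1$.

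\emph{Assembly and main obstacle.} Feeding the estimates of the previous step into the slab formulas and summing over $n$ yields~\eqref{eq:bnd_dot_eta}--\eqref{eq:bnd_dddwht}: the bound on $\dddot{w}_{h\tau}$ is the most direct, the bounds on $\ddeltaht$ follow from it through $\ddeltaht(t)=-\int_{t^n}^{t}\dddot{w}_{h\tau}(s)\,ds$ together with the $\nabla$-version of the stability estimate applied to the centered difference of $\sfA$, and~\eqref{eq:bnd_grad_eta} follows by combining the $-\tfrac{\tau^2}{12}\sfA^n$ contribution (producing the $\|\ddot f\|$ term) with the finite-difference contributions (producing the $\tau^2\|\dddot f\|$ term). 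The step I expect to be the main obstacle is keeping the power of $\tau$ sharp in the \emph{gradient} norms $\|\nabla\deltaht\|$ and $\|\nabla\ddeltaht\|$: passing through the inverse inequality $\|\nabla v_h\|_\Omega\lesssim h^{-1}\|v_h\|_\Omega$ loses a power of $\tau$ even after invoking the CFL condition~\eqref{eq:CFL}, so one must instead feed the correct differenced sequence — $\sfA$ for the $\ddot f$ contributions, and the centered time-difference of $\sfA$ (equivalently the discrete acceleration of the centered velocity $\tfrac{\sfU^{n+1}-\sfU^{n-1}}{2\tau}$, whose leapfrog source is $\approx\dddot f$) for the $\dddot f$ contributions — directly into the gradient part of the discrete damped-energy estimate. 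A secondary technicality is the bookkeeping near $n=0,1$ for the differenced schemes, which relies on the support assumption on $f$.
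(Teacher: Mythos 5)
Your overall strategy --- expanding $\deltaht$, its time derivatives, and $\dddot{w}_{h\tau}$ slab by slab in terms of finite differences of the acceleration sequence $\sfA$, observing that $\sfA$ and its time differences again solve leapfrog schemes with sources controlled by $\ddot f$ and $\dddot f$, and invoking the damped stability estimates of Section~\ref{section_stability_leapfrog} --- is exactly the paper's. Your identities for $\sfU^n-\alpha^n$, $\tfrac{\sfU^{n+1}-\sfU^{n-1}}{2\tau}-\beta^n$, $\vartheta^n$, $\epsilon^n$ are correct; your treatment of~\eqref{eq:bnd_grad_eta} (isolating the $-\tfrac{\tau^2}{12}\sfA^n$ term as the source of the $\|\ddot f\|$ contribution) and of~\eqref{eq:bnd_dddwht} matches the paper; and your forward-difference sequence $\big(\tfrac{1}{\tau}(\sfA^{n+1}-\sfA^n)\big)_n$ is precisely the paper's auxiliary sequence $\sfB^n=\tfrac{1}{\tau^3}(\sfUnp-3\sfUn+3\sfUnm-\sfUnmm)$, exploited with the same CFL-based device to pass from midpoint to nodal gradient values (note, though, that the centered difference of $\sfA$ does not start from zero data, so the stability lemma should be applied to $\sfB$ rather than to it).

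There is, however, one concrete error. The quantity $\ddeltaht$ in~\eqref{eq:bnd_dot_eta} and~\eqref{eq:bnd_grad_dot_eta} is the \emph{first} time derivative $\duht-\dwht$ (see the identity $\hddeltaht(s)=s\hdeltaht(s)$ used in the proof of Lemma~\ref{lem:low}, and the estimator $M$ in Section~\ref{section_numerical_results}), whereas the closed-form expression you write down, $\sfA^n-R(\sfA)(t)=\tfrac{d^2}{dt^2}\uht-\ddwht$, is the \emph{second} derivative; only the latter vanishes at the nodes and equals $-\int_{t^n}^t\dddot{w}_{h\tau}$. The distinction is quantitative: on $J_n$ the second derivative is of size $\tau\max\big(\|\dsfA\nmz\|_\Omega,\|\dsfA\nmzm\|_\Omega\big)$, so your route yields $\tau^2/\rho^2$ in~\eqref{eq:bnd_dot_eta} and~\eqref{eq:bnd_grad_dot_eta} instead of the stated $\tau^4/\rho^2$. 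The first derivative instead satisfies, for $t\in J_n$,
\begin{equation*}
\duht(t)-\dwht(t) \;=\; \frac{\tau^2}{12}\,\dsfA\nmzm \;-\; \vartheta^n\,\frac{(t-t^n)^2}{2} \;-\; \epsilon^n\,\frac{(t-t^n)^3}{6},
\end{equation*}
i.e., it does \emph{not} vanish at $t^n$ but is $\tau^2$ times a bounded combination of $\dsfA\nmz$ and $\dsfA\nmzm$. Once you substitute this expansion, your argument delivers the stated powers of $\tau$ and coincides in substance with the paper's proof.
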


\begin{proof}
Recall the definition~\eqref{eq:def_sfA} of $\sfA^n$ and set
$\sfA\nmz := \frac12(\sfA^{n+1}+\sfA^n)$, 
$\dsfA\nmz := \frac{1}{\tau}(\sfA^{n+1}-\sfA^n)$ for all $n\ge -1$.
A direct computation
shows that for all $t\in J_n$ and all $n\in\polN$,
\begin{align*}
\tau^{-2} \deltaht(t) = {}& \frac{1}{12}(\sfA\nmzm+\tau\dsfA\nmzm) - \frac{\tau}{12} 
\dsfA\nmzm \zeta^n_1(t) + \frac{\tau}{2}(\dsfA\nmz+\dsfA\nmzm) \zeta^n_3(t) \\
&+ \tau (\dsfA\nmz-\dsfA\nmzm) \zeta^n_4(t),\\
\tau^{-2} \ddeltaht(t) = {}& 
- \frac{1}{12} \dsfA\nmzm + \frac{1}{2}(\dsfA\nmz+\dsfA\nmzm) \zeta^n_2(t)+ (\dsfA\nmz-\dsfA\nmzm) \zeta^n_3(t), \\
\dddot{w}_{h\tau}(t) = {}& \frac{1}{2}(\dsfA\nmz+\dsfA\nmzm) + (\dsfA\nmz-\dsfA\nmzm) \zeta^n_1(t),
\end{align*}
where we used the shorthand notation $\zeta^n_m(t):=\frac{1}{m!}\frac{1}{\tau^m}(t-t^n)^m$ for all $m\in\{0,\ldots,4\}$. Notice that all of these polynomials are bounded on $J_n$.
Hence, we have
\begin{align}
\int_0^{+\infty} \|\ddeltaht(t)\|_\Omega^2 e^{-2\rho t}dt & \lesssim  
\tau^4 \sum_{n\in\polN} \big( \|\dsfA\nmzm\|_\Omega^2 + \|\dsfA\nmz\|_\Omega^2\big) \int_{J_n} e^{-2\rho t}dt \nonumber  \\
& \le \tau^4 \sum_{n\in\polN} \tau \big( \|\dsfA\nmzm\|_\Omega^2 + \|\dsfA\nmz\|_\Omega^2\big) e^{-2\rho t^n} \nonumber \\
& \lesssim \tau^4 \sum_{n\in\polN} \tau \|\dsfA\nmz\|_\Omega^2 e^{-2\rho t^n}, \label{eq:bnd1}
\end{align}
where we used that $\dsfA^{-\frac12}=0$ and $\rho \tau\le 1$ (see~\eqref{eq:ass_rho_dt}). 
Similarly, we have
\begin{subequations} \begin{align}
\int_0^{+\infty} \|\dddot{w}_{h\tau}(t)\|_\Omega^2 e^{-2\rho t}dt &\lesssim 
\sum_{n\in\polN} \tau \|\dsfA\nmz\|_\Omega^2 e^{-2\rho t^n}, \label{eq:bn2a}\\
\int_0^{+\infty} \|\nabla \deltaht(t)\|_\Omega^2 e^{-2\rho t}dt &\lesssim 
\tau^4 \sum_{n\in\polN} \tau \big( \|\nabla \sfA\nmz\|_\Omega^2 + \tau^2
\|\nabla \dsfA\nmz\|_\Omega^2\big) e^{-2\rho t^n}, \label{eq:bn2b}\\
\int_0^{+\infty} \|\nabla \ddeltaht(t)\|_\Omega^2 e^{-2\rho t}dt &\lesssim 
\tau^4 \sum_{n\in\polN} \tau \|\nabla \dsfA\nmz\|_\Omega^2 e^{-2\rho t^n}. \label{eq:bn2c}
\end{align} \end{subequations}
We can now invoke Corollary~\ref{cor:estim_sfA} (see 
Section~\ref{section_stability_leapfrog}) to conclude the proof.
Indeed, \eqref{eq:bnd_dot_eta} and \eqref{eq:bnd_dddwht}
readily follow from \eqref{eq:bnd_sfA},
\eqref{eq:bnd1}, and \eqref{eq:bn2a}. The estimate~\eqref{eq:bnd_sfA} can also 
be used to bound the first term on the right-hand side of~\eqref{eq:bn2b}. 
To bound the second term on the right-hand side of~\eqref{eq:bn2b} as well as
the right-hand side of~\eqref{eq:bn2c}, we invoke~\eqref{eq:bnd_sfB}, 
recalling that $\sfB^n := \frac{1}{\tau^3}(\sfUnp-3\sfUn+3\sfUnm-\sfUnmm)$,
$\sfB\nmz := \frac12(\sfB^{n+1}+\sfB^n)$, 
$\dsfB\nmz := \frac{1}{\tau}(\sfB^{n+1}-\sfB^n)$, so that $\sfB^{n+1}=\dsfA\nmz$
(notice that $\sfB^n$ is meant to approximate the third-order time-derivative).
To conclude, we observe that
\[
\|\nabla \dsfA\nmz\|_\Omega = \|\nabla \sfB^{n+1}\|_\Omega
\le \|\nabla \sfB\nmz\|_\Omega + \frac12 \tau \|\nabla\dsfB\nmz\|_\Omega
\le \|\nabla \sfB\nmz\|_\Omega + \frac12 (1-\mu_0)^{\frac12} \|\dsfB\nmz\|_\Omega,
\]
where we used~\eqref{eq:mht_L2} owing to the CFL condition. 
\end{proof}

\section{Asymptotically constant-free error upper bound}
\label{section_upper_bound}

Our goal is to bound the error $e=u-\wht$ (see~\eqref{eq:def_e}),
where $u$ is the exact solution and $\wht$ is the $C^2$-reconstruction
of the fully discrete solution defined in~\eqref{eq:defs_uw}.
The evolutionary PDE governing the error can be written as 
\begin{equation} \label{eq:PDE_error}
(\ddot e(t),v)_\Omega + (\nabla e(t),\nabla v)_\Omega = (\eta_f(t),v)_\Omega
+ \langle \calR(t),v\rangle \quad
\forall t\in J,\; \forall v\in V,
\end{equation}
where we introduced the data time-oscillation term $\eta_f\in C^0(J;L^2(\Omega))$ such that
\begin{equation} \label{eq:def:oscill}
\eta_f(t) := f(t)-f_\tau(t),
\end{equation}
and the residual $\calR(t) \in V'$ associated with~\eqref{eq:time_leapfrog} such that
\begin{equation} \label{eq:residual}
\langle \calR(t),v\rangle := (f_\tau(t),v)_\Omega - (\ddwht(t),v)_\Omega - (\nabla \wht(t),\nabla v)_\Omega \quad \forall v\in V,
\end{equation}
with the brackets denoting the duality pairing between $V'$ and $V$. The $\|{\cdot}\|_{V'}$-norm is defined by equipping $V$ with the $H^1$-seminorm, i.e., we set $\|\calR(t)\|_{V'} := \sup_{v\in V,\; \|\nabla v\|_\Omega=1} |\langle \calR(t),v\rangle|$.
The key consistency property we shall use for the residual is the following perturbed Galerkin orthogonality:
\begin{equation} \label{eq:Galerkin}
\langle \calR(t),v_h\rangle = (\nabla\deltaht(t),\nabla v_h)_\Omega \quad \forall v_h\in V_h,
\end{equation}
recalling from~\eqref{eq:def_deltaht} the time-reconstruction error defined as
$\deltaht(t) := \uht(t)-\wht(t)$. \rev{In the proofs below, it is useful to consider} the 
modified residual $\calR^\dagger(t) \in V'$ such that $\calR^\dagger(t):=\calR(t)+ \rev{\Delta} \deltaht(t)$, i.e.,
\begin{equation} \label{eq:residual_dag}
\langle \calRdag(t),v\rangle := (f_\tau(t),v)_\Omega - (\ddwht(t),v)_\Omega - (\nabla \uht(t),\nabla v)_\Omega \quad \forall v\in V,
\end{equation}
which satisfies the exact Galerkin orthogonality
\begin{equation} \label{eq:Galerkin_dag}
\langle \calRdag(t),v_h\rangle = 0 \quad \forall v_h\in V_h.
\end{equation}

In the frequency domain, using obvious notation, 
the error equation~\eqref{eq:PDE_error} becomes
\begin{equation} \label{eq:PDE_error_s}
b_s(\he(s),\hv) = (\hetaf(s),\hv)_\Omega + \langle  \hR(s),\hv\rangle \quad
\forall s\in\polC,\; \forall \hv\in \hV.
\end{equation}
We now derive two bounds on $\tnorm{\he(s)}$, respectively called low-frequency
and high-frequency bounds because in our final error estimate, the first bound will be used for $|s|\le \omega$ and the second bound for $|s|\ge\omega$, where $\omega>0$ is a cutoff frequency. 

\begin{lemma}[Low-frequency bound] \label{lem:low}
The following holds for all $s\in\polC$:
\begin{equation}
\tnorm{\he(s)}^2 \le \rev{\big( 1+40\gamma_s(h)^2\big)}  \|\hR(s)\|_{V'}^2 
+ \frac{\rev{18}}{\rho^2} \|\nabla \hddeltaht(s)\|_\Omega^2
+ \frac{9}{\rho^2} \|\hetaf(s)\|_\Omega^2,
\end{equation}
with the approximation factor $\gamma_s(h)$ defined in~\eqref{eq:def_gamma_s}.
\end{lemma}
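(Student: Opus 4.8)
The plan is to work entirely in the frequency domain, starting from the error identity~\eqref{eq:PDE_error_s}, and to bound separately the two pieces $|s|^2\|\he(s)\|_\Omega^2$ and $\|\nabla\he(s)\|_\Omega^2$ that make up $\tnorm{\he(s)}^2$. The $H^1$-piece will be handled by a direct test-function argument resting on the coercivity-type identity~\eqref{eq:stability}, whereas the $L^2$-piece requires the duality argument built on the adjoint solution $\hchi_g$ and the approximation factor $\gamma_s(h)$ of Section~\ref{sec:app_factor}; this is the step that circumvents the missing inf-sup stability, as in~\cite{Theo:23}. The genuinely new feature, relative to the space semi-discrete setting, is that the residual $\hR$ enjoys only the perturbed orthogonality~\eqref{eq:Galerkin}, so the time-reconstruction discrepancy $\deltaht$ must be allowed to appear in both bounds.

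\emph{Energy piece.} Test~\eqref{eq:PDE_error_s} with $\hv=\he(s)$, take real parts, and use $\nu^2\le|s|^2$ to discard the possibly negative term $\Re(s^2)\|\he\|_\Omega^2=(\rho^2-\nu^2)\|\he\|_\Omega^2$; this gives $\|\nabla\he\|_\Omega^2\le|s|^2\|\he\|_\Omega^2+|(\hetaf,\he)_\Omega|+|\langle\hR,\he\rangle|$. Bounding $|\langle\hR,\he\rangle|\le\|\hR\|_{V'}\|\nabla\he\|_\Omega$, absorbing the gradient on the left-hand side by Young's inequality, and bounding $|(\hetaf,\he)_\Omega|\le\|\hetaf\|_\Omega\|\he\|_\Omega$ with $\|\he\|_\Omega\le\tnorm{\he}/|s|\le\tnorm{\he}/\rho$, one reaches an inequality of the form $\tnorm{\he}^2\lesssim|s|^2\|\he\|_\Omega^2+\|\hR\|_{V'}^2+\rho^{-2}\|\hetaf\|_\Omega^2$, in which the constant in front of $\|\hR\|_{V'}^2$ is not degraded by the damping.

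\emph{Duality piece.} Pick $\hg\in L^2(\Omega;\polC)$ with $|s|\|\hg\|_\Omega=1$ realising $|s|\|\he\|_\Omega=|s|^2|(\he,\hg)_\Omega|$ (nothing to prove if $\he(s)=0$), and let $\hchi_g\in\hV$ solve the adjoint problem of Section~\ref{sec:app_factor}. Testing~\eqref{eq:PDE_error_s} with $\hv=\hchi_g$ and using the adjoint equation, $|s|^2(\he,\hg)_\Omega=b_s(\he,\hchi_g)=(\hetaf,\hchi_g)_\Omega+\langle\hR,\hchi_g\rangle$. The first term is controlled by $\|\hetaf\|_\Omega\|\hchi_g\|_\Omega\le\rho^{-1}\|\hetaf\|_\Omega$, using the adjoint stability bound $\tnorm{\hchi_g}\le|s|/\rho$ (obtained from~\eqref{eq:stability} exactly as in the proof of Lemma~\ref{lem:bnd_gamma}). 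For the residual term, let $\hv_h\in\hV_h$ attain the minimum in~\eqref{eq:def_gamma_s} and split $\langle\hR,\hchi_g\rangle=\langle\hR,\hchi_g-\hv_h\rangle+(\nabla\hdeltaht,\nabla\hv_h)_\Omega$ via~\eqref{eq:Galerkin} — equivalently, write $\hR=\calRdag-\Delta\deltaht$ and use the exact orthogonality~\eqref{eq:Galerkin_dag} for $\calRdag$. With $\|\nabla(\hchi_g-\hv_h)\|_\Omega\le\gamma_s(h)$ and $\|\nabla\hv_h\|_\Omega\le\gamma_s(h)+\|\nabla\hchi_g\|_\Omega\le\gamma_s(h)+|s|/\rho$, this produces the term $\gamma_s(h)\|\hR\|_{V'}$ together with terms proportional to $\|\nabla\hdeltaht(s)\|_\Omega$. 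Finally, one re-expresses $\|\nabla\hdeltaht(s)\|_\Omega$ through the second time-derivative: since $\deltaht(0)=0$ and the first time-derivative of $\deltaht$ vanishes at $t=0$ (both following from $\sfU^0=\sfU^1=0$ and the definitions of $\uht$, $\wht$), the Laplace transforms satisfy $\hddeltaht(s)=s^2\hdeltaht(s)$ up to the broken-derivative corrections at the time nodes; together with $|s|\ge\rho$ this lets one bound $|s|\|\he\|_\Omega$ by $\gamma_s(h)\|\hR\|_{V'}$, a data-oscillation term, and a $\rho$-weighted norm of $\nabla\hddeltaht(s)$.

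\emph{Conclusion and main obstacle.} Substituting the duality estimate for $|s|\|\he\|_\Omega$ into the energy estimate and expanding the resulting square with weighted Young inequalities, the weights being tuned to deliver exactly the constants $1+40\gamma_s(h)^2$, $18/\rho^2$, $9/\rho^2$, yields the claim; since the estimate is pointwise in $s$, the cutoff-frequency role of $\gamma_s(h)$ is immaterial here. The heart of the argument — and the point where the fully discrete analysis departs from~\cite{Theo:23} — is the duality piece: one must follow how the non-Galerkin-orthogonal contribution $(\nabla\hdeltaht,\nabla\hv_h)_\Omega$ flows through the adjoint argument and, after the substitution $\hdeltaht\mapsto\hddeltaht$ and the use of $|s|\ge\rho$, collapses into the clean term $\rho^{-2}\|\nabla\hddeltaht(s)\|_\Omega^2$ rather than into something that would blow up at high frequencies. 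The passage from $\hdeltaht$ to $\hddeltaht$ needs some care (recall $\deltaht$ is only of class $C^0$ in time, so $\ddeltaht$ is taken in the broken sense); the remaining ingredients — the energy test, the adjoint stability bound, and the constant bookkeeping — are routine.
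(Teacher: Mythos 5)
Your overall architecture is the same as the paper's: a duality bound for $|s|\|\he(s)\|_\Omega$ via the adjoint solution and the approximation factor $\gamma_s(h)$, with the perturbed Galerkin orthogonality~\eqref{eq:Galerkin} generating the $\deltaht$ contribution; a direct test with $\hv=\he(s)$ plus the identity $\|\nabla \hv\|_\Omega^2=b_s(\hv,\hv)-s^2\|\hv\|_\Omega^2$ for the gradient part; and a weighted-Young combination at the end. The only cosmetic difference is that you pair $(\nabla\hdeltaht,\nabla\hv_h)_\Omega$ with the discrete test function and keep $\hR$ in the best-approximation term, whereas the paper keeps $(\nabla\hdeltaht,\nabla\hchi_e)_\Omega$ and passes to $\hRdag$ (then undoes this with $\|\hRdag(s)\|_{V'}\le\|\hR(s)\|_{V'}+\|\nabla\hdeltaht(s)\|_\Omega$ and $\gamma_s(h)\le |s|/\rho$); both routes produce the same terms.

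There is, however, one step that would fail as written: your conversion of $\|\nabla\hdeltaht(s)\|_\Omega$ into the term appearing in the statement. You invoke ``$\hddeltaht(s)=s^2\hdeltaht(s)$ up to the broken-derivative corrections at the time nodes,'' i.e.\ you read $\ddeltaht$ as the \emph{second} time derivative. In the paper's notation $\ddeltaht=\tfrac{d}{dt}\deltaht$ is the \emph{first} (broken) derivative, and the identity actually used is $\hddeltaht(s)=s\hdeltaht(s)$, which holds \emph{exactly} --- with no correction terms --- precisely because $\deltaht=\uht-\wht\in C^0(J;V_h)$ and $\deltaht(0)=0$ (from $\sfU^0=\sfU^1=0$). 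This single power of $s$ is what turns $\tfrac{|s|}{\rho}\|\nabla\hdeltaht(s)\|_\Omega$ into $\tfrac{1}{\rho}\|\nabla\hddeltaht(s)\|_\Omega$ and yields the $\rho^{-2}$-weighted squared term of the lemma. If you genuinely tried to use the second derivative, the jumps of $\dot\deltaht$ at the nodes (present, since $\uht$ is only $C^0$) would contribute Dirac masses to the distributional second derivative, so $s^2\hdeltaht(s)$ would \emph{not} equal the transform of the broken second derivative, and the ``corrections'' you wave at would not be controllable; moreover the resulting weight would be $\rho^{-4}$ rather than $\rho^{-2}$, so you would not recover the stated bound. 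Once this identification is corrected, the rest of your argument closes, up to the constant bookkeeping that you leave implicit and that the paper carries out explicitly (the splitting $\Upsilon^2\le \tfrac{2}{\rho^2}\|\hetaf\|_\Omega^2+\tfrac{2}{\rho^2}\|\nabla\hddeltaht\|_\Omega^2+10\gamma_s(h)^2\|\hR\|_{V'}^2+\tfrac{5}{2\rho^2}\|\nabla\hddeltaht\|_\Omega^2$ after $\tnorm{\he(s)}^2\le\|\hR(s)\|_{V'}^2+\tfrac{1}{\rho^2}\|\hetaf(s)\|_\Omega^2+4\Upsilon^2$).
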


\begin{proof}
We need to bound $|s|\|\he(s)\|_\Omega$ and $\|\nabla \he(s)\|_\Omega$ for all $s\in\polC$.

(1) Bound on $|s|\|\he(s)\|_\Omega$.
Let $\hchi_e(s) \in \hV$ solve the adjoint problem
$b_s(\hw,\hchi_e(s)) = |s|^2(\hw,\he(s))_\Omega$ for all $\hw\in \hV$. 
The stability property \eqref{eq:stability} readily implies that
\[
\rho\tnorm{\hchi_e(s)}^2 = \Re\big( b_s(\hchi_e(s),s\hchi_e(s))\big) 
= \Re\big(\bar{s}|s|^2(\hchi_e(s),\he(s))_\Omega\big) \le |s|^2 \tnorm{\hchi_e(s)} \|\he(s)\|_\Omega,
\] 
so that
\[
\tnorm{\hchi_e(s)} \le \frac{|s|}{\rho} |s| \|\he(s)\|_\Omega.
\]
Moreover, testing \eqref{eq:PDE_error_s} with $\hv:=\hchi_e(s)$ gives
\begin{align*}
|s|^2\|\he(s)\|_\Omega^2 &= b_s(\he(s),\hchi_e(s)) = 
(\hetaf(s),\hchi_e(s))_\Omega + \langle  \hR(s),\hchi_e(s)\rangle\\
&= (\hetaf(s),\hchi_e(s))_\Omega + (\nabla \hdeltaht(s),\nabla \hchi_e(s))_\Omega 
+ \langle  \hRdag(s),\hchi_e(s)-\hv_h(s)\rangle,
\end{align*}
where we introduced the Laplace-transformed 
modified residual, $\hRdag(s)$, and we
exploited the exact Galerkin orthogonality property \eqref{eq:Galerkin_dag}
to introduce in the rightmost term an arbitrary discrete function $\hv_h(s)\in \hV_h$
for all $s\in\polC$. Owing to the Cauchy--Schwarz inequality and
the above bound on $\tnorm{\hchi_e(s)}$, we infer that
\begin{align*}
\big| (\hetaf(s),\hchi_e(s))_\Omega + (\nabla \hdeltaht(s),\nabla \hchi_e(s))_\Omega\big| & \le \big( \|\hetaf(s)\|_\Omega^2 + \|\nabla \hddeltaht(s)\|_\Omega^2\big)^{\frac12} \frac{1}{|s|} \tnorm{\hchi_e(s)} \\
& \le \big( \|\hetaf(s)\|_\Omega^2 + \|\nabla \hddeltaht(s)\|_\Omega^2\big)^{\frac12} \frac{1}{\rho} |s| \|\he(s)\|_\Omega.
\end{align*}
Here, we used that $\hddeltaht(s)=s\hdeltaht(s)$. Moreover,
recalling the approximation factor $\gamma_s(h)$ defined
in~\eqref{eq:def_gamma_s} and since $\hv_h(s)$ is arbitrary
in $\hV_h$, we have
\begin{align*}
\inf_{\hv_h(s)\in \hV_h} \big| \langle  \hRdag(s),\hchi_e(s)-\hv_h(s)\rangle \big| 
& \le \gamma_s(h) \|\hRdag(s)\|_{V'} |s| \|\he(s)\|_\Omega.
\end{align*}
Putting the above bounds together gives
\[
|s|\|\he(s)\|_\Omega \le \frac{1}{\rho} \big( \|\hetaf(s)\|_\Omega^2 + \|\nabla \hddeltaht(s)\|_\Omega^2\big)^{\frac12} + \gamma_s(h) \|\hRdag(s)\|_{V'}.
\]
\rev{Since $\|\hRdag(s)\|_{V'} \le \|\hR(s)\|_{V'} + \|\nabla\hdeltaht(s)\|_\Omega$ by the triangle
inequality and since $\gamma_s(h) \le \frac{|s|}{\rho}$ owing to Lemma~\ref{lem:bnd_gamma},
we infer that
\[
|s|\|\he(s)\|_\Omega \le \frac{1}{\rho} \big( \|\hetaf(s)\|_\Omega^2 + \|\nabla \hddeltaht(s)\|_\Omega^2\big)^{\frac12} + \gamma_s(h) \|\hR(s)\|_{V'} + \frac{1}{\rho}\|\nabla \hddeltaht(s)\|_\Omega^2 =: \Upsilon.
\]}

(2) Bound on $\|\nabla \he(s)\|_\Omega$.
Since $\|\nabla \hv\|_\Omega^2 = b_s(\hv,\hv)-s^2\|\hv\|_\Omega^2$ for all $\hv\in \hV$, we
have
\[
\|\nabla \he(s)\|_\Omega^2 \le |b_s(\he(s),\he(s))| + \Upsilon^2,
\]
and it remains to bound the first term on the right-hand side.
Testing \eqref{eq:PDE_error_s} with $\hv:=\he(s)$ gives
\[
b_s(\he(s),\he(s)) = (\hetaf(s),\he(s))_\Omega + \langle  \hR(s),\he(s)\rangle.
\]
Using the Cauchy--Schwarz inequality and $\rho\|\he(s)\|_\Omega
\le |s|\|\he(s)\|_\Omega \le \Upsilon$, this implies that
\[
|b_s(\he(s),\he(s))| \le \|\hR(s)\|_{V'} \|\nabla \he(s)\|_\Omega + \frac{1}{\rho}\|\hetaf(s)\|_\Omega \Upsilon.
\] 
Putting the above bounds together gives
\[
\|\nabla \he(s)\|_\Omega^2 \le \|\hR(s)\|_{V'} \|\nabla \he(s)\|_\Omega + \frac{1}{\rho}\|\hetaf(s)\|_\Omega \Upsilon + \Upsilon^2.
\]
Using Young's inequality and re-arranging the terms, we conclude that
\[
\|\nabla \he(s)\|_\Omega^2 \le \|\hR(s)\|_{V'}^2 + \frac{1}{\rho^2}\|\hetaf(s)\|_\Omega^2 + 3\Upsilon^2.
\]

(3) Bound on $\tnorm{\he(s)}^2$. Combining the bounds from Steps (1) and (2) yields
\[
\tnorm{\he(s)}^2 \le \|\hR(s)\|_{V'}^2 + \frac{1}{\rho^2}\|\hetaf(s)\|_\Omega^2 + 4\Upsilon^2.
\]
The claim follows from $\Upsilon^2\le \frac{2}{\rho^2}\|\hetaf(s)\|_\Omega^2 + \frac{2}{\rho^2}\|\nabla \hddeltaht(s)\|_\Omega^2+ 2\rev{\big(5\gamma_s(h)^2 \|\hR(s)\|_{V'}^2 + \frac54\frac{1}{\rho^2}\|\nabla \hddeltaht(s)\|_\Omega^2\big)}$ \rev{and re-arranging the terms}.
\end{proof}

\begin{lemma}[High-frequency bound] \label{lem:high}
The following holds for all $s\in\polC$ and for all 
$r\ge0$ such that $f\in C^r_{\rm b}(J;L^2(\Omega))$:
\begin{equation}
\tnorm{\he(s)} \le \frac{1}{\rho} \big( \|\nabla \hddeltaht(s)\|_\Omega^2 + \|\hetaf(s)\|_\Omega^2\big)^{\frac12} + \frac{2}{\rho} \frac{1}{|s|^r} \|\hfr(s)\|_\Omega.
\end{equation}
\end{lemma}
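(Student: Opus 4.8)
The plan is to use the splitting $e=u-\wht$ and to bound $\tnorm{\hu(s)}$ and $\tnorm{\widehat{\wht}(s)}$ separately, then combine by the triangle inequality for the frequency-domain energy norm $\tnorm{\cdot}$. For the exact solution, testing the frequency-domain problem~\eqref{eq:PDE_freq} with $\hv:=s\hu(s)$ and invoking the stability identity~\eqref{eq:stability} yields $\tnorm{\hu(s)}\le\tfrac1\rho\|\hf(s)\|_\Omega$ (this is precisely~\eqref{eq:key_stab_freq}). Since $f$ is supported away from zero, $f$ and all its time derivatives vanish at $t=0$, so the Laplace transform of $f^{(r)}$ is $s^r\hf(s)$, whence $\|\hf(s)\|_\Omega=|s|^{-r}\|\hfr(s)\|_\Omega$; the assumption $f\in C^r_{\rm b}(J;L^2(\Omega))$ guarantees that $\hfr(s)$ is well defined for $\Re(s)=\rho>0$. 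Thus $\tnorm{\hu(s)}\le\tfrac1\rho\,|s|^{-r}\|\hfr(s)\|_\Omega$.

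For the discrete part --- and here, in contrast with the low-frequency bound of Lemma~\ref{lem:low}, no duality/approximation-factor argument is needed --- I would pass to the frequency-domain form of the time-reconstructed leapfrog equation~\eqref{eq:time_leapfrog}. The initial conditions $\sfU^0=\sfU^1=0$ force $\wht(0)=\dwht(0)=0$ and $\deltaht(0)=\uht(0)-\wht(0)=0$ (inspect the coefficients in~\eqref{eq:def_L_bis}, and note $\uht(0)=\sfU^0$ from~\eqref{eq:def_R}), so the Laplace transform of $\ddwht$ is $s^2\widehat{\wht}(s)$ and $\hddeltaht(s)=s\hdeltaht(s)$. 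Using $\widehat{\uht}(s)=\widehat{\wht}(s)+\hdeltaht(s)$, equation~\eqref{eq:time_leapfrog} becomes $b_s(\widehat{\wht}(s),v_h)=(\widehat{f_\tau}(s),v_h)_\Omega-(\nabla\hdeltaht(s),\nabla v_h)_\Omega$ for all $v_h\in\hV_h$. Since $\wht$ is $V_h$-valued, $\widehat{\wht}(s)\in\hV_h$, so it is legitimate to test with $v_h:=s\widehat{\wht}(s)$; the stability identity~\eqref{eq:stability} then gives $\rho\tnorm{\widehat{\wht}(s)}^2=\Re\big(b_s(\widehat{\wht}(s),s\widehat{\wht}(s))\big)$, and a single Cauchy--Schwarz inequality treating the two right-hand-side contributions \emph{jointly} (using $|s|\,\|\nabla\hdeltaht(s)\|_\Omega=\|\nabla\hddeltaht(s)\|_\Omega$ and $|s|^2\|\widehat{\wht}(s)\|_\Omega^2+\|\nabla\widehat{\wht}(s)\|_\Omega^2=\tnorm{\widehat{\wht}(s)}^2$) yields $\tnorm{\widehat{\wht}(s)}\le\tfrac1\rho\big(\|\widehat{f_\tau}(s)\|_\Omega^2+\|\nabla\hddeltaht(s)\|_\Omega^2\big)^{1/2}$. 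Finally, $\widehat{f_\tau}=\hf-\hetaf$ together with the elementary inequality $\sqrt{(a+b)^2+c^2}\le a+\sqrt{b^2+c^2}$ (for $a,b,c\ge0$) gives $\tnorm{\widehat{\wht}(s)}\le\tfrac1\rho\|\hf(s)\|_\Omega+\tfrac1\rho\big(\|\hetaf(s)\|_\Omega^2+\|\nabla\hddeltaht(s)\|_\Omega^2\big)^{1/2}$.

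Adding the two bounds and substituting $\|\hf(s)\|_\Omega=|s|^{-r}\|\hfr(s)\|_\Omega$ once more reproduces exactly the claimed estimate, the factor $2$ coming from the two occurrences of $\|\hf(s)\|_\Omega$. The Cauchy--Schwarz and Young-type manipulations are routine; I expect the only delicate point to be the Laplace-transform bookkeeping near $t=0$, namely verifying that $u$, $\wht$, $\deltaht$, $f$ and enough of its time derivatives all vanish at the origin so that time differentiation turns into multiplication by $s$ with no boundary terms. This is exactly where the hypotheses that $f$ is supported away from zero and that $\sfU^0=\sfU^1=0$ are used, and it is what keeps the bound asymptotically constant-free. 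A secondary subtlety, easily missed, is that the estimate on $\tnorm{\widehat{\wht}(s)}$ must be obtained by combining the two source contributions \emph{before} applying Cauchy--Schwarz rather than bounding each separately; otherwise one does not recover the sharp term $\big(\|\hetaf(s)\|_\Omega^2+\|\nabla\hddeltaht(s)\|_\Omega^2\big)^{1/2}$ appearing in the statement.
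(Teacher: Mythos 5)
Your proposal is correct and follows essentially the same route as the paper: triangle inequality $\tnorm{\he(s)}\le\tnorm{\hu(s)}+\tnorm{\hwht(s)}$, the a priori bound \eqref{eq:key_stab_freq} combined with $\|\hf(s)\|_\Omega=|s|^{-r}\|\hfr(s)\|_\Omega$ for the first term, and the stability identity \eqref{eq:stability} applied to the Laplace transform of \eqref{eq:time_leapfrog} with test function $s\hwht(s)$, a joint Cauchy--Schwarz step, and the inequality $((a+b)^2+c^2)^{1/2}\le a+(b^2+c^2)^{1/2}$ for the second. The only cosmetic difference is that you keep $\widehat{f_\tau}$ intact before splitting it as $\hf-\hetaf$, whereas the paper splits from the outset; both lead to the same estimate.
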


\begin{proof}
The triangle inequality gives
\[
\tnorm{\he(s)} \le \tnorm{\hu(s)} + \tnorm{\hwht(s)},
\]
and we bound the two terms on the right-hand side. 

(1) Owing to~\eqref{eq:key_stab_freq}, we infer that
\[
\tnorm{\hu(s)} \le \frac{1}{\rho} \|\hf(s)\|_\Omega = \frac{1}{\rho} \frac{1}{|s|^r} \|\hfr(s)\|_\Omega,
\]
where the last equality follows by invoking the smoothness of the source term $f$ in time. 

(2) Taking the Laplace transform of \eqref{eq:time_leapfrog} and re-organizing the terms gives
\[
b_s(\hwht(s),\hv) = (\hf(s),\hv)_\Omega - (\hetaf(s),\hv)_\Omega - (\nabla \hdeltaht(s),\nabla \hv)_\Omega\quad \forall s\in\polC,\; \forall \hv\in \hV.
\]
Owing to the stability property \eqref{eq:stability}, we infer that
\begin{align}
\rho \tnorm{\hwht(s)}^2 &= \Re\big((\hf(s),s\hwht(s))_\Omega - (\hetaf(s),s\hwht(s))_\Omega - (\nabla \hdeltaht(s),s\nabla \hwht(s))_\Omega \big)\nonumber \\
&\le \Big\{ \big(\|\hf(s)\|_\Omega + \|\hetaf(s)\|_\Omega\big)^2
+ \|\nabla \hddeltaht(s)\|_\Omega^2  \Big\}^{\frac12} \tnorm{\hwht(s)}. \label{eq:bnd_tnorm_w2}
\end{align}
Bounding $\|\hf(s)\|_\Omega$ as in Step~(1) and using that $((a+b)^2+c^2)^{\frac12}\le a+(b^2+c^2)^{\frac12}$ for nonnegative real numbers $a,b,c$, we infer that
\[
\tnorm{\hwht(s)} \le \frac{1}{\rho} \frac{1}{|s|^r} \|\hfr(s)\|_\Omega
+ \frac{1}{\rho} \big( \|\hetaf(s)\|_\Omega^2 + \|\nabla \hddeltaht(s)\|_\Omega^2\big)^{\frac12}.
\]
Putting the above two bounds together proves the claim.
\end{proof}

\begin{theorem}[Error upper bound] \label{thm:upper}
Let the error $e\in C^2(J;V)$ be defined in~\eqref{eq:def_e}
and recall the definition~\eqref{eq:def_calE_rho} of the damped energy norm.
Let the functions $\eta_f(t)\in L^2(\Omega)$ and $\deltaht(t)\in V_h$ 
be defined in~\eqref{eq:def:oscill} and \eqref{eq:def_deltaht}, respectively,
and let the \rev{residual
$\calR(t)\in V'$ be defined in~\eqref{eq:residual}}.  
Let the cutoff frequency $\omega>0$ and the parameter $\rho>0$ be fixed,
and let the approximation factor $\gamma_{\rho,\omega}(h)$ be defined 
in~\eqref{eq:def_gamma_rho_omega}.
Let $r\ge0$ be such that $f \in C^r_{\rm b}(J;L^2(\Omega))$.
The following holds:
\begin{align}
\calE_\rho^2(e)
\le {}& \int_0^{+\infty} \Bigg\{\|\calR(t)\|_{V'}^2 + \rev{\frac{20}{\rho^2} \|\nabla \ddeltaht(t)\|_\Omega^2} \nonumber \\
& + \rev{40\gamma_{\rho,\omega}(h)^2\|\rev{\calR}(t)\|_{V'}^2}
+ \frac{11}{\rho^2} \|\eta_f(t)\|_\Omega^2 
+ \frac{8}{\rho^2} \frac{1}{\omega^{2r}} \|f^{(r)}(t)\|_\Omega^2\Bigg\} e^{-2\rho t} dt. \label{eq:error_upper_bnd}
\end{align}
\end{theorem}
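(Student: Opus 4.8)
The plan is to pass to the frequency domain, split the integral over the vertical line $\Re(s)=\rho$ into a low-frequency part and a high-frequency part, invoke Lemma~\ref{lem:low} and Lemma~\ref{lem:high} on each part respectively, and then return to the time domain by means of the Plancherel-type identity~\eqref{eq:identity}. First I would record that $e=u-\wht$ has vanishing initial data: the initial conditions $\sfU^0=\sfU^1=0$ make the coefficients $\alpha^0$ and $\beta^0$ in~\eqref{eq:def_L_bis} vanish, so that $\wht(0)=\dot\wht(0)=0$, and together with~\eqref{eq:IC} this gives $e(0)=\dot e(0)=0$; similarly $\deltaht(0)=\uht(0)-\wht(0)=0$. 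Hence $\widehat{\dot e}(s)=s\he(s)$ and $\hddeltaht(s)=s\hdeltaht(s)$, and applying~\eqref{eq:identity} to $\dot e$ and to $\nabla e$ (componentwise) yields
\[
\calE_\rho^2(e) = \int_{\rho-\sfi\infty}^{\rho+\sfi\infty} \tnorm{\he(s)}^2\, ds .
\]

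Next I would split this integral according to $|\Im(s)|\le\omega$ and $|\Im(s)|>\omega$. On the low-frequency set, $\gamma_s(h)\le\gamma_{\rho,\omega}(h)$ by the definition~\eqref{eq:def_gamma_rho_omega}, so Lemma~\ref{lem:low} gives
\[
\tnorm{\he(s)}^2 \le \bigl(1+40\gamma_{\rho,\omega}(h)^2\bigr)\|\hR(s)\|_{V'}^2 + \frac{18}{\rho^2}\|\nabla\hddeltaht(s)\|_\Omega^2 + \frac{9}{\rho^2}\|\hetaf(s)\|_\Omega^2 .
\]
On the high-frequency set, Lemma~\ref{lem:high} applies with the given $r$ (since $f\in C^r_{\rm b}(J;L^2(\Omega))$); squaring its right-hand side with $(a+b)^2\le 2a^2+2b^2$ and using that $|s|\ge|\Im(s)|>\omega$, so that $|s|^{-2r}\le\omega^{-2r}$, gives
\[
\tnorm{\he(s)}^2 \le \frac{2}{\rho^2}\|\nabla\hddeltaht(s)\|_\Omega^2 + \frac{2}{\rho^2}\|\hetaf(s)\|_\Omega^2 + \frac{8}{\rho^2\omega^{2r}}\|\hfr(s)\|_\Omega^2 .
\]
Since every term on the two right-hand sides is nonnegative, I would enlarge each of the two restricted frequency integrals to the whole line $\Re(s)=\rho$ and add the results. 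The coefficient of $\int\|\nabla\hddeltaht\|_\Omega^2$ then becomes $\tfrac{18}{\rho^2}+\tfrac{2}{\rho^2}=\tfrac{20}{\rho^2}$, that of $\int\|\hetaf\|_\Omega^2$ becomes $\tfrac{9}{\rho^2}+\tfrac{2}{\rho^2}=\tfrac{11}{\rho^2}$, while the $\|\hR\|_{V'}^2$ and $\|\hfr\|_\Omega^2$ terms retain the coefficients $1+40\gamma_{\rho,\omega}(h)^2$ and $\tfrac{8}{\rho^2\omega^{2r}}$.

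Finally I would invoke~\eqref{eq:identity} in the reverse direction, term by term, with $Y=V'$ for $\calR$ and $Y=L^2(\Omega)$ for $\eta_f$, for each component of $\nabla\ddeltaht$, and for $f^{(r)}$, using that $\nabla\hddeltaht(s)$ is the Laplace transform of $\nabla\ddeltaht$ (because $\deltaht(0)=0$) and that $\hfr(s)$ is the Laplace transform of $f^{(r)}$; this turns the frequency integrals into the weighted time integrals appearing in~\eqref{eq:error_upper_bnd} and concludes the proof. The preliminary point to check is that $\calR$, $\eta_f$, $\nabla\ddeltaht$ and $f^{(r)}$ admit well-defined Laplace transforms on $\Re(s)=\rho$, which follows from $f\in C^r_{\rm b}(J;L^2(\Omega))$, the boundedness of the time reconstructions (Lemma~\ref{lem:stab}), and $\rho>0$. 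I do not expect any genuine obstacle here: all the substance sits in the two preparatory lemmas, and the only steps requiring care are justifying the ``split, then enlarge each piece to the full line'' manipulation (which is legitimate precisely because the bounding terms are nonnegative) and keeping track of the additive constants $18+2=20$ and $9+2=11$.
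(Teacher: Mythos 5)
Your proposal is correct and follows essentially the same route as the paper's proof: convert to the frequency domain via~\eqref{eq:identity}, split at the cutoff $\omega$, apply Lemma~\ref{lem:low} and Lemma~\ref{lem:high} (squared) on the respective pieces, enlarge each piece to the full line, and sum the constants $18+2=20$ and $9+2=11$. The only (immaterial) deviation is that you split according to $|\Im(s)|\le\omega$ rather than $|s|\le\omega$; both choices are compatible with the definition of $\gamma_{\rho,\omega}(h)$ and with the bound $|s|^{-2r}\le\omega^{-2r}$ on the complementary set.
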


\begin{proof}
Owing to the identity~\eqref{eq:identity}, we infer that
\[
\calE_\rho^2(e) = \int_0^{+\infty} \big\{ \|\dot e(t)\|_\Omega^2 + \|\nabla e(t)\|_\Omega^2 \big\} e^{-2\rho t}dt
= \int_{\rho-\sfi\infty}^{\rho+\sfi\infty} \tnorm{\he(s)}_\Omega^2 ds.
\]
We split the integral on the right-hand side depending on whether
$|s|\le \omega$ or $|s|\ge \omega$. Owing to Lemma~\ref{lem:low}
and the definition of $\gamma_{\rho,\omega}(h)$, we have
\begin{align*}
\int_{\rho-\sfi\infty}^{\rho+\sfi\infty} \tnorm{\he(s)}_\Omega^2 1_{\{|s|\le \omega\}} ds
\le {}& \int_{\rho-\sfi\infty}^{\rho+\sfi\infty} \Bigg\{ \rev{\big(1+40\gamma_{\rho,\omega}(h)^2\big)}
\|\hR(s)\|_{V'}^2  \\
& + \frac{\rev{18}}{\rho^2} \|\nabla \hddeltaht(s)\|_\Omega^2 + \frac{9}{\rho^2} \|\hetaf(s)\|_\Omega^2 \Bigg\} ds,
\end{align*}
where $1_A$ denotes the characteristic function of the subset $A\subset \polC$.
Moreover, invoking Lemma~\ref{lem:high}, we infer that
\begin{align*}
\int_{\rho-\sfi\infty}^{\rho+\sfi\infty} \tnorm{\he(s)}_\Omega^2 1_{\{|s|\ge \omega\}} ds
& \le \int_{\rho-\sfi\infty}^{\rho+\sfi\infty} \Big\{ \frac{2}{\rho^2} \|\nabla \hddeltaht(s)\|_\Omega^2 + \frac{2}{\rho^2} \|\hetaf(s)\|_\Omega^2 + \frac{8}{\rho^2} \frac{1}{\omega^{2r}} \|\hfr(s)\|_\Omega^2 \Big\} ds.
\end{align*}
Putting the above two bounds together proves the assertion. 
\end{proof}

\begin{remark}[Theorem~\ref{thm:upper}]
The estimate~\eqref{eq:error_upper_bnd} bounds the damped energy norm of the error
(recall that the damping parameter $\rho$ is typically proportional to the reciprocal
of the simulation time $T_*$) in terms of the dual norm of the residual, $\calR$
(representative of the space discretization error), the time-reconstruction error, 
$\deltaht$ (representative of the time discretization error), the data time-oscillation term,
$\eta_f$, and a term depending on higher-order time-derivatives of $f$.
\end{remark}

\begin{remark}[\rev{Higher-order terms}] \label{rem:hot_up}
\rev{The error upper bound~\eqref{eq:error_upper_bnd} fits the form~\eqref{eq:outline_thm12} if one sets
\[
\textup{h.o.t.} := \int_0^{+\infty} \Bigg\{\rev{40}\gamma_{\rho,\omega}(h)^2\|\rev{\calR}(t)\|_{V'}^2
+ \frac{11}{\rho^2} \|\eta_f(t)\|_\Omega^2 
+ \frac{8}{\rho^2} \frac{1}{\omega^{2r}} \|f^{(r)}(t)\|_\Omega^2\Bigg\} e^{-2\rho t} dt.
\]
Let us motivate that the three terms on the right-hand side can be considered as higher-order terms.
First, we observe
that the data time-oscillation term $\eta_f$ converges to third-order in $\tau$ owing to
Lemma~\ref{lem:etaf} provided $f \in C^3_{\rm b}(J;L^2(\Omega))$. Concerning the other two
terms on the right-hand side, we adapt the arguments of \cite[Corollary~5.2]{Theo:23}.
We first set the cutoff frequency as
\begin{equation*}
\omega^2 = \left (
\left (\frac{\ell_\Omega}{h}\right )^{\frac{\theta}{2}}(\rho\ell_\Omega)^\beta -1
\right ) \rho^2,
\end{equation*}
where the exponent $\beta\ge0$ will be chosen later on (the above right-hand side is positive if $h$ is small enough).
Owing to the bound~\eqref{eq:bnd_gamma} on $\gamma_{s}(h)$, we obtain
\begin{equation} \label{eq:bnd_rho1}
\gamma_{\rho,\omega}(h)
\leq 2C_{\mathrm{app}} C_{\mathrm{ell}} (\rho\ell_\Omega)^{1-\beta} \left (\frac{h}{\ell_\Omega}\right )^{\frac{\theta}{2}}.
\end{equation}
Assuming (to fix the ideas) that $h\leq \ell_\Omega/16$,
$\rho\ell_\Omega\le 1$, and since $\theta>\frac12$, we infer that 
$\omega^2 \geq \frac12 \rho^2 \left (\frac{\ell_\Omega}{h}\right )^{\frac{\theta}{2}} (\rho\ell_\Omega)^{-\beta}$. 
Observing that 
\begin{equation} \label{eq:bnd_rho2}
\frac{8}{\rho^2} \frac{1}{\omega^{2r}} \|f^{(r)}(t)\|_\Omega^2
\leq
\frac{2^{3+r}}{\rho^{2r+2}} (\rho \ell_\Omega)^{\beta r} \left (\frac{h}{\ell_\Omega}\right )^{\frac{r\theta}{2}} \|f^{(r)}(t)\|_\Omega^2,
\end{equation}
we select $r$ so that $r\ge \frac{2k+1}{\theta} \ge 4k+2$. Finally, we can choose
$\beta = \frac{3+2r}{1+r}\approx 2+\frac{1}{r}\approx 2$ to balance the powers of $\rho$ in the bounds~\eqref{eq:bnd_rho1}-\eqref{eq:bnd_rho2}.}
\end{remark}

\section{Error lower bound}
\label{section_lower_bound}

\rev{In this section, we establish an error lower bound. We need to consider only the first
two terms on the right-hand side of~\eqref{eq:error_upper_bnd} since the other three terms
can be considered as higher-order terms (see Remark~\ref{rem:hot_up}). However, the second
term on the right-hand side, involving $\nabla \ddeltaht$, is, at the same time, an
error indicator and an error. Therefore, we focus here on bounding the first term on the 
right-hand side of~\eqref{eq:error_upper_bnd} involving the dual norm of the residual.

We consider the $L^2$-orthogonal projection
$\pi_h: L^2(\Omega) \to V_h$ defined by $(\pi_h(\phi),v_h)_\Omega = (\phi,v_h)_\Omega$
for all $\phi \in L^2(\Omega)$ and all $v_h \in V_h$. Under mild assumptions
on the mesh grading, which accommodate newest vertex bisection in two dimensions (see
\cite{BanYs:14,GasHS:16} and also \cite[Remark 22.23]{Ern_Guermond_FEs_I_2021}),
we have, for all $\phi \in V$,
\begin{equation}
\label{eq:pih}
\|\phi-\pi_h (\phi)\|_\Omega
\lesssim
h \|\nabla \phi\|_\Omega,
\qquad
\|\nabla \pi_h (\phi)\|_\Omega
\lesssim
\|\nabla \phi\|_\Omega.
\end{equation}

\begin{theorem}[Error lower bound] \label{thm:lower}
Assume $f\in C^{r}_{\rm b}(J;L^2(\Omega))$.
Assume the CFL condition~\eqref{eq:CFL}.
Assume that the mesh is graded so that \eqref{eq:pih} is satisfied. 
The following holds for all $r\ge2$:
\begin{align}
\int_0^{+\infty} \|\calR(t)\|_{V'}^2 e^{-2\rho t}dt \lesssim {}&
\int_0^{+\infty} \Big\{\|\dot e(t)\|_\Omega^2+\|\nabla e(t)\|_\Omega^2\Big\} e^{-2\rho t}dt
+
\int_0^{+\infty} \|\nabla \deltaht(t)\|_\Omega^2 e^{-2\rho t}dt\nonumber \\
& + h^2\int_0^{+\infty} \|\eta_f(t)\|_\Omega^2  e^{-2\rho t}dt
+ \frac{h^{2r}}{\rho^2}\int_0^{+\infty} \|f^{(r)}(t)\|_\Omega^2 e^{-2\rho t} dt.
\label{eq:lower}
\end{align}
\end{theorem}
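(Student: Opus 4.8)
The plan is to pass to the frequency domain and estimate $\|\hR(s)\|_{V'}$ for $\Re(s)=\rho$, combining the perturbed Galerkin orthogonality~\eqref{eq:Galerkin} with the frequency-domain error equation~\eqref{eq:PDE_error_s}. First I observe that, under the CFL condition~\eqref{eq:CFL}, the energy of the time-reconstructed solution $\wht$ grows at most linearly in time (Remark~\ref{rem:bnd_engy_wht}, see also Section~\ref{section_stability_leapfrog}), so that $e=u-\wht$ and the residual $\calR$ grow at most polynomially and their Laplace transforms are well-defined for $\Re(s)=\rho>0$; the Parseval identity~\eqref{eq:identity} then reduces the claim to an estimate of $\int_{\rho-\sfi\infty}^{\rho+\sfi\infty}\|\hR(s)\|_{V'}^2\,ds$. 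Fixing $s$ with $\Re(s)=\rho$ and an arbitrary $\hv\in\hV$ with $\|\nabla\hv\|_\Omega=1$, I set $\hv_h:=\pi_h\hv\in\hV_h$ and decompose $\langle\hR(s),\hv\rangle=\langle\hR(s),\hv_h\rangle+\langle\hR(s),\hv-\hv_h\rangle$. By~\eqref{eq:Galerkin} the first term equals $(\nabla\hdeltaht(s),\nabla\hv_h)_\Omega$, while~\eqref{eq:PDE_error_s} gives $\langle\hR(s),\hv-\hv_h\rangle = s^2(\he(s),\hv-\hv_h)_\Omega + (\nabla\he(s),\nabla(\hv-\hv_h))_\Omega - (\hetaf(s),\hv-\hv_h)_\Omega$.

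Three of these four terms are immediately controlled by $\|\nabla\hdeltaht(s)\|_\Omega$, by $\|\nabla\he(s)\|_\Omega\le\tnorm{\he(s)}$, and by $h\|\hetaf(s)\|_\Omega$, respectively, using the stability and approximation properties~\eqref{eq:pih} of $\pi_h$ together with $\|\nabla\hv\|_\Omega=1$. The delicate term is $s^2(\he(s),\hv-\hv_h)_\Omega$, whose factor $|s|^2$ is not absorbed by the damped energy norm. The crucial observation is that $\wht$ is $V_h$-valued, hence $\hwht(s)\in\hV_h$, so that $(\hwht(s),\hv-\hv_h)_\Omega=0$ by $L^2$-orthogonality of $\pi_h$; therefore $s^2(\he(s),\hv-\hv_h)_\Omega = s^2(\hu(s),\hv-\hv_h)_\Omega = s^2(\hu(s)-\pi_h\hu(s),\hv-\hv_h)_\Omega$, and I split the frequency range. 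For $|s|\le h^{-1}$, I use $|s^2(\he(s),\hv-\hv_h)_\Omega|\le|s|^2h\|\he(s)\|_\Omega\le|s|h\,\tnorm{\he(s)}\le\tnorm{\he(s)}$, exploiting $\|\he(s)\|_\Omega\le|s|^{-1}\tnorm{\he(s)}$ and $|s|h\le1$. For $|s|>h^{-1}$, I profit from the extra power of $h$ gained by projecting $\hu(s)$: $|s^2(\hu(s)-\pi_h\hu(s),\hv-\hv_h)_\Omega|\lesssim |s|^2h^2\|\nabla\hu(s)\|_\Omega\le|s|^2h^2\,\tnorm{\hu(s)}\le\rho^{-1}|s|^2h^2\|\hf(s)\|_\Omega$ by the a priori bound~\eqref{eq:key_stab_freq}, and invoking the time-smoothness of $f$ through $\|\hf(s)\|_\Omega=|s|^{-r}\|\hfr(s)\|_\Omega$ together with $|s|^{2-r}\le h^{r-2}$ (valid since $r\ge2$ and $|s|>h^{-1}$) bounds this by $\rho^{-1}h^{r}\|\hfr(s)\|_\Omega$.

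Collecting these bounds, $|\langle\hR(s),\hv\rangle|$ is controlled, uniformly in $\hv$, by $\|\nabla\hdeltaht(s)\|_\Omega+\tnorm{\he(s)}+h\|\hetaf(s)\|_\Omega$ when $|s|\le h^{-1}$ and by this quantity plus $\rho^{-1}h^{r}\|\hfr(s)\|_\Omega$ when $|s|>h^{-1}$; taking the supremum over $\hv$, squaring, and integrating over $\Re(s)=\rho$ with the split at $|s|=h^{-1}$ yields $\int_{\rho-\sfi\infty}^{\rho+\sfi\infty}\|\hR(s)\|_{V'}^2\,ds\lesssim\int_{\rho-\sfi\infty}^{\rho+\sfi\infty}\big(\|\nabla\hdeltaht(s)\|_\Omega^2+\tnorm{\he(s)}^2+h^2\|\hetaf(s)\|_\Omega^2\big)\,ds+\rho^{-2}h^{2r}\int_{\rho-\sfi\infty}^{\rho+\sfi\infty}\|\hfr(s)\|_\Omega^2\,ds$, and the Parseval identity~\eqref{eq:identity} applied to $\deltaht$, to $e$ (which gives $\calE_\rho^2(e)$), to $\eta_f$, and to $f^{(r)}$ produces exactly the right-hand side of~\eqref{eq:lower}. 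The main obstacle is the treatment of the singular term $s^2(\he(s),\hv-\hv_h)_\Omega$: the whole argument rests on the fact that the $C^2$-reconstruction $\hwht(s)$ lies in $\hV_h$ and therefore drops out against the $L^2$-orthogonal complement of $V_h$, after which the frequency cutoff at $|s|=h^{-1}$ and the time-regularity of the source term combine to turn this potentially uncontrollable term into a genuinely higher-order contribution.
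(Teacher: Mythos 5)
Your proof is correct and follows essentially the same route as the paper's: you use the $L^2$-orthogonal projection $\pi_h$ as the quasi-interpolant, invoke Galerkin orthogonality and the stability/approximation properties \eqref{eq:pih}, exploit the fact that the $V_h$-valued reconstruction drops out against $(I-\pi_h)$ so that only $\hu$ survives in the critical term, and split frequencies at $|s|=h^{-1}$, using \eqref{eq:key_stab_freq} and $r\ge 2$ time-derivatives of $f$ in the high-frequency regime. The only (immaterial) difference is organizational: the paper first passes through the modified residual $\calRdag$ and derives the pointwise-in-time bound \eqref{eq:bnd_calR} before treating only the $h\|(I-\pi_h)\ddot e(t)\|_\Omega$ term in the frequency domain, whereas you carry out the entire decomposition on $\hR(s)$ directly, using the perturbed orthogonality \eqref{eq:Galerkin} and writing $\widehat{\ddot e}(s)=s^2\he(s)$ from the start.
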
}

\rev{\begin{proof}
(1) The triangle inequality gives $\|\calR(t)\|_{V'}\le \|\calRdag(t)\|_{V'}
+ \|\nabla\deltaht(t)\|_\Omega$, where $\calRdag(t)\in V'$ is the modified residual
defined in~\eqref{eq:residual_dag}. 
To bound the norm of $\calRdag(t)$ in $V'$, we pick an arbitrary
$v \in V$ with $\|\nabla v\|_\Omega = 1$. Invoking the Galerkin
orthogonality property~\eqref{eq:Galerkin_dag} gives
\begin{align*}
\langle \calRdag(t),v \rangle
={}&
\langle \calRdag(t),(I-\pi_h)(v) \rangle
\\
={}&
(\ddot e(t),(I-\pi_h)(v))_\Omega + (\nabla e(t),\nabla (I-\pi_h)(v))_\Omega,
+
(\nabla \deltaht(t),\nabla(I-\pi_h)(v))_\Omega - (\eta_f(t),(I-\pi_h)(v))_\Omega
\\
={}&
((I-\pi_h)\ddot e(t),(I-\pi_h)(v))_\Omega + (\nabla e(t),\nabla (I-\pi_h)(v))_\Omega,
+
(\nabla \deltaht(t),\nabla(I-\pi_h)(v))_\Omega \\
&- (\eta_f(t),(I-\pi_h)(v))_\Omega,
\end{align*}
where we employed the $L^2$-orthogonality property of $\pi_h$. Invoking
the properties of $\pi_h$ in~\eqref{eq:pih}, we infer that
\begin{equation*} 
\|\calRdag(t)\|_{V'} \lesssim h\|(I-\pi_h)\ddot e(t)\|_\Omega 
+ \|\nabla e(t)\|_\Omega + \|\nabla \deltaht(t)\|_\Omega + h\|\eta_f(t)\|_\Omega,
\end{equation*}
for all $t \geq 0$. Owing to the triangle inequality, we infer that
\begin{equation} \label{eq:bnd_calR}
\|\calR(t)\|_{V'} \lesssim h\|(I-\pi_h)\ddot e(t)\|_\Omega 
+ \|\nabla e(t)\|_\Omega + \|\nabla \deltaht(t)\|_\Omega + h\|\eta_f(t)\|_\Omega,
\end{equation}
and it remains to bound the first term on the right-hand side.

(2) Owing to the identity~\eqref{eq:identity}, we infer that
\[
h^2\int_0^{+\infty} \|(I-\pi_h)(\ddot e(t))\|_\Omega^2 e^{-2\rho t}dt
= h^2 \int_{\rho-\sfi\infty}^{\rho+\sfi\infty} |s|^2 \|(I-\pi_h)\hde(s)\|_\Omega^2 ds.
\]
We split the integral on the right-hand side depending on whether
$|s|\le h^{-1}$ or $|s|\ge h^{-1}$. On the one hand, we have
\[
h^2 \int_{\rho-\sfi\infty}^{\rho+\sfi\infty} |s|^2 \|(I-\pi_h)\hde(s)\|_\Omega^2 
1_{|s|\le h^{-1}}ds \le \int_{\rho-\sfi\infty}^{\rho+\sfi\infty} \|(I-\pi_h)\hde(s)\|_\Omega^2ds
= \int_0^{+\infty} \|(I-\pi_h)\dot e(t)\|_\Omega^2 e^{-2\rho t}dt. 
\]
On the other hand, we have 
\begin{align*}
h^2 \int_{\rho-\sfi\infty}^{\rho+\sfi\infty} |s|^2 \|(I-\pi_h)\hde(s)\|_\Omega^2 1_{|s|> h^{-1}}ds 
& \le h^2 \int_{\rho-\sfi\infty}^{\rho+\sfi\infty} |s|^2 \|\hat{\dot u}(s)\|_\Omega^2 1_{|s|> h^{-1}}ds  \\
& \le \frac{h^{2r}}{\rho^2} \int_0^{+\infty} \|f^{(r)}(t)\|_\Omega^2 e^{-2\rho t}dt. 
\end{align*}
Combining these bounds proves that
\begin{equation} \label{eq:bnd_ddot_e}
h^2\int_0^{+\infty} \|(I-\pi_h)(\ddot e(t))\|_\Omega^2 e^{-2\rho t}dt \le
\int_0^{+\infty} \|(I-\pi_h)\dot e(t)\|_\Omega^2 e^{-2\rho t}dt + 
\frac{h^{2r}}{\rho^2} \int_0^{+\infty} \|f^{(r)}(t)\|_\Omega^2 e^{-2\rho t}dt.
\end{equation}
Since $\|(I-\pi_h)\dot e(t)\|_\Omega \le \|\dot e(t)\|_\Omega$, 
the combination of~\eqref{eq:bnd_calR} and~\eqref{eq:bnd_ddot_e} concludes the proof.
\end{proof}}

\begin{remark}[\rev{Higher-order terms}] \label{rem:hot_low}
\rev{The error lower bound~\eqref{eq:lower}
fits the form~\eqref{eq:low_intro} if one sets
\[
\textup{h.o.t.} := h^2 \int_0^{+\infty} \|\eta_f(t)\|_\Omega^2  e^{-2\rho t}dt
+ \frac{h^{2r}}{\rho^2} \int_0^{+\infty} \|f^{(r)}(t)\|_\Omega^2  e^{-2\rho t} dt.
\]
We refer the reader to Remark~\ref{rem:hot_up} for the discussion on the data time-oscillation term $\eta_f$. Moreover, we can take here $r\ge k+1$.}
\end{remark}

\begin{remark}[\rev{Alternative error lower bound}]
\rev{It is also possible to establish an error lower bound without invoking
the $H^1$-stability of the $L^2$-orthogonal projection, but this leads
to the additional term $\frac{h^4}{\rho^2}\int_0^{+\infty} \|\ddot f(t)\|_\Omega^2 e^{-2\rho t}dt$
on the right-hand side of~\eqref{eq:lower}.
This term is not fully satisfactory as it is not of higher-order.
To establish the claim, the proof
proceeds again in two steps. The first step is similar to the above one (but invokes
any $H^1$-stable quasi-interpolation operator instead of the $L^2$-orthogonal projection), 
leading to (compare with~\eqref{eq:bnd_calR})
\[
\|\calR(t)\|_{V'} \lesssim h\|\ddot e(t)\|_\Omega 
+ \|\nabla e(t)\|_\Omega + \|\nabla \deltaht(t)\|_\Omega + h\|\eta_f(t)\|_\Omega.
\]
To bound $h\|\ddot e(t)\|_\Omega$, we consider as above the Laplace transform.
In the low-frequency regime ($|s|\le h^{-1}$), we have
\[
h\|\hat{\ddot e}(s)\|_\Omega = h |s| \|\hat{\dot e}(s)\|_\Omega \le \|\hat{\dot e}(s)\|_\Omega.
\]
In the high-frequency regime ($|s|\ge h^{-1}$), we first invoke the triangle inequality so that
$h\|\hat{\ddot e}(s)\|_\Omega \le h |s|^2 \|\hat u(s)\|_\Omega + h |s|^2 \|\hwht(s)\|_\Omega$.
The first term on the right-hand side is bounded as
\[
h|s|^2 \|\hu(s)\|_\Omega \le h|s| \tnorm{\hu(s)} \le \frac{h}{\rho} |s| \|\hf(s)\|_\Omega \le \frac{h^r}{\rho} \|\hfr(s)\|_\Omega.
\] 
On the other hand, we have $h |s|^2 \|\hwht(s)\|_\Omega \le 
\|\hat{\dddot w}_{h\tau}(s)\|_\Omega$, and we invoke the bound~\rev{\eqref{eq:bnd_dddwht}} 
from Lemma~\ref{lem:bnd_eta} together with the CFL condition. Altogether, this gives
\[
\int_0^{+\infty} h^2 \|\ddot e(t)\|_\Omega e^{-2\rho t}dt \lesssim 
\int_0^{+\infty} \Big\{\|\dot e(t)\|_\Omega^2 
+ \frac{h^{2r}}{\rho^2} \|f^{(r)}(t)\|_\Omega^2 
+ \frac{h^4}{\rho^2} \|\ddot f(t)\|_\Omega^2 \Big\} e^{-2\rho t}dt,
\]
whence the claim.}
\end{remark}

\section{Numerical results}
\label{section_numerical_results}

In this section, we present numerical results to assess the a posteriori error estimates derived in the previous sections. 

\subsection{Setting}

Here, we describe the discretization parameters, the error measures, and how we estimate the 
space discretization error. In all cases, we consider the one-dimensional domain 
$\Omega := (-L,L)$ with $L := 10$, and set the computational time to $T_\star := 1000$. 
For the values of $\rho$ employed hereafter,
this final time is such that $e^{-\rho T_\star} \leq 5 {\cdot} 10^{-6}$ in all the 
simulations, so that stopping the simulation at $t=T_*$ has little effect.

\subsubsection{Discretization parameters}

We use uniform meshes, and continuous finite elements of degree
$k \in\{1,2,3\}$. We employ mesh sizes ranging from $h=10$ to $h=0.039$, i.e., the interval
$\Omega$ is divided into at least $2$ and at most $512$ elements.
For each $k$, we denote by $\alpha_k$ the largest value for which the leapfrog scheme
is stable with the time step $\tau = \alpha_{k} h$.  
We empirically found these values to be $\alpha_1 = 0.59$,
$\alpha_2 = 0.26$, and $\alpha_3 = 0.15$. In our examples, given $h$ and $k$, 
we fix the time step 
by setting $\tau := r\alpha_k h$ with $r \in (0,1)$. 
Unless explicitly specified, we use $r:=0.9$.

When $k=3$, we also employ finer time-step values so that the order of the 
time discretization error matches that of the space discretization error. 
To do so, we fix $\tau_0 = r_0 \alpha_3 h_0$
on the coarsest mesh (i.e., $h_0 = 10$), and then adjust $\tau$ so that
$\tau^2/h^3$ remains constant for all the finer meshes. 
This is indicated by the notation ``$\tau^2 \sim h^3$'' in the graphs below.

\subsubsection{Error measures}

For convenience, we use the following shorthand notation for the errors, where $z$
stands either for $u$ or for $w$:
\begin{subequations} \begin{alignat}{2}
e^2(z) & := \int_0^{+\infty} \!\! \|u(t)-\zht(t)\|_\Omega^2 e^{-2\rho t} dt,
&\quad
e^2(\sfU) &:= \tau \sum_{n = 0}^{+\infty} \|u(t^n)-\sfU^n\|_\Omega^2 e^{-2\rho t^n}, \\
e_x^2(z) &:= \int_0^{+\infty} \!\!\|\nabla(u(t)-\zht(t))\|_\Omega^2 e^{-2\rho t} dt,
&\quad
e_x^2(\sfU) &:= \tau \sum_{n = 0}^{+\infty} \|\nabla(u(t^n)-\sfU^n)\|_\Omega^2 e^{-2\rho t^n}, \\
e_t^2(z) &:= \int_0^{+\infty} \|\dot u(t)-\dzht(t)\|_\Omega^2 e^{-2\rho t} dt,
&\quad
e_t^2(\sfU) &:= \tau \sum_{n = 0}^{+\infty} \|\dot u(t^n)-\tfrac{\sfU^{n+1}-\sfU^{n-1}}{\tau}\|_\Omega^2 e^{-2\rho t^n}.
\end{alignat} \end{subequations}
For the error measured in the damped energy-norm, we use the shorthand notation
\begin{equation}
\calE_\rho^2 := \calE_\rho^2(u-\wht) = e_t^2(w)+e_x^2(w).
\end{equation}
In practice, all the integrals and sums are computed up to $t=T_\star$. As argued above,
$T_\star$ has been chosen in such a way that the tail of the integrals and 
series is negligible.

\subsubsection{Estimators}

To estimate the space discretization error, we construct a continuous piecewise
polynomial function $\sigma_{h\tau}$ with vanishing spatial mean value such that
\begin{equation}
\partial_x \sigma_{h\tau}(t) = f_\tau(t)-\ddot w_{h\tau}(t) \quad \forall t\in J.
\end{equation}
A simple integration by parts reveals that
\begin{equation}
\|\calR(t)\|_{V'}
\leq
\|\partial_x w_{h\tau}(t)+\sigma_{h\tau}(t)\|_\Omega.
\end{equation}
As a result, introducing the quantities
\begin{subequations} \begin{align}
R^2
&:=
\int_0^{+\infty} \!\!\|\partial_x w_{h\tau}(t)+\sigma_{h\tau}(t)\|^2_\Omega e^{-2\rho t} dt, \\
M^2
&:=
\int_0^{+\infty} \!\! \frac{1}{\rho^2}
\|\partial_x (\dot u_{h\tau}(t) - \dot w_{h\tau}(t))\|_\Omega^2 e^{-2\rho t} dt,
\end{align} \end{subequations}
and using the following shorthand notation for the estimator:
\begin{equation}
\Lambda_\rho^2 := R^2 + \rev{20} M^2,
\end{equation}
the error upper bound \eqref{eq:error_upper_bnd} rewrites
\begin{equation}
\calE_\rho^2 \leq \Lambda_\rho^2 + \operatorname{h.o.t.}
\end{equation}

\subsection{Benchmark solutions}

We consider two different analytical solutions
corresponding to ``standing'' and ``propagating'' waves. 

\subsubsection{Standing wave}
We set
\begin{equation}
f(t,x) := -2(t-t_0)e^{-(t-t_0)^2}\sin\big(a(x-L)\big),
\end{equation}
where $t_0 := 4$ and $a := m\pi/(2L)$ with $m := 5$. Although $f$ does
not vanish at $t=0$, we do have
$|f(0,\cdot)| \leq 8 {\cdot} e^{-16} \leq 0.9 {\cdot} 10^{-6}$,
$|\dot f(0,\cdot)| \leq 62 {\cdot} e^{-16} \leq 7.0 {\cdot} 10^{-6}$
and
$|\ddot f(0,\cdot)| \leq 464 {\cdot} e^{-16} \leq 5.3 {\cdot} 10^{-5}$, whereas
$\max_{t,x} |f(t,x)| \geq 0.5$. The corresponding solution reads
\begin{equation}
u(t,x)
=
\Re \big(\psi(t-t_0)\big) 
\sin(a(x-L)), \qquad
\psi(t)
:= e^{\sfi a t} \int_{-\infty}^{t} e^{-\theta^2-\sfi a \theta} d\theta.
\end{equation}
To see this, observe that $\ddot \psi(t) = (-2t+\sfi a)e^{-t^2} - a^2\psi(t)$
so that
$\Re \big(\ddot \psi(t)+a^2\psi(t)\big) = -2te^{-t^2}.$
We observe that $|u(0,\cdot)| \leq 1.1 {\cdot} 10^{-8}$
and $|\dot u(0,\cdot)| \leq 8.3 {\cdot} 10^{-8}$, whereas
$\max_{t,x} |u(t,x)| \geq 4 {\cdot} 10^{-2}$, so that setting the initial
conditions to zero produces an error that is (much) smaller than the discretization error.
The time profiles of $f$ and $u$ are shown in Figure \ref{figure_solution_standing}.
In practice, we compute $\psi$ with the {\tt erf} function of the {\tt Faddeeva} software
package by observing that
\begin{equation}
\psi(t)
=
e^{-\frac{a^2}{4}+\sfi at} \int_{-\infty}^t e^{-(\theta+\sfi\frac{a}{2})^2} d\theta
=
e^{-\frac{a^2}{4}+\sfi at} \int_{-\infty}^{t+\sfi\frac{a}{2}} e^{-z^2} dz.
\end{equation}

\begin{figure}
\caption{Time profiles in the standing wave example}
\label{figure_solution_standing}
\end{figure}

\subsubsection{Propagating wave} 
We set
\begin{equation}
f(t,x)
:=
-2(t-t_0)e^{-(t-t_0)^2} e^{-x^2},
\end{equation}
where, again, $t_0 := 4$. The solution in full space reads
\begin{equation}
u_\infty(t,x)
:=
\frac{1}{4}
\left (
e^{\frac12(t-t_0+x)^2} \int_{-\infty}^{t-t_0+x} e^{-\frac12\tau^2} d\tau
+
e^{\frac12(t-t_0-x)^2} \int_{-\infty}^{t-t_0-x} e^{-\frac12\tau^2} d\tau
\right )
e^{-(x^2+(t-t_0)^2)}.
\end{equation}
This expression is obtained from the representation
\begin{equation}
u_\infty(t,x)
=
\frac{1}{2}
\int_{y \in \Real} \int_{\theta > |y|} f(t-\theta,x-y) d\theta dy,
\end{equation}
which stems from the Green function $\frac12\mathbf 1_{t > |x|}$
of the one-dimensional wave equation in free space.
Notice that $u_\infty$ does not satisfy the Dirichlet boundary conditions
at $x = \pm L$. However, we can obtain the correct solution by using
the mirror image principle, and setting
\begin{equation}
u(t,x) := u_\infty(t,x) + \sum_{n=1}^{+\infty} (-1)^n \Big\{
u_\infty(t, 2nL+(-1)^n x)
+
u_\infty(t,-2nL+(-1)^n x)
\Big\}.
\end{equation}
One readily checks that $u$ satisfies the boundary conditions at $x \pm L$.
As above, we do not have $u(0,\cdot) = 0$, but the initial values are small
enough so that they do not affect the numerical experiments. Notice that $u$
in fact solves the wave equation over $J \times \mathbb R$ with
right-hand side
\begin{equation}
\widetilde f(t,x) := f(t,x) + \sum_{n=1}^{+\infty} (-1)^n \Big\{
f(t, 2nL+(-1)^n x)
+
f(t,-2nL+(-1)^n x)
\Big\},
\end{equation}
but the difference $\widetilde f - f$ is small enough over $\Omega$ so that it does
not impact the accuracy of our numerical experiments. For any finite time $t \geq 0$ and any
$x \in \Omega$, the terms in the series defining $u(t,x)$ decay exponentially.
For our simulations, we simply compute the sum over $n\in\{1{:}50\}$. As above,
we use the {\tt Faddeeva} software package to evaluate the integral of the Gaussian
numerically. Space profiles of $u$ at different times are shown in Figure \ref{figure_solution_propagating}.

\begin{figure}
\caption{Propagating wave: space profiles of the solution at various times}
\label{figure_solution_propagating}
\end{figure}

\subsection{Accuracy of the reconstructions}

Our first goal is to verify that the reconstructions
$u_{h\tau}$ and $w_{h\tau}$ obtained from the time-step values $(\sfU^n)_{n\in\polN}$
have the expected accuracy. We focus on the standing wave example with $\rho := 0.02$.
As can be seen from Figures \ref{figure_reconstruction}, \ref{figure_reconstruction_space}
and \ref{figure_reconstruction_time}, all the errors are very close,
as anticipated. We also observe the expected convergence rates.
We obtained very similar results with different values of $\rho$ as well as
in the propagating wave example. For the sake of shortness, we do not
reproduce all the curves here.

\begin{figure}
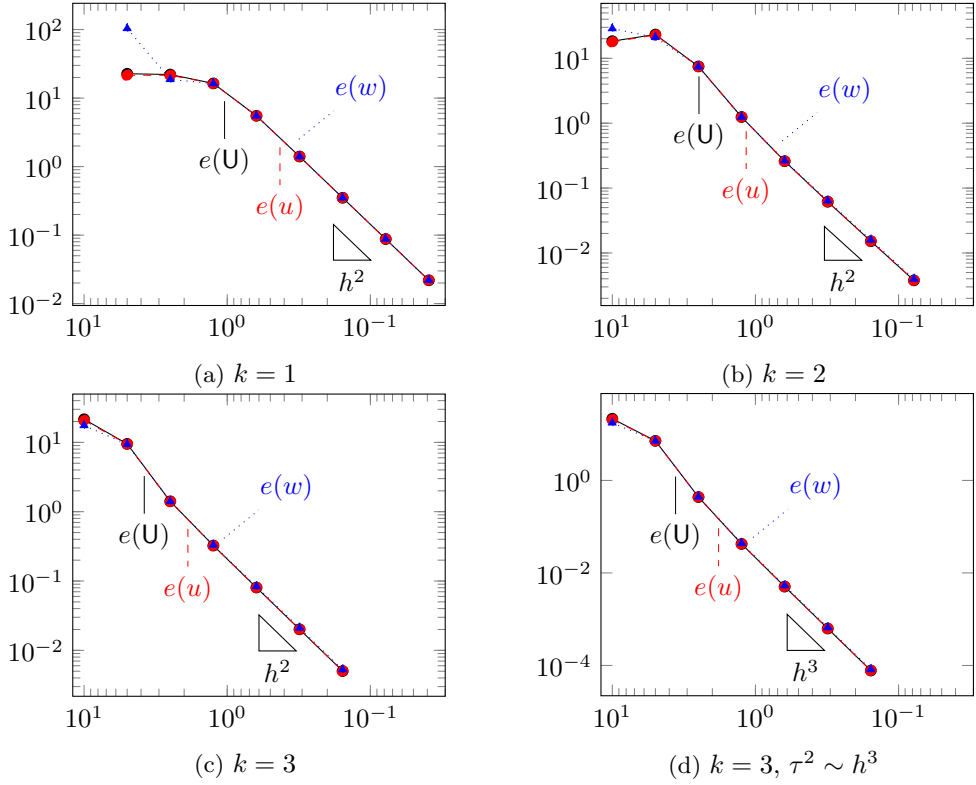

\begin{plotpagetitle}{}{$k=1$}
\ploteUv{figures/standing/data/rho0.02_p0_r0.9/data.txt} \plotpin{0.2}{0.5cm}{-90}{$e(\sfU)$}{black};
\ploteuv{figures/standing/data/rho0.02_p0_r0.9/data.txt} \plotpin{0.4}{0.5cm}{-90}{$e(u)$}{red};
\plotewv{figures/standing/data/rho0.02_p0_r0.9/data.txt} \plotpin{0.5}{0.5cm}{ 45}{$e(w)$}{blue};
\SlopeTriangle{0.7}{-0.1}{0.15}{-2}{$h^2$}{}
\end{plotpagetitle}
\begin{plotpagetitle}{}{$k=2$}
\ploteUv{figures/standing/data/rho0.02_p1_r0.9/data.txt} \plotpin{0.2}{0.5cm}{-90}{$e(\sfU)$}{black};
\ploteuv{figures/standing/data/rho0.02_p1_r0.9/data.txt} \plotpin{0.4}{0.5cm}{-90}{$e(u)$}{red};
\plotewv{figures/standing/data/rho0.02_p1_r0.9/data.txt} \plotpin{0.5}{0.5cm}{ 45}{$e(w)$}{blue};
\SlopeTriangle{0.6}{-0.1}{0.15}{-2}{$h^2$}{}
\end{plotpagetitle}

\begin{plotpagetitle}{}{$k=3$}
\ploteUv{figures/standing/data/rho0.02_p2_r0.9/data.txt} \plotpin{0.2}{0.5cm}{-90}{$e(\sfU)$}{black};
\ploteuv{figures/standing/data/rho0.02_p2_r0.9/data.txt} \plotpin{0.4}{0.5cm}{-90}{$e(u)$}{red};
\plotewv{figures/standing/data/rho0.02_p2_r0.9/data.txt} \plotpin{0.5}{0.5cm}{ 45}{$e(w)$}{blue};
\SlopeTriangle{0.5}{-0.1}{0.15}{-2}{$h^2$}{}
\end{plotpagetitle}
\begin{plotpagetitle}{}{$k=3$, $\tau^2 \sim h^3$}
\ploteUv{figures/standing/data/rho0.02_p2_sts/data.txt} \plotpin{0.2}{0.5cm}{-90}{$e(\sfU)$}{black};
\ploteuv{figures/standing/data/rho0.02_p2_sts/data.txt} \plotpin{0.4}{0.5cm}{-90}{$e(u)$}{red};
\plotewv{figures/standing/data/rho0.02_p2_sts/data.txt} \plotpin{0.5}{0.5cm}{ 45}{$e(w)$}{blue};
\SlopeTriangle{0.5}{-0.1}{0.15}{-3}{$h^3$}{}
\end{plotpagetitle}
\caption{Standing wave: reconstruction errors on the solution}
\label{figure_reconstruction}
\end{figure}

\begin{figure}
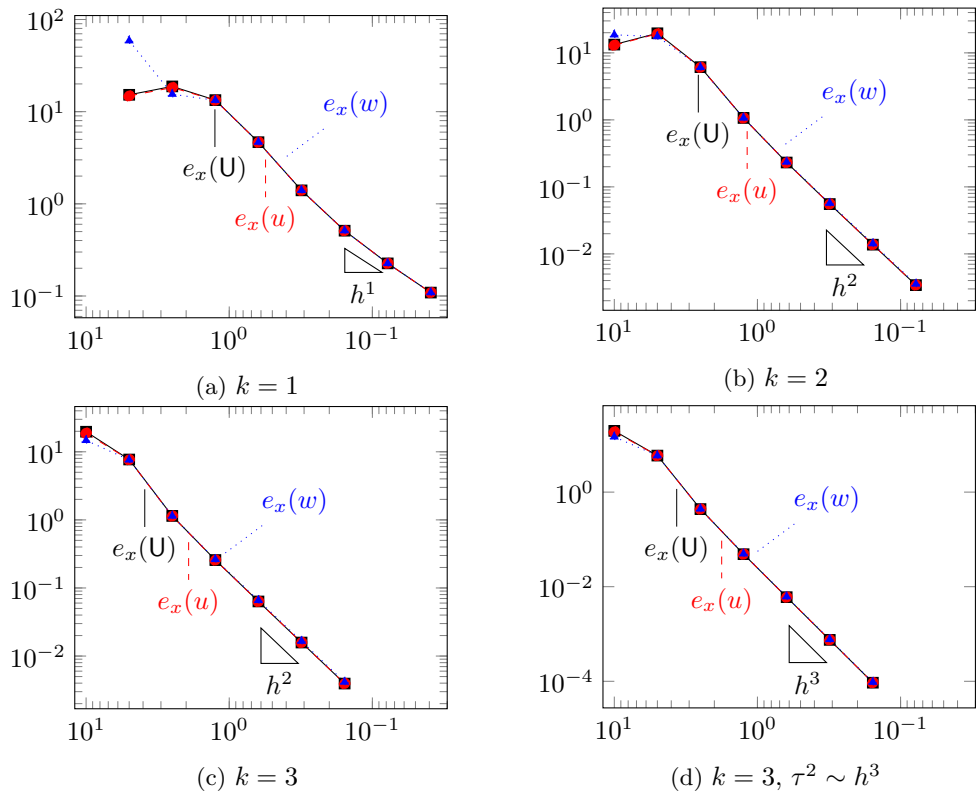

\begin{plotpagetitle}{}{$k=1$}
\ploteUx{figures/standing/data/rho0.02_p0_r0.9/data.txt} \plotpin{0.2}{0.5cm}{-90}{$e_x(\sfU)$}{black};
\ploteux{figures/standing/data/rho0.02_p0_r0.9/data.txt} \plotpin{0.4}{0.5cm}{-90}{$e_x(u)$}{red};
\plotewx{figures/standing/data/rho0.02_p0_r0.9/data.txt} \plotpin{0.5}{0.5cm}{ 45}{$e_x(w)$}{blue};
\SlopeTriangle{0.725}{-0.1}{0.15}{-1}{$h^1$}{}
\end{plotpagetitle}
\begin{plotpagetitle}{}{$k=2$}
\ploteUx{figures/standing/data/rho0.02_p1_r0.9/data.txt} \plotpin{0.2}{0.5cm}{-90}{$e_x(\sfU)$}{black};
\ploteux{figures/standing/data/rho0.02_p1_r0.9/data.txt} \plotpin{0.4}{0.5cm}{-90}{$e_x(u)$}{red};
\plotewx{figures/standing/data/rho0.02_p1_r0.9/data.txt} \plotpin{0.5}{0.5cm}{ 45}{$e_x(w)$}{blue};
\SlopeTriangle{0.6}{-0.1}{0.15}{-2}{$h^2$}{}
\end{plotpagetitle}

\begin{plotpagetitle}{}{$k=3$}
\ploteUx{figures/standing/data/rho0.02_p2_r0.9/data.txt} \plotpin{0.2}{0.5cm}{-90}{$e_x(\sfU)$}{black};
\ploteux{figures/standing/data/rho0.02_p2_r0.9/data.txt} \plotpin{0.4}{0.5cm}{-90}{$e_x(u)$}{red};
\plotewx{figures/standing/data/rho0.02_p2_r0.9/data.txt} \plotpin{0.5}{0.5cm}{ 45}{$e_x(w)$}{blue};
\SlopeTriangle{0.5}{-0.1}{0.15}{-2}{$h^2$}{}
\end{plotpagetitle}
\begin{plotpagetitle}{}{$k=3$, $\tau^2 \sim h^3$}
\ploteUx{figures/standing/data/rho0.02_p2_sts/data.txt} \plotpin{0.2}{0.5cm}{-90}{$e_x(\sfU)$}{black};
\ploteux{figures/standing/data/rho0.02_p2_sts/data.txt} \plotpin{0.4}{0.5cm}{-90}{$e_x(u)$}{red};
\plotewx{figures/standing/data/rho0.02_p2_sts/data.txt} \plotpin{0.5}{0.5cm}{ 45}{$e_x(w)$}{blue};
\SlopeTriangle{0.5}{-0.1}{0.15}{-3}{$h^3$}{}
\end{plotpagetitle}
\caption{Standing wave: reconstruction errors on the space derivative}
\label{figure_reconstruction_space}
\end{figure}

\begin{figure}
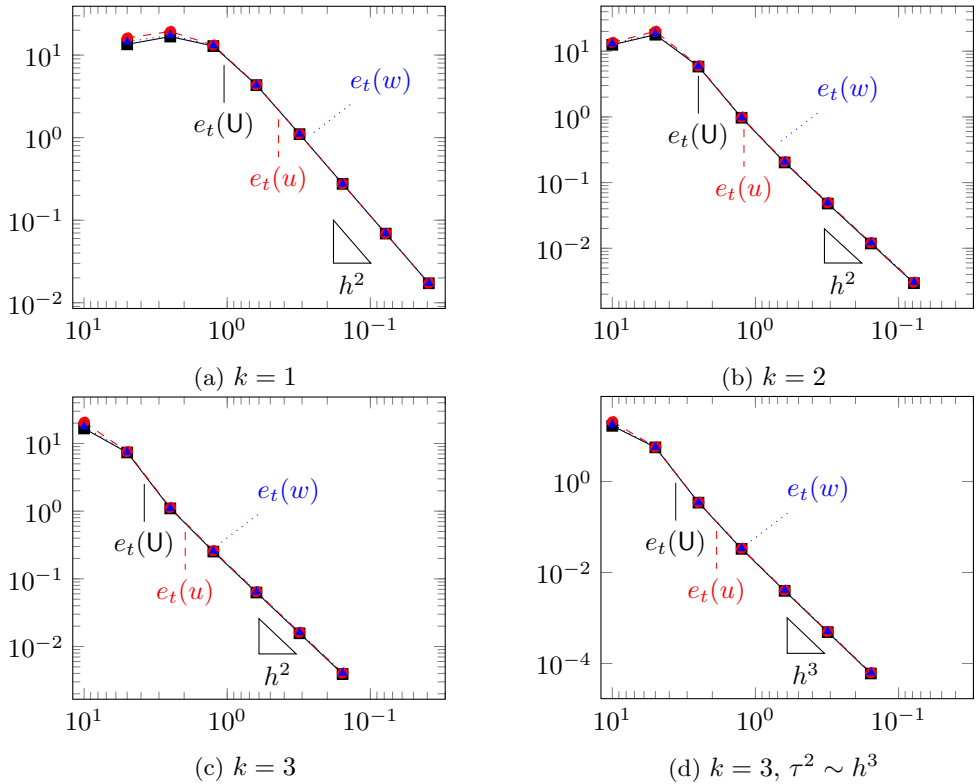

\begin{plotpagetitle}{}{$k=1$}
\ploteUt{figures/standing/data/rho0.02_p0_r0.9/data.txt} \plotpin{0.2}{0.5cm}{-90}{$e_t(\sfU)$}{black};
\ploteut{figures/standing/data/rho0.02_p0_r0.9/data.txt} \plotpin{0.4}{0.5cm}{-90}{$e_t(u)$}{red};
\plotewt{figures/standing/data/rho0.02_p0_r0.9/data.txt} \plotpin{0.5}{0.5cm}{ 45}{$e_t(w)$}{blue};
\SlopeTriangle{0.7}{-0.1}{0.15}{-2}{$h^2$}{}
\end{plotpagetitle}
\begin{plotpagetitle}{}{$k=2$}
\ploteUt{figures/standing/data/rho0.02_p1_r0.9/data.txt} \plotpin{0.2}{0.5cm}{-90}{$e_t(\sfU)$}{black};
\ploteut{figures/standing/data/rho0.02_p1_r0.9/data.txt} \plotpin{0.4}{0.5cm}{-90}{$e_t(u)$}{red};
\plotewt{figures/standing/data/rho0.02_p1_r0.9/data.txt} \plotpin{0.5}{0.5cm}{ 45}{$e_t(w)$}{blue};
\SlopeTriangle{0.6}{-0.1}{0.15}{-2}{$h^2$}{}
\end{plotpagetitle}

\begin{plotpagetitle}{}{$k=3$}
\ploteUt{figures/standing/data/rho0.02_p2_r0.9/data.txt} \plotpin{0.2}{0.5cm}{-90}{$e_t(\sfU)$}{black};
\ploteut{figures/standing/data/rho0.02_p2_r0.9/data.txt} \plotpin{0.4}{0.5cm}{-90}{$e_t(u)$}{red};
\plotewt{figures/standing/data/rho0.02_p2_r0.9/data.txt} \plotpin{0.5}{0.5cm}{ 45}{$e_t(w)$}{blue};
\SlopeTriangle{0.5}{-0.1}{0.15}{-2}{$h^2$}{}
\end{plotpagetitle}
\begin{plotpagetitle}{}{$k=3$, $\tau^2 \sim h^3$}
\ploteUt{figures/standing/data/rho0.02_p2_sts/data.txt} \plotpin{0.2}{0.5cm}{-90}{$e_t(\sfU)$}{black};
\ploteut{figures/standing/data/rho0.02_p2_sts/data.txt} \plotpin{0.4}{0.5cm}{-90}{$e_t(u)$}{red};
\plotewt{figures/standing/data/rho0.02_p2_sts/data.txt} \plotpin{0.5}{0.5cm}{ 45}{$e_t(w)$}{blue};
\SlopeTriangle{0.5}{-0.1}{0.15}{-3}{$h^3$}{}
\end{plotpagetitle}
\caption{Standing wave: reconstruction errors on the time derivative}
\label{figure_reconstruction_time}
\end{figure}

\subsection{Properties of the error estimator}

We now focus on the properties of the error estimator,
and we start with the standing wave example. We first study
short-time error control by setting $\rho := 1$. The corresponding results are reported
in Figure~\ref{figure_standing_short}. The first observation is that 
the data-oscillation term $\eta_f$ always superconverges. Besides,
asymptotically, we indeed obtain a guaranteed error upper bound, and the effectivity
index is at most 3 in all cases. Moreover, for $k=1$, the estimator seems to be asymptotically
exact. Since the leapfrog scheme provides a time discretization error decaying as 
$\tau^2 \sim h^2$,
this is to be expected, and it is in agreement with the results observed in the space
semi-discrete case in \cite{Theo:23}. Similarly, we see that, for $k=1$, the error estimator 
$M$ superconverges.

\begin{figure}
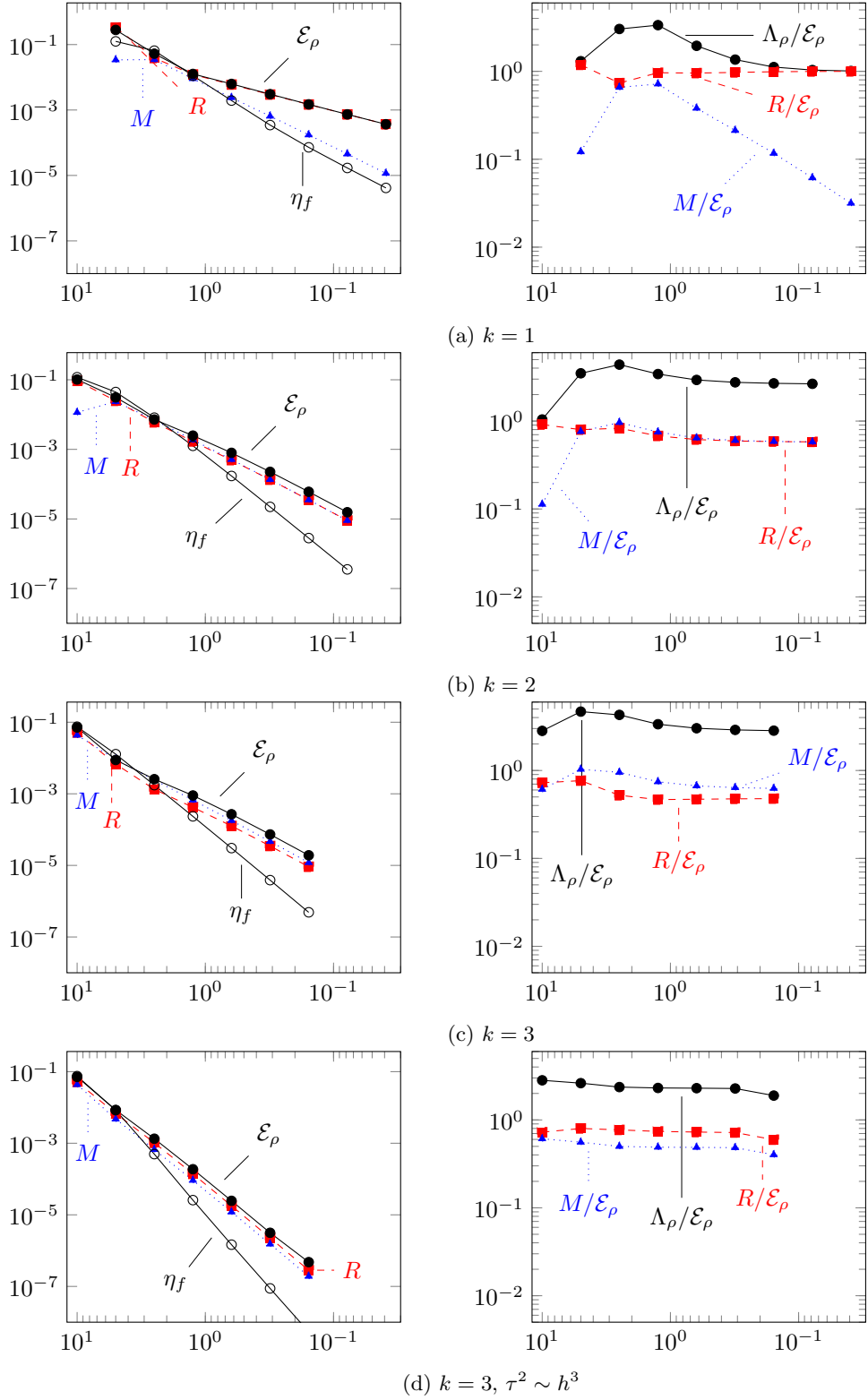

{
\begin{plotpage}
{
	ymax=0,
	ymin=1.e-8
}
\plotR{figures/standing/data/rho1.00_p0_r0.9/data.txt} \plotpin{0.05}{1cm}{-45}{$R$}{red};
\plotM{figures/standing/data/rho1.00_p0_r0.9/data.txt} \plotpin{0.05}{0.5cm}{-90}{$M$}{blue};
\plotF{figures/standing/data/rho1.00_p0_r0.9/data.txt} \plotpin{0.7}{0.5cm}{-90}{$\eta_f$}{black};
\plotE{figures/standing/data/rho1.00_p0_r0.9/data.txt} \plotpin{0.6}{0.5cm}{45}{$\calE_\rho$}{black};
\end{plotpage}
\begin{plotpage}
{
	ymin = 0.005,
	ymax = 6.000
}
\plotMRE{figures/standing/data/rho1.00_p0_r0.9/data.txt} \plotpin{0.4}{1cm}{0}{$\Lambda_\rho/\calE_\rho$}{black};
\plotRE {figures/standing/data/rho1.00_p0_r0.9/data.txt} \plotpin{0.4}{1cm}{-10}{$R/\calE_\rho$}{red};
\plotME {figures/standing/data/rho1.00_p0_r0.9/data.txt} \plotpin{0.7}{0.5cm}{-120}{$M/\calE_\rho$}{blue};
\end{plotpage}
\subcaption{$k=1$}

\begin{plotpage}
{
	ymax=0,
	ymin=1.e-8
}
\plotR{figures/standing/data/rho1.00_p1_r0.9/data.txt} \plotpin{0.2}{0.5cm}{-90}{$R$}{red};
\plotM{figures/standing/data/rho1.00_p1_r0.9/data.txt} \plotpin{0.05}{0.5cm}{-90}{$M$}{blue};
\plotF{figures/standing/data/rho1.00_p1_r0.9/data.txt} \plotpin{0.6}{0.5cm}{-135}{$\eta_f$}{black};
\plotE{figures/standing/data/rho1.00_p1_r0.9/data.txt} \plotpin{0.6}{0.5cm}{45}{$\calE_\rho$}{black};
\end{plotpage}
\begin{plotpage}
{
	ymin = 0.005,
	ymax = 6.000
}
\plotMRE{figures/standing/data/rho1.00_p1_r0.9/data.txt} \plotpin{0.6}{1.5cm}{-90}{$\Lambda_\rho/\calE_\rho$}{black};
\plotRE {figures/standing/data/rho1.00_p1_r0.9/data.txt} \plotpin{0.9}{1cm}{-90}{$R/\calE_\rho$}{red};
\plotME {figures/standing/data/rho1.00_p1_r0.9/data.txt} \plotpin{0.1}{0.5cm}{-70}{$M/\calE_\rho$}{blue};
\end{plotpage}
\subcaption{$k=2$}

\begin{plotpage}
{
	ymax=0,
	ymin=1.e-8
}
\plotR{figures/standing/data/rho1.00_p2_r0.9/data.txt} \plotpin{0.2}{0.5cm}{-90}{$R$}{red};
\plotM{figures/standing/data/rho1.00_p2_r0.9/data.txt} \plotpin{0.05}{0.5cm}{-90}{$M$}{blue};
\plotF{figures/standing/data/rho1.00_p2_r0.9/data.txt} \plotpin{0.7}{0.5cm}{-90}{$\eta_f$}{black};
\plotE{figures/standing/data/rho1.00_p2_r0.9/data.txt} \plotpin{0.6}{0.5cm}{45}{$\calE_\rho$}{black};
\end{plotpage}
\begin{plotpage}
{
	ymin = 0.005,
	ymax = 6.000
}
\plotRE {figures/standing/data/rho1.00_p2_r0.9/data.txt} \plotpin{0.6}{0.5cm}{-90}{$R/\calE_\rho$}{red};
\plotME {figures/standing/data/rho1.00_p2_r0.9/data.txt} \plotpin{0.85}{0.5cm}{10}{$M/\calE_\rho$}{blue};
\plotMRE{figures/standing/data/rho1.00_p2_r0.9/data.txt} \plotpin{0.2}{2.0cm}{-90}{$\Lambda_\rho/\calE_\rho$}{black};
\end{plotpage}
\subcaption{$k=3$}

\begin{plotpage}
{
	ymax=0,
	ymin=1.e-8
}
\plotR{figures/standing/data/rho1.00_p2_sts/data.txt} \plotpin{1.00}{0.25cm}{0}{$R$}{red};
\plotM{figures/standing/data/rho1.00_p2_sts/data.txt} \plotpin{0.05}{0.5cm}{-90}{$M$}{blue};
\plotF{figures/standing/data/rho1.00_p2_sts/data.txt} \plotpin{0.6}{0.5cm}{-120}{$\eta_f$}{black};
\plotE{figures/standing/data/rho1.00_p2_sts/data.txt} \plotpin{0.6}{0.5cm}{45}{$\calE_\rho$}{black};
\end{plotpage}
\begin{plotpage}
{
	ymin = 0.005,
	ymax = 6.000
}
\plotRE {figures/standing/data/rho1.00_p2_sts/data.txt} \plotpin{0.95}{0.5cm}{-90}{$R/\calE_\rho$}{red};
\plotME {figures/standing/data/rho1.00_p2_sts/data.txt} \plotpin{0.2}{0.5cm}{-90}{$M/\calE_\rho$}{blue};
\plotMRE{figures/standing/data/rho1.00_p2_sts/data.txt} \plotpin{0.6}{1.5cm}{-90}{$\Lambda_\rho/\calE_\rho$}{black};
\end{plotpage}
\subcaption{$k=3$, $\tau^2 \sim h^3$}
}

\caption{Standing wave example with $\rho=1$}
\label{figure_standing_short}
\end{figure}

Still for the standing wave example, we now address long-time error control by
setting $\rho := 0.02$. From the results reported in Figure~\ref{figure_standing_long},
we again observe a superconvergence of the data oscillation term
$\eta_f$ for all the values of $k$. Asymptotically, the bound given by $\eta$ is 
indeed guaranteed, and the effectivity index is about 10 at worst.
For $k=1$, we see that $M$ is in fact the dominant part of the estimator, meaning
that the asymptotic regime where the time discretization error becomes negligible
has not yet been reached. This is in agreement with the above observation regarding
the convergence rates.
Interestingly, the cases where $k \geq 2$ show that the $M$ component of the
estimator is indeed required to obtain a guaranteed error upper bound. Indeed, in those cases,
$R/\calE_\rho \ll 1$, even asymptotically. The question of whether the factor $\sqrt{10}$
in the bound is sharp remains open.

\begin{figure}
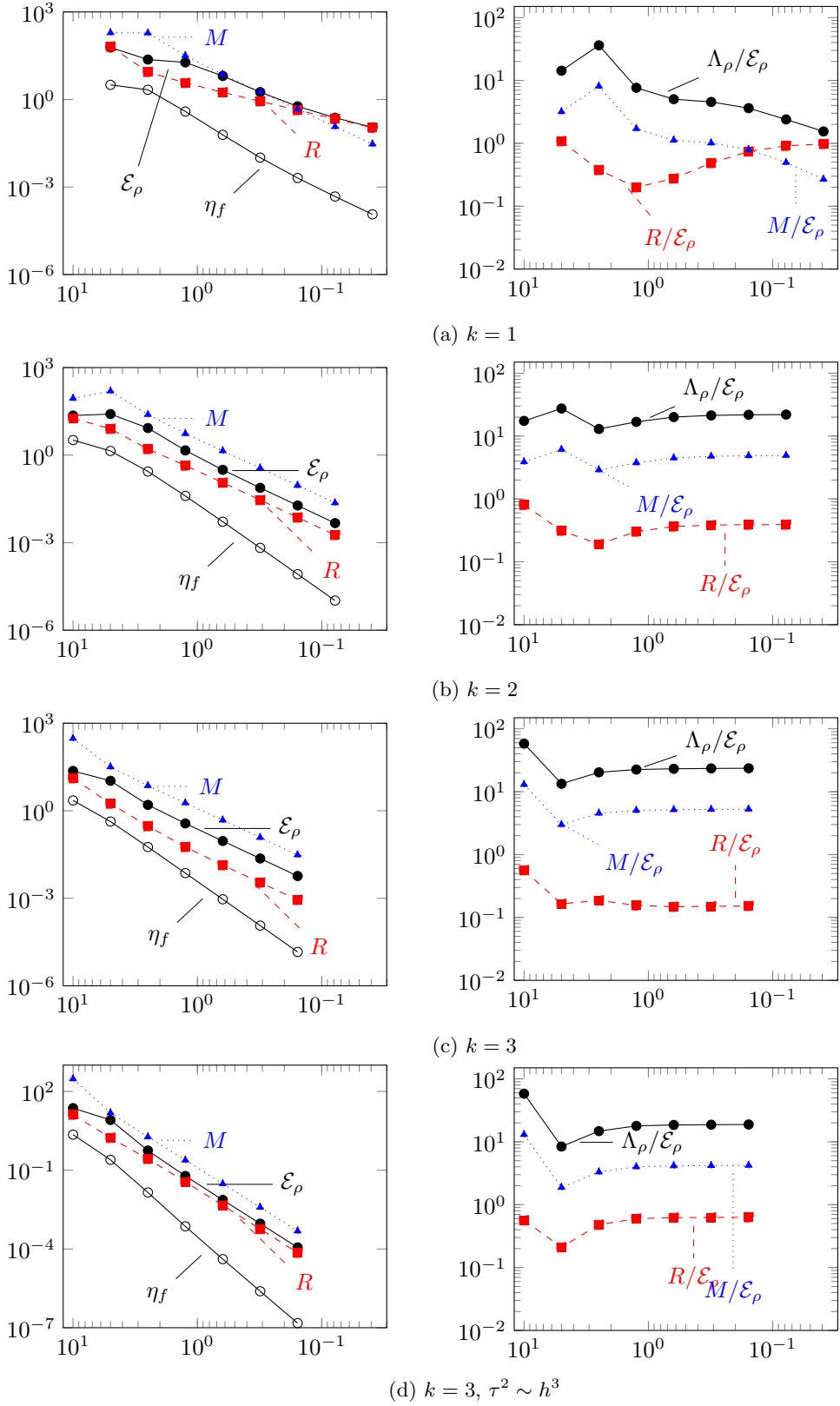

{
\begin{plotpage}
{
	ymin = 1.e-6,
	ymax = 1.e+3
}
\plotE{figures/standing/data/rho0.02_p0_r0.9/data.txt} \plotpin{0.2}{1.5cm}{-100}{$\calE_\rho$}{black};
\plotR{figures/standing/data/rho0.02_p0_r0.9/data.txt} \plotpin{0.65}{0.5cm}{-45}{$R$}{red};
\plotM{figures/standing/data/rho0.02_p0_r0.9/data.txt} \plotpin{0.1}{0.5cm}{0}{$M$}{blue};
\plotF{figures/standing/data/rho0.02_p0_r0.9/data.txt} \plotpin{0.6}{0.5cm}{-135}{$\eta_f$}{black};
\end{plotpage}
\begin{plotpage}
{
	ymin =   0.01,
	ymax = 150.00
}
\plotMRE{figures/standing/data/rho0.02_p0_r0.9/data.txt} \plotpin{0.5}{0.5cm}{20}{$\Lambda_\rho/\calE_\rho$}{black};
\plotRE {figures/standing/data/rho0.02_p0_r0.9/data.txt} \plotpin{0.3}{0.5cm}{-70}{$R/\calE_\rho$}{red};
\plotME {figures/standing/data/rho0.02_p0_r0.9/data.txt} \plotpin{0.90}{0.5cm}{-90}{$M/\calE_\rho$}{blue};
\end{plotpage}
\subcaption{$k=1$}

\begin{plotpage}
{
	ymin = 1.e-6,
	ymax = 1.e+3
}
\plotE{figures/standing/data/rho0.02_p1_r0.9/data.txt} \plotpin{0.55}{1cm}{0}{$\calE_\rho$}{black};
\plotR{figures/standing/data/rho0.02_p1_r0.9/data.txt} \plotpin{0.70}{1cm}{-45}{$R$}{red};
\plotM{figures/standing/data/rho0.02_p1_r0.9/data.txt} \plotpin{0.3}{0.5cm}{0}{$M$}{blue};
\plotF{figures/standing/data/rho0.02_p1_r0.9/data.txt} \plotpin{0.6}{0.5cm}{-135}{$\eta_f$}{black};
\end{plotpage}
\begin{plotpage}
{
	ymin =   0.01,
	ymax = 150.00
}
\plotMRE{figures/standing/data/rho0.02_p1_r0.9/data.txt} \plotpin{0.5}{0.4cm}{ 20}{$\Lambda_\rho/\calE_\rho$}{black};
\plotRE {figures/standing/data/rho0.02_p1_r0.9/data.txt} \plotpin{0.8}{0.5cm}{-90}{$R/\calE_\rho$}{red};
\plotME {figures/standing/data/rho0.02_p1_r0.9/data.txt} \plotpin{0.3}{0.5cm}{-30}{$M/\calE_\rho$}{blue};
\end{plotpage}
\subcaption{$k=2$}

\begin{plotpage}
{
	ymin = 1.e-6,
	ymax = 1.e+3
}
\plotE{figures/standing/data/rho0.02_p2_r0.9/data.txt} \plotpin{0.55}{1cm}{0}{$\calE_\rho$}{black};
\plotR{figures/standing/data/rho0.02_p2_r0.9/data.txt} \plotpin{0.80}{1cm}{-45}{$R$}{red};
\plotM{figures/standing/data/rho0.02_p2_r0.9/data.txt} \plotpin{0.4}{0.5cm}{  0}{$M$}{blue};
\plotF{figures/standing/data/rho0.02_p2_r0.9/data.txt} \plotpin{0.6}{0.5cm}{-135}{$\eta_f$}{black};
\end{plotpage}
\begin{plotpage}
{
	ymin =   0.01,
	ymax = 150.00
}
\plotMRE{figures/standing/data/rho0.02_p2_r0.9/data.txt} \plotpin{0.6}{0.5cm}{ 10}{$\Lambda_\rho/\calE_\rho$}{black};
\plotRE {figures/standing/data/rho0.02_p2_r0.9/data.txt} \plotpin{0.95}{0.5cm}{ 90}{$R/\calE_\rho$}{red};
\plotME {figures/standing/data/rho0.02_p2_r0.9/data.txt} \plotpin{0.3}{0.5cm}{-30}{$M/\calE_\rho$}{blue};
\end{plotpage}
\subcaption{$k=3$}

\begin{plotpage}
{
	ymin = 1.e-7,
	ymax = 1.e+3
}
\plotE{figures/standing/data/rho0.02_p2_sts/data.txt} \plotpin{0.55}{1cm}{0}{$\calE_\rho$}{black};
\plotR{figures/standing/data/rho0.02_p2_sts/data.txt} \plotpin{0.70}{1cm}{-45}{$R$}{red};
\plotM{figures/standing/data/rho0.02_p2_sts/data.txt} \plotpin{0.4}{0.5cm}{  0}{$M$}{blue};
\plotF{figures/standing/data/rho0.02_p2_sts/data.txt} \plotpin{0.6}{0.5cm}{-135}{$\eta_f$}{black};
\end{plotpage}
\begin{plotpage}
{
	ymin =   0.01,
	ymax = 150.00
}
\plotRE {figures/standing/data/rho0.02_p2_sts/data.txt} \plotpin{0.80}{0.5cm}{-90}{$R/\calE_\rho$}{red};
\plotME {figures/standing/data/rho0.02_p2_sts/data.txt} \plotpin{0.95}{1.5cm}{-90}{$M/\calE_\rho$}{blue};
\plotMRE{figures/standing/data/rho0.02_p2_sts/data.txt} \plotpin{0.40}{0.5cm}{  0}{$\Lambda_\rho/\calE_\rho$}{black};
\end{plotpage}
\subcaption{$k=3$, $\tau^2 \sim h^3$}
}

\caption{Standing wave example with $\rho=0.02$}
\label{figure_standing_long}
\end{figure}

We now consider the propagating wave example. Addressing first short-time error control,
we set $\rho:=0.2$. Results are reported in Figure~\ref{figure_propagating_short}.
Asymptotically, we obtain guaranteed error bounds
with effectivity indices of $10$ at worst.

\begin{figure}
{
\begin{plotpage}
{
	ymin = 1.e-7,
	ymax = 10
}
\plotR{figures/propagating/data/rho0.20_p0_r0.9/data.txt} \plotpin{0.5}{1.0cm}{-135}{$R$}{red};
\plotM{figures/propagating/data/rho0.20_p0_r0.9/data.txt} \plotpin{0.15}{0.5cm}{0}{$M$}{blue};
\plotF{figures/propagating/data/rho0.20_p0_r0.9/data.txt} \plotpin{0.6}{0.5cm}{-135}{$\eta_f$}{black};
\plotE{figures/propagating/data/rho0.20_p0_r0.9/data.txt} \plotpin{0.55}{0.5cm}{10}{$\calE_\rho$}{black};
\end{plotpage}
\begin{plotpage}
{
	ymin =  0.02,
	ymax = 40.00
}
\plotRE{figures/propagating/data/rho0.20_p0_r0.9/data.txt} \plotpin{0.3}{0.5cm}{-90}{$R/\calE_\rho$}{red};
\plotME{figures/propagating/data/rho0.20_p0_r0.9/data.txt} \plotpin{0.8}{0.5cm}{-110}{$M/\calE_\rho$}{blue};
\plotMRE{figures/propagating/data/rho0.20_p0_r0.9/data.txt} \plotpin{0.4}{0.5cm}{10}{$\Lambda_\rho/\calE_\rho$}{black};
\end{plotpage}
\subcaption{$k=1$}

\begin{plotpage}
{
	ymin = 1.e-7,
	ymax = 10
}
\plotR{figures/propagating/data/rho0.20_p1_r0.9/data.txt} \plotpin{0.70}{1cm}{-45}{$R$}{red};
\plotM{figures/propagating/data/rho0.20_p1_r0.9/data.txt} \plotpin{0.3}{0.5cm}{0}{$M$}{blue};
\plotF{figures/propagating/data/rho0.20_p1_r0.9/data.txt} \plotpin{0.6}{0.5cm}{-135}{$\eta_f$}{black};
\plotE{figures/propagating/data/rho0.20_p1_r0.9/data.txt} \plotpin{0.55}{1cm}{0}{$\calE_\rho$}{black};
\end{plotpage}
\begin{plotpage}
{
	ymin =   0.02,
	ymax = 40.00
}
\plotRE{figures/propagating/data/rho0.20_p1_r0.9/data.txt} \plotpin{0.8}{0.5cm}{-90}{$R/\calE_\rho$}{red};
\plotME{figures/propagating/data/rho0.20_p1_r0.9/data.txt} \plotpin{0.4}{1.5cm}{-90}{$M/\calE_\rho$}{blue};
\plotMRE{figures/propagating/data/rho0.20_p1_r0.9/data.txt} \plotpin{0.5}{0.5cm}{ 20}{$\Lambda_\rho/\calE_\rho$}{black};
\end{plotpage}
\subcaption{$k=2$}

\begin{plotpage}
{
	ymin = 1.e-7,
	ymax = 10
}
\plotR{figures/propagating/data/rho0.20_p2_r0.9/data.txt} \plotpin{0.70}{1cm}{-45}{$R$}{red};
\plotM{figures/propagating/data/rho0.20_p2_r0.9/data.txt} \plotpin{0.4}{0.5cm}{  0}{$M$}{blue};
\plotF{figures/propagating/data/rho0.20_p2_r0.9/data.txt} \plotpin{0.6}{0.5cm}{-135}{$\eta_f$}{black};
\plotE{figures/propagating/data/rho0.20_p2_r0.9/data.txt} \plotpin{0.55}{1cm}{0}{$\calE_\rho$}{black};
\end{plotpage}
\begin{plotpage}
{
	ymin =   0.02,
	ymax = 40.00
}
\plotRE{figures/propagating/data/rho0.20_p2_r0.9/data.txt} \plotpin{0.3}{0.5cm}{-90}{$R/\calE_\rho$}{red};
\plotME{figures/propagating/data/rho0.20_p2_r0.9/data.txt} \plotpin{0.6}{0.5cm}{-45}{$M/\calE_\rho$}{blue};
\plotMRE{figures/propagating/data/rho0.20_p2_r0.9/data.txt} \plotpin{0.9}{0.5cm}{-45}{$\Lambda_\rho/\calE_\rho$}{black};
\end{plotpage}
\subcaption{$k=3$}

\begin{plotpage}
{
	ymin = 1.e-7,
	ymax = 10
}
\plotR{figures/propagating/data/rho0.20_p2_sts/data.txt} \plotpin{0.15}{1.0cm}{-110}{$R$}{red};
\plotM{figures/propagating/data/rho0.20_p2_sts/data.txt} \plotpin{0.00}{0.5cm}{   0}{$M$}{blue};
\plotF{figures/propagating/data/rho0.20_p2_sts/data.txt} \plotpin{0.60}{0.5cm}{-135}{$\eta_f$}{black};
\plotE{figures/propagating/data/rho0.20_p2_sts/data.txt} \plotpin{0.25}{0.5cm}{   0}{$\calE_\rho$}{black};
\end{plotpage}
\begin{plotpage}
{
	ymin =   0.02,
	ymax = 40.00
}
\plotRE{figures/propagating/data/rho0.20_p2_sts/data.txt} \plotpin{0.2}{0.5cm}{-90}{$R/\calE_\rho$}{red};
\plotME{figures/propagating/data/rho0.20_p2_sts/data.txt} \plotpin{0.6}{0.5cm}{-45}{$M/\calE_\rho$}{blue};
\plotMRE{figures/propagating/data/rho0.20_p2_sts/data.txt} \plotpin{0.9}{0.5cm}{45}{$\Lambda_\rho/\calE_\rho$}{black};
\end{plotpage}
\subcaption{$k=3$, $\tau^2 \sim h^3$}
}

\caption{Propagating wave example with $\rho=0.2$}
\label{figure_propagating_short}
\end{figure}

Finally, to study long-time error control, we set $\rho:=0.01$. 
Results are reported in Figure~\ref{figure_propagating_long}.
We obtain asymptotically a guaranteed error upper bound
with effectivity indices of about $10$, except for $k=1$ where the asymptotic
regime is not yet reached.

\begin{figure}
{
\begin{plotpage}
{
	ymin = 1.e-6,
	ymax = 1.e+3
}
\plotE{figures/propagating/data/rho0.01_p0_r0.9/data.txt} \plotpin{0.4}{1cm}{10}{$\calE_\rho$}{black};
\plotR{figures/propagating/data/rho0.01_p0_r0.9/data.txt} \plotpin{0.65}{0.5cm}{-45}{$R$}{red};
\plotM{figures/propagating/data/rho0.01_p0_r0.9/data.txt} \plotpin{0.2}{0.5cm}{0}{$M$}{blue};
\plotF{figures/propagating/data/rho0.01_p0_r0.9/data.txt} \plotpin{0.6}{0.5cm}{-135}{$\eta_f$}{black};
\end{plotpage}
\begin{plotpage}
{
	ymin =   0.02,
	ymax = 300.00
}
\plotMRE{figures/propagating/data/rho0.01_p0_r0.9/data.txt} \plotpin{0.5}{0.5cm}{0}{$\Lambda_\rho/\calE_\rho$}{black};
\plotRE{figures/propagating/data/rho0.01_p0_r0.9/data.txt} \plotpin{0.3}{0.5cm}{-70}{$R/\calE_\rho$}{red};
\plotME{figures/propagating/data/rho0.01_p0_r0.9/data.txt} \plotpin{0.35}{0.5cm}{-110}{$M/\calE_\rho$}{blue};
\end{plotpage}
\subcaption{$k=1$}

\begin{plotpage}
{
	ymin = 1.e-6,
	ymax = 1.e+3
}
\plotE{figures/propagating/data/rho0.01_p1_r0.9/data.txt} \plotpin{0.55}{1cm}{0}{$\calE_\rho$}{black};
\plotR{figures/propagating/data/rho0.01_p1_r0.9/data.txt} \plotpin{0.55}{1cm}{-45}{$R$}{red};
\plotM{figures/propagating/data/rho0.01_p1_r0.9/data.txt} \plotpin{0.3}{0.5cm}{0}{$M$}{blue};
\plotF{figures/propagating/data/rho0.01_p1_r0.9/data.txt} \plotpin{0.6}{0.5cm}{-135}{$\eta_f$}{black};
\end{plotpage}
\begin{plotpage}
{
	ymin =   0.02,
	ymax = 300.00
}
\plotMRE{figures/propagating/data/rho0.01_p1_r0.9/data.txt} \plotpin{0.5}{0.5cm}{ 20}{$\Lambda_\rho/\calE_\rho$}{black};
\plotRE{figures/propagating/data/rho0.01_p1_r0.9/data.txt} \plotpin{0.8}{0.5cm}{ 90}{$R/\calE_\rho$}{red};
\plotME{figures/propagating/data/rho0.01_p1_r0.9/data.txt} \plotpin{0.3}{0.5cm}{-90}{$M/\calE_\rho$}{blue};
\end{plotpage}
\subcaption{$k=2$}

\begin{plotpage}
{
	ymin = 1.e-6,
	ymax = 1.e+3
}
\plotE{figures/propagating/data/rho0.01_p2_r0.9/data.txt} \plotpin{0.55}{1cm}{0}{$\calE_\rho$}{black};
\plotR{figures/propagating/data/rho0.01_p2_r0.9/data.txt} \plotpin{0.70}{1cm}{-45}{$R$}{red};
\plotM{figures/propagating/data/rho0.01_p2_r0.9/data.txt} \plotpin{0.4}{0.5cm}{  0}{$M$}{blue};
\plotF{figures/propagating/data/rho0.01_p2_r0.9/data.txt} \plotpin{0.6}{0.5cm}{-135}{$\eta_f$}{black};
\end{plotpage}
\begin{plotpage}
{
	ymin =   0.02,
	ymax = 300.00
}
\plotMRE{figures/propagating/data/rho0.01_p2_r0.9/data.txt} \plotpin{0.6}{0.5cm}{ 10}{$\Lambda_\rho/\calE_\rho$}{black};
\plotRE{figures/propagating/data/rho0.01_p2_r0.9/data.txt} \plotpin{0.95}{0.5cm}{ 90}{$R/\calE_\rho$}{red};
\plotME{figures/propagating/data/rho0.01_p2_r0.9/data.txt} \plotpin{0.6}{0.5cm}{-90}{$M/\calE_\rho$}{blue};
\end{plotpage}
\subcaption{$k=3$}

\begin{plotpage}
{
	ymin = 1.e-6,
	ymax = 1.e+3
}
\plotE{figures/propagating/data/rho0.01_p2_sts/data.txt} \plotpin{0.55}{1cm}{0}{$\calE_\rho$}{black};
\plotR{figures/propagating/data/rho0.01_p2_sts/data.txt} \plotpin{0.70}{1cm}{-45}{$R$}{red};
\plotM{figures/propagating/data/rho0.01_p2_sts/data.txt} \plotpin{0.4}{0.5cm}{  0}{$M$}{blue};
\plotF{figures/propagating/data/rho0.01_p2_sts/data.txt} \plotpin{0.6}{0.5cm}{-135}{$\eta_f$}{black};
\end{plotpage}
\begin{plotpage}
{
	ymin =   0.02,
	ymax = 300.00
}
\plotMRE{figures/propagating/data/rho0.01_p2_sts/data.txt} \plotpin{0.60}{0.5cm}{ 90}{$\Lambda_\rho/\calE_\rho$}{black};
\plotRE {figures/propagating/data/rho0.01_p2_sts/data.txt} \plotpin{0.95}{0.5cm}{-90}{$R/\calE_\rho$}{red};
\plotME {figures/propagating/data/rho0.01_p2_sts/data.txt} \plotpin{0.60}{0.5cm}{  0}{$M/\calE_\rho$}{blue};
\end{plotpage}
\subcaption{$k=3$, $\tau^2 \sim h^3$}
}

\caption{Propagating wave example with $\rho=0.01$}
\label{figure_propagating_long}
\end{figure}

\subsection{Time-step variation}

Finally, we investigate the effect of reducing the time step on fixed meshes
for $k\in\{2,3\}$.
\rev{The goal here is to numerically illustrate that for a small time step,
the proposed estimator for the fully discrete scheme approaches the estimator
of~\cite{Theo:23} when the time-discretization error becomes negligible.}
To do so, for quadratic elements, we fix
$\tau$ using the CFL condition with $r=0.9,0.8,0.5,0.2$, and $0.1$.
For cubic elements, we select $\tau^2 \sim h^3$ as described above
with $r_0=0.9,0.8,0.5,0.2$, and $0.1$. We set $\rho:=0.05$.
As can be seen in Figure \ref{figure_time_step}, reducing the
time step has little effect on the actual error itself. However,
it does improve the efficiency of the estimator. In particular,
we obtain asymptotically constant-free error upper bounds when $\tau$ is
selected ten times smaller than the CFL constraint. 

\begin{figure}
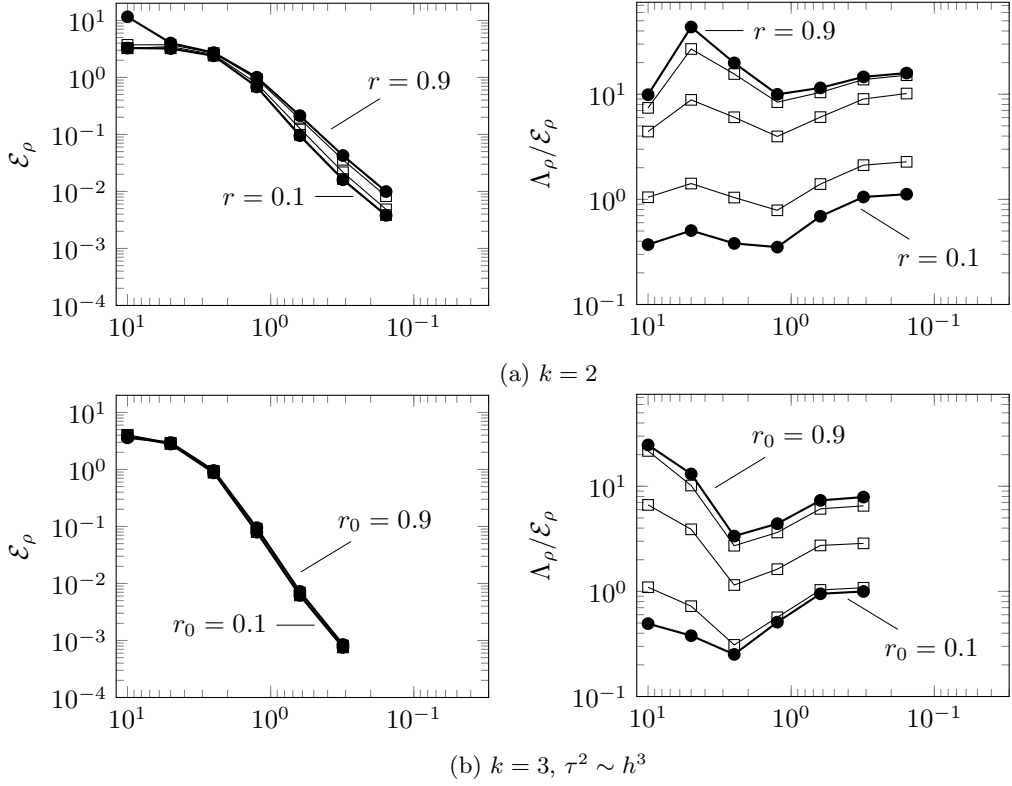

{
\begin{plotpage}
{
	ymin=1.e-4,
	ymax=20,
	ylabel=$\calE_\rho$
}
\plotCFLE{figures/propagating/data/rho0.05_p1/data_0.800.txt}{thin,mark=square};
\plotCFLE{figures/propagating/data/rho0.05_p1/data_0.500.txt}{thin,mark=square};
\plotCFLE{figures/propagating/data/rho0.05_p1/data_0.200.txt}{thin,mark=square};
\plotCFLE{figures/propagating/data/rho0.05_p1/data_0.900.txt}{thick,mark=*}
\plotpin{0.7}{0.5cm}{ 45}{$r = 0.9$}{black};
\plotCFLE{figures/propagating/data/rho0.05_p1/data_0.100.txt}{thick,mark=*}
\plotpin{0.9}{0.5cm}{180}{$r = 0.1$}{black};
\end{plotpage}
\begin{plotpage}
{
	ymin=0.1,
	ymax=75,
	ylabel=$\Lambda_\rho/\calE_\rho$
}
\plotCFLMRE{figures/propagating/data/rho0.05_p1/data_0.800.txt}{thin,mark=square};
\plotCFLMRE{figures/propagating/data/rho0.05_p1/data_0.500.txt}{thin,mark=square};
\plotCFLMRE{figures/propagating/data/rho0.05_p1/data_0.200.txt}{thin,mark=square};
\plotCFLMRE{figures/propagating/data/rho0.05_p1/data_0.900.txt}{thick,mark=*}
\plotpin{0.3}{0.5cm}{  0}{$r = 0.9$}{black};
\plotCFLMRE{figures/propagating/data/rho0.05_p1/data_0.100.txt}{thick,mark=*}
\plotpin{0.8}{0.5cm}{-45}{$r = 0.1$}{black};
\end{plotpage}
\subcaption{$k=2$}

\begin{plotpage}
{
	ymin=1.e-4,
	ymax=20,
	ylabel=$\calE_\rho$
}
\plotCFLE{figures/propagating/data/rho0.05_p2/data_0.800.txt}{thin,mark=square};
\plotCFLE{figures/propagating/data/rho0.05_p2/data_0.500.txt}{thin,mark=square};
\plotCFLE{figures/propagating/data/rho0.05_p2/data_0.200.txt}{thin,mark=square};
\plotCFLE{figures/propagating/data/rho0.05_p2/data_0.900.txt}{thick,mark=*}
\plotpin{0.7}{0.5cm}{ 45}{$r_0 = 0.9$}{black};
\plotCFLE{figures/propagating/data/rho0.05_p2/data_0.100.txt}{thick,mark=*}
\plotpin{0.9}{0.5cm}{180}{$r_0 = 0.1$}{black};
\end{plotpage}
\begin{plotpage}
{
	ymin=0.1,
	ymax=75,
	ylabel=$\Lambda_\rho/\calE_\rho$
}
\plotCFLMRE{figures/propagating/data/rho0.05_p2/data_0.800.txt}{thin,mark=square};
\plotCFLMRE{figures/propagating/data/rho0.05_p2/data_0.500.txt}{thin,mark=square};
\plotCFLMRE{figures/propagating/data/rho0.05_p2/data_0.200.txt}{thin,mark=square};
\plotCFLMRE{figures/propagating/data/rho0.05_p2/data_0.900.txt}{thick,mark=*}
\plotpin{0.3}{0.5cm}{45}{$r_0 = 0.9$}{black};
\plotCFLMRE{figures/propagating/data/rho0.05_p2/data_0.100.txt}{thick,mark=*}
\plotpin{0.9}{0.5cm}{-45}{$r_0 = 0.1$}{black};
\end{plotpage}
\subcaption{$k=3$, $\tau^2 \sim h^3$}
}

\caption{Time-step variation in the propagating wave example, $\rho=0.05$}
\label{figure_time_step}
\end{figure}

\section{Damped stability \rev{and a priori error} estimates}
\label{section_stability_leapfrog}

This section collects some stability \rev{and error} estimates for the leapfrog scheme in the damped
energy norm. The \rev{stability} estimates are essentially a discrete counterpart of the a priori estimate
established in Lemma~\ref{lem:a_priori} at the continuous level. They are established
by adapting known arguments devised for leapfrog stability estimates in the classical energy norm.

\subsection{Abstract estimate}
\label{sec:abstract}

Let $(\sfG^n)_{n\in\polN}$ be a sequence in $L^2(\Omega)$.
Let $(\sfX^n)_{n\in\polN}\subset V_h^\polN$ be such that 
\begin{equation} \label{eq:leapfrog_X}
\frac{1}{\tau^{2}}(\sfX^{n+1}-2\sfX^n+\sfX^{n-1},v_h)_\Omega 
+ (\nabla \sfX^n,\nabla v_h)_\Omega = (\sfG^n,v_h)_\Omega \quad
\forall n\ge 0,\; \forall v_h\in V_h,
\end{equation}
with the initial condition $\sfX^0=\sfX^{-1}=0$.
Notice that here we do not assume that $\sfG^0=0$, and consequently we can have $\sfX^1\ne0$.
For all $n\ge-1$, we define the following midpoint state and velocity:
\begin{equation}
\sfX\nmz := \frac12 (\sfX^{n+1}+\sfX^n), \quad
\dsfX\nmz := \frac{1}{\tau}(\sfX^{n+1}-\sfX^n),
\end{equation}
as well as the discrete energy functional
\begin{equation} \label{eq:def_disc_engy}
E_\sfX\nmz := m_{h\tau}(\dsfX\nmz,\dsfX\nmz) + \|\nabla \sfX\nmz\|^2_\Omega,
\end{equation}
with the bilinear form $m_{h\tau}$ defined in~\eqref{eq:def_mht}. Recall that
the discrete energy functional 
defines a quadratic form on $\dsfX\nmz$ and $\nabla \sfX\nmz$
under the CFL condition~\eqref{eq:CFL} since \eqref{eq:mht_L2} implies that
\begin{equation} \label{eq:tildeE}
E_\sfX\nmz \ge \mu_0 \|\dsfX\nmz\|^2_\Omega + \|\nabla \sfX\nmz\|^2_\Omega.
\end{equation}
with $\mu_0\in (0,1)$.

\begin{lemma}[Damped stability for leapfrog scheme] \label{lem:damped_leapfrog}
Assume the CFL condition~\eqref{eq:CFL}. 
Let $(\sfX^n)_{n\in\polN}$ solve~\eqref{eq:leapfrog_X} with $\sfX^0=\sfX^{-1}=0$,
and let the discrete energy
functional $E_\sfX\nmz$ be defined in~\eqref{eq:def_disc_engy}. 
Let $\theta:=\rho \tau \in (0,1]$ and set
$C_\sfX(\theta) := \frac{169}{12}(13-6\theta)^{-1}\theta(1-e^{-\theta})^{-1}$. The following holds:
\begin{equation} \label{eq:disc_stab_leapfrog}
\sum_{n\in\polN} \tau E_\sfX\nmz e^{-2\rho t^n}
\le C_\sfX(\theta) \mu_0^{-1} \frac{1}{\rho^2} \sum_{n\in\polN} \tau 
\|\sfG^n\|_\Omega^2 e^{-2\rho t^n}.
\end{equation}
\end{lemma}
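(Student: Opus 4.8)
The plan is to establish a fully discrete analogue of the a priori estimate of Lemma~\ref{lem:a_priori}, in three steps, and then to bound the resulting explicit constant.
\emph{Step 1 (energy balance).} First I would test~\eqref{eq:leapfrog_X} at step $n$ with the discrete centered velocity $v_h := \tfrac1\tau(\sfX^{n+1}-\sfX^{n-1}) = \dsfX\nmz+\dsfX\nmzm \in V_h$. Since $\sfX^{n+1}-2\sfX^n+\sfX^{n-1} = \tau(\dsfX\nmz-\dsfX\nmzm)$, the inertial term becomes $\tfrac1\tau(\|\dsfX\nmz\|_\Omega^2-\|\dsfX\nmzm\|_\Omega^2)$ and, by symmetry of $(\nabla\cdot,\nabla\cdot)_\Omega$, the stiffness term becomes $\tfrac1\tau\big((\nabla\sfX^{n+1},\nabla\sfX^n)_\Omega-(\nabla\sfX^n,\nabla\sfX^{n-1})_\Omega\big)$. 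Introducing the (genuinely conserved) energy $\widehat E_\sfX\nmz := \|\dsfX\nmz\|_\Omega^2+(\nabla\sfX^{n+1},\nabla\sfX^n)_\Omega$, this yields the identity $\widehat E_\sfX\nmz-\widehat E_\sfX\nmzm = \tau(\sfG^n,\dsfX\nmz+\dsfX\nmzm)_\Omega$ for all $n\in\polN$, with $\widehat E_\sfX^{-\frac12}=0$ because $\sfX^0=\sfX^{-1}=0$. Writing $\sfX^{n\pm1}=\sfX\nmz\pm\tfrac\tau2\dsfX\nmz$ gives $(\nabla\sfX^{n+1},\nabla\sfX^n)_\Omega=\|\nabla\sfX\nmz\|_\Omega^2-\tfrac{\tau^2}4\|\nabla\dsfX\nmz\|_\Omega^2$, hence $\widehat E_\sfX\nmz = E_\sfX\nmz+\tfrac34\tau^2\|\nabla\dsfX\nmz\|_\Omega^2 \ge E_\sfX\nmz$; moreover, under the CFL condition (which via~\eqref{eq:mht_L2} gives $\tau^2\|\nabla v_h\|_\Omega^2\le(1-\mu_0)\|v_h\|_\Omega^2$), one gets $\widehat E_\sfX\nmz \ge \mu_0\|\dsfX\nmz\|_\Omega^2 + \|\nabla\sfX\nmz\|_\Omega^2$, so $\|\dsfX\nmz\|_\Omega \le \mu_0^{-1/2}(\widehat E_\sfX\nmz)^{1/2}$.

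\emph{Step 2 (Gronwall).} Then the balance and the last bound give $\widehat E_\sfX\nmz \le \widehat E_\sfX\nmzm + \tau\mu_0^{-1/2}\|\sfG^n\|_\Omega\big((\widehat E_\sfX\nmz)^{1/2}+(\widehat E_\sfX\nmzm)^{1/2}\big)$. Setting $a_n:=(\widehat E_\sfX\nmz)^{1/2}$ and $g_n:=\tau\mu_0^{-1/2}\|\sfG^n\|_\Omega$, this rearranges into $(a_n-\tfrac{g_n}2)^2\le(a_{n-1}+\tfrac{g_n}2)^2$, which forces $a_n\le a_{n-1}+g_n$; since $a_{-1}=0$, iterating gives $\widehat E_\sfX\nmz \le \mu_0^{-1}\tau^2\big(\sum_{m=0}^n\|\sfG^m\|_\Omega\big)^2$ for all $n\in\polN$.

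\emph{Step 3 (damped summation).} Finally, I would multiply by $e^{-2\rho t^n}$, sum over $n\in\polN$, and split $e^{-2\rho t^n}=e^{-\rho t^n}e^{-\rho t^n}$ so that $\big(\sum_{m=0}^n\|\sfG^m\|_\Omega\big)e^{-\rho t^n} = \sum_{m=0}^n\big(\|\sfG^m\|_\Omega e^{-\rho t^m}\big)\,q^{\,n-m}$ with $q:=e^{-\rho\tau}=e^{-\theta}\in(0,1)$, i.e.\ a discrete convolution of $(\|\sfG^m\|_\Omega e^{-\rho t^m})_m$ with the geometric kernel $(q^k)_{k\ge0}$. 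Young's convolution inequality and $\sum_{k\ge0}q^k=(1-e^{-\theta})^{-1}$ then give
\[
\sum_{n\in\polN}\tau\,\widehat E_\sfX\nmz\,e^{-2\rho t^n} \le \frac{\tau^2}{\mu_0}\sum_{n\in\polN}\tau\Big(\sum_{m=0}^n\|\sfG^m\|_\Omega\,e^{-\rho t^n}\Big)^2 \le \frac{\tau^2}{\mu_0\,(1-e^{-\theta})^2}\sum_{n\in\polN}\tau\,\|\sfG^n\|_\Omega^2\,e^{-2\rho t^n}.
\]
Using $E_\sfX\nmz\le\widehat E_\sfX\nmz$ and $\tau=\theta/\rho$, this is exactly~\eqref{eq:disc_stab_leapfrog} with the constant $\tfrac{\theta^2}{(1-e^{-\theta})^2}\mu_0^{-1}\rho^{-2}$; it then only remains to check the elementary scalar inequality $\tfrac{\theta}{1-e^{-\theta}}\le\tfrac{169}{12(13-6\theta)}$ on $(0,1]$ in order to replace one factor $\tfrac{\theta}{1-e^{-\theta}}$ by the rational bound built into $C_\sfX(\theta)$.

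The hard part is conceptual rather than computational: the quantity $E_\sfX\nmz$ that is to be bounded is \emph{not} the conserved energy of the leapfrog scheme, so the argument must be routed through $\widehat E_\sfX\nmz = E_\sfX\nmz+\tfrac34\tau^2\|\nabla\dsfX\nmz\|_\Omega^2$, and the CFL condition is used twice — to make $\widehat E_\sfX\nmz$ coercive and to dominate $\|\dsfX\nmz\|_\Omega$ by it. The rest is bookkeeping, the only slightly delicate point being to keep the constant explicit and uniform in $\theta$, which is precisely what recognizing the time-weighted partial sum as a geometric-kernel convolution achieves (producing the sharp $(1-e^{-\theta})^{-1}$ factor). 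The possibility $\sfG^0\ne0$ (hence $\sfX^1\ne0$) needs no special treatment, being already carried by the $m=0$ term of the energy balance.
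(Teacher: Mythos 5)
Your proof is correct, and it follows a genuinely different route from the paper's in two respects. First, after deriving the energy balance by testing with the centered velocity, you work with the quantity $\widehat E_\sfX\nmz = \|\dsfX\nmz\|_\Omega^2 + (\nabla\sfX^{n+1},\nabla\sfX^n)_\Omega$, which is the one that telescopes exactly, and you only pass to $E_\sfX\nmz$ at the end via $E_\sfX\nmz \le \widehat E_\sfX\nmz$; the paper instead asserts the telescoping identity directly for $E_\sfX\nmz$ as defined through $m_{h\tau}$ (with the factor $\tau^2$ rather than $\tau^2/4$ in front of the gradient term), for which the identity holds only up to a telescoping defect $-\frac34\tau^2\big(\|\nabla\dsfX\nmz\|_\Omega^2-\|\nabla\dsfX\nmzm\|_\Omega^2\big)$ -- so your routing through $\widehat E_\sfX\nmz$ is the safer bookkeeping. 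Second, your Gronwall step factors the quadratic recursion into $a_n\le a_{n-1}+g_n$ (the square-root trick), which avoids the Young inequality with the tuned parameter $\gamma=\tfrac{13}{24}$ used in the paper, and your damped summation is handled by the $\ell^1$--$\ell^2$ convolution inequality with the geometric kernel $(e^{-\theta k})_{k\ge0}$ rather than by the paper's induction-plus-sum-exchange. The payoff is a sharper constant, $\theta^2(1-e^{-\theta})^{-2}$ instead of $C_\sfX(\theta)$, with limit $1$ rather than $\tfrac{13}{12}$ as $\theta\to0^+$ (i.e., you recover exactly the continuous-level constant of Lemma~\ref{lem:a_priori}); the price is the one remaining scalar inequality $\tfrac{\theta}{1-e^{-\theta}}\le\tfrac{169}{12(13-6\theta)}$ on $(0,1]$, which you correctly flag and which does hold: setting $\varphi(\theta):=169(1-e^{-\theta})-12\theta(13-6\theta)$, one has $\varphi(0)=0$ and $\varphi'(\theta)=169e^{-\theta}-156+144\theta$ attains its minimum at $\theta=\ln(169/144)$ with value $144-156+144\ln(169/144)>0$, so $\varphi>0$ on $(0,1]$. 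All the intermediate claims (the identity $\widehat E_\sfX\nmz=E_\sfX\nmz+\tfrac34\tau^2\|\nabla\dsfX\nmz\|_\Omega^2$, the coercivity $\widehat E_\sfX\nmz\ge\mu_0\|\dsfX\nmz\|_\Omega^2+\|\nabla\sfX\nmz\|_\Omega^2$ under \eqref{eq:mht_L2}, and the convolution bound) check out.
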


\begin{proof}
The starting point is the following well-known energy identity for the leapfrog scheme: 
\[
E_\sfX\nmz - E_\sfX\nmzm = (\sfG^n,\sfX^{n+1}-\sfX^{n-1})_\Omega
= \tau (\sfG^n,\dsfX\nmz+\dsfX\nmzm)_\Omega \quad \forall n\in\polN.
\]
Invoking the Cauchy--Schwarz and Young's inequality (with parameter $\gamma>\frac14$) together with \eqref{eq:tildeE} gives
\begin{align*}
\tau |(\sfG^n,\dsfX\nmz+\dsfX\nmzm)_\Omega|
&\le \tau \|\sfG^n\|_\Omega\|\dsfX\nmz\|_\Omega + \tau \|\sfG^n\|_\Omega\|\dsfX\nmzm\|_\Omega \\
&\le 2\gamma\mu_0^{-1} \frac{\tau}{\rho}\|\sfG^n\|_\Omega^2 + \frac{1}{4\gamma} \rho \tau \mu_0 \|\dsfX\nmz\|_\Omega^2  + \frac{1}{4\gamma} \rho \tau \mu_0 \|\dsfX\nmzm\|_\Omega^2 \\
&\le 2\gamma\mu_0^{-1} \frac{\tau}{\rho}\|\sfG^n\|_\Omega^2 + \frac{1}{4\gamma} \rho \tau E_\sfX\nmz + \frac{1}{4\gamma} \rho \tau E_\sfX\nmzm.
\end{align*}
Recalling that $\theta := \rho\tau\in (0,1]$ and re-arranging the terms gives
\[
\big( 1 -\tfrac{1}{4\gamma}\theta \big) E_\sfX\nmz \le 2\gamma \mu_0^{-1} \frac{\tau}{\rho}\|\sfG^n\|_\Omega^2 + \big( 1 +\tfrac{1}{4\gamma} \theta\big) E_\sfX\nmzm.
\]
Setting $c_1(\gamma,\theta):=\frac{2\gamma}{1 -\tfrac{1}{4\gamma}\theta}$ and
$c_2(\gamma,\theta):=\tfrac{1+\frac{1}{4\gamma}\theta}{1-\frac{1}{4\gamma}\theta}e^{-\theta}$, this implies that 
\[
E_\sfX\nmz e^{-\rho t^n}\le c_1(\gamma,\theta) \mu_0^{-1} \frac{\tau}{\rho}\|\sfG^n\|_\Omega^2 e^{-\rho t^n} +  c_2(\gamma,\theta) E_\sfX\nmzm e^{-\rho t^{n-1}}.
\]
The ideal choice of $\gamma$ would be to minimize $c_1(\gamma,1)$ while ensuring that $c_2(\gamma,\theta)\le1$ for all $\theta\in (0,1]$. A fairly optimal choice is $\gamma=\frac{13}{24}$ (see Remark~\ref{rem:CX} below for some further discussion). An induction argument gives
\[
E_\sfX\nmz e^{-\rho t^n} \le c_1(\tfrac{13}{24},\theta) \mu_0^{-1} \frac{\tau}{\rho} \sum_{m\in \{0:n\}} \|\sfG^m\|_\Omega^2e^{-\rho t^m},
\]
since $E_\sfX^{-\frac12}=0$.
Summing over all $n\in\polN$ and exchanging the summations on the right-hand side, we infer that
\begin{align*}
\sum_{n\in\polN} \tau E_\sfX\nmz e^{-2\rho t^n} &\le c_1(\tfrac{13}{24},\theta)
\mu_0^{-1} \frac{\tau^2}{\rho} 
\sum_{n\in\polN} e^{-\rho t^n} \sum_{m\in \{0:n\}} \|\sfG^m\|_\Omega^2 e^{-\rho t^m}\\
&= c_1(\tfrac{13}{24},\theta)\mu_0^{-1} \frac{\tau^2}{\rho} \sum_{m\ge0} \bigg(\sum_{n\ge m} e^{-\rho t^n} \bigg)
\|\sfG^m\|_\Omega^2 e^{-\rho t^m} \\
&\le c_1(\tfrac{13}{24},\theta) \theta (1-e^{-\theta})^{-1} \mu_0^{-1} \frac{1}{\rho^2} \sum_{m\ge0} \tau \|\sfG^m\|_\Omega^2 e^{-2\rho t^m},
\end{align*}
since $\sum_{n\ge m} e^{-\rho t^n} = e^{-\rho t^m}(1-e^{-\theta})^{-1}$ and $\theta=\rho\tau$.
\end{proof}

\begin{remark}[Constant $C_\sfX(\theta)$] \label{rem:CX}
We notice that $C_\sfX(\theta)\le \frac{169}{84}\frac{e}{e-1} \approx 3.1828$.
Moreover, we observe that $C_\sfX(\theta)\to C_\sfX(0^+):=
\frac{13}{12}$ as $\theta\to 0^+$. This limit is
relevant as $\theta=\rho\tau$ and $\tau\ll T\le \rho^{-1}$. The exact counterpart of
the a priori estimate~\eqref{eq:key_stab1} would be $C_\sfX(0^+)=1$. This would require taking $\gamma=\frac12$, but then $c_2(\gamma,\theta)$ can take values larger than one; some further optimization should be possible by considering two distinct coefficients in the Young inequalities related to $\dsfX\nmz$ and to $\dsfX\nmzm$, but this is not further explored here.
\end{remark}

\subsection{Application: \rev{stability} estimates on the acceleration}

Let $(\sfU^n)_{n\in\polN}$ solve the leapfrog scheme~\eqref{eq:leapfrog}
with $\sfU^1=\sfU^0=0$. Recall the definition~\eqref{eq:def_sfA}
of the acceleration $\sfA^n$ for all $n\in\polN$, and set
\begin{subequations} \begin{equation} \label{eq:def_sfAnmz}
\sfA\nmz := \frac12(\sfA^{n+1}+\sfA^n), \quad
\dsfA\nmz := \frac{1}{\tau}(\sfA^{n+1}-\sfA^n) \quad \forall n\ge -1.
\end{equation} 
Similarly, let us set $\sfB^n := \frac{1}{\tau^3}(\sfUnp-3\sfUn+3\sfUnm-\sfUnmm)$ 
for all $n \ge \polN$, and
\begin{equation} \label{eq:def_sfBnmz}
\sfB\nmz := \frac12(\sfB^{n+1}+\sfB^n), \quad
\dsfB\nmz := \frac{1}{\tau}(\sfB^{n+1}-\sfB^n) \quad \forall n\ge -1.
\end{equation} \end{subequations}
Notice that $\sfB^{n+1}=\dsfA\nmz$.

\begin{corollary}[Estimates on acceleration] \label{cor:estim_sfA}
Assume the CFL condition~\eqref{eq:CFL}. 
Let $(\sfU^n)_{n\in\polN}$ solve~\eqref{eq:leapfrog} with $\sfU^1=\sfU^0=0$. Let 
$\sfA\nmz$, $\dsfA\nmz$ be defined in~\eqref{eq:def_sfAnmz}, and let 
$\sfB\nmz$, $\dsfB\nmz$ be defined in~\eqref{eq:def_sfBnmz}. 
Recall that $\theta:=\rho \tau \in (0,1]$. The following holds:
\begin{subequations} \begin{align}
\sum_{n\in\polN} \tau \big( \mu_0 \|\dsfA\nmz\|_\Omega^2 + \|\nabla \sfA\nmz\|_\Omega^2 \big) e^{-2\rho t^n} 
& \le C_\sfA(\theta) \mu_0^{-1} \frac{1}{\rho^2} \int_0^{+\infty} \|\ddot f(t)\|_\Omega^2 e^{-2\rho t} dt, \label{eq:bnd_sfA} \\
\sum_{n\in\polN} \tau \big( \mu_0 \|\dsfB\nmz\|_\Omega^2 + \|\nabla \sfB\nmz\|_\Omega^2 \big) e^{-2\rho t^n} 
& \le C_\sfB(\theta) \mu_0^{-1} \frac{1}{\rho^2} \int_0^{+\infty} \|\dddot f(t)\|_\Omega^2 e^{-2\rho t} dt, \label{eq:bnd_sfB}
\end{align} \end{subequations}
with $C_\sfA(\theta):=\frac23 C_\sfX(\theta)(1+e^{2\theta})$,
$C_\sfB(\theta):=\frac{11}{20} C_\sfX(\theta)(2+e^{2\theta})$,
and $C_\sfX(\theta)$ defined in Lemma~\ref{lem:damped_leapfrog}.
\end{corollary}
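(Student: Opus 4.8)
The plan is to recognize that the discrete sequences $\sfA$ and $\sfB$ are themselves solutions of leapfrog schemes of the form~\eqref{eq:leapfrog_X}, so that Lemma~\ref{lem:damped_leapfrog} applies. Since $f$ is supported away from zero (hence $\sfF^n=0$ for all $n\le0$) and $\sfU^1=\sfU^0=0$, the leapfrog scheme~\eqref{eq:leapfrog}, written as $(\sfA^n,v_h)_\Omega+(\nabla\sfU^n,\nabla v_h)_\Omega=(\sfF^n,v_h)_\Omega$, holds trivially for all $n\le0$ as well (both sides vanish), hence for all indices under the zero extension. Taking the linear combination of this identity at indices $n+1$, $n$, $n-1$ with weights $\tau^{-2}$, $-2\tau^{-2}$, $\tau^{-2}$, and using $D^2_\tau\sfU^n=\sfA^n$, I obtain $(D^2_\tau\sfA^n,v_h)_\Omega+(\nabla\sfA^n,\nabla v_h)_\Omega=(D^2_\tau\sfF^n,v_h)_\Omega$ for all $n\ge0$ and all $v_h\in V_h$, with $\sfA^0=\sfA^{-1}=0$; that is, $\sfA$ solves~\eqref{eq:leapfrog_X} with source $\sfG^n:=D^2_\tau\sfF^n=\tau^{-2}(\sfF^{n+1}-2\sfF^n+\sfF^{n-1})$. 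Taking in addition the backward difference of the $\sfA$-equation (legitimate since it also holds trivially at $n=-1$) and using $\tau^{-1}(\sfA^n-\sfA^{n-1})=\sfB^n$, I find that $\sfB$ solves~\eqref{eq:leapfrog_X} with source $\sfG^n:=\tau^{-3}(\sfF^{n+1}-3\sfF^n+3\sfF^{n-1}-\sfF^{n-2})$ and $\sfB^0=\sfB^{-1}=0$.

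Lemma~\ref{lem:damped_leapfrog} applied with $\sfX=\sfA$, together with the quadratic-form lower bound~\eqref{eq:tildeE} for $E_\sfA\nmz$, then yields $\sum_{n\in\polN}\tau(\mu_0\|\dsfA\nmz\|_\Omega^2+\|\nabla\sfA\nmz\|_\Omega^2)e^{-2\rho t^n}\le C_\sfX(\theta)\mu_0^{-1}\rho^{-2}\sum_{n\in\polN}\tau\|D^2_\tau\sfF^n\|_\Omega^2e^{-2\rho t^n}$, and similarly for $\sfB$ with $D^2_\tau\sfF^n$ replaced by $\tau^{-3}(\sfF^{n+1}-3\sfF^n+3\sfF^{n-1}-\sfF^{n-2})$. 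It therefore remains to bound these right-hand sums by the damped integrals of $\|\ddot f\|_\Omega^2$ and $\|\dddot f\|_\Omega^2$. For this I use the Peano-kernel representations $\sfF^{n+1}-2\sfF^n+\sfF^{n-1}=\int_{t^{n-1}}^{t^{n+1}}K_2(s)\ddot f(s)\,ds$ and $\sfF^{n+1}-3\sfF^n+3\sfF^{n-1}-\sfF^{n-2}=\tfrac12\int_{t^{n-2}}^{t^{n+1}}K_3(s)\dddot f(s)\,ds$, where $K_2$ equals $\tau$ times the time hat function supported in $[t^{n-1},t^{n+1}]$ and $K_3$ equals $\tau^2$ times the quadratic $B$-spline supported in $[t^{n-2},t^{n+1}]$; both kernels are nonnegative, and a direct computation gives $\int K_2^2=\tfrac23\tau^3$ and $\int K_3^2=\tfrac{11}{5}\tau^5$ (the latter being the $L^2$-norm of the quadratic $B$-spline). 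The Cauchy--Schwarz inequality then yields $\|D^2_\tau\sfF^n\|_\Omega^2\le\tfrac{2}{3\tau}\int_{t^{n-1}}^{t^{n+1}}\|\ddot f(s)\|_\Omega^2\,ds$ and $\|\tau^{-3}(\sfF^{n+1}-3\sfF^n+3\sfF^{n-1}-\sfF^{n-2})\|_\Omega^2\le\tfrac{11}{20\tau}\int_{t^{n-2}}^{t^{n+1}}\|\dddot f(s)\|_\Omega^2\,ds$.

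Finally I multiply by $\tau e^{-2\rho t^n}$, sum over $n$, and exchange the order of summation. Each elementary subinterval $[t^m,t^{m+1}]$ appears in exactly two of the windows $[t^{n-1},t^{n+1}]$, once as its right half (on which $e^{-2\rho t^m}\le e^{2\rho\tau}e^{-2\rho t}$) and once as its left half (on which $e^{-2\rho t^{m+1}}\le e^{-2\rho t}$), so that with $\theta=\rho\tau\le1$ from~\eqref{eq:ass_rho_dt} the total weight accrued per subinterval is $1+e^{2\theta}$; this gives $\sum_{n\in\polN}\tau\|D^2_\tau\sfF^n\|_\Omega^2e^{-2\rho t^n}\le\tfrac23(1+e^{2\theta})\int_0^{+\infty}\|\ddot f(t)\|_\Omega^2e^{-2\rho t}\,dt$, proving~\eqref{eq:bnd_sfA} with $C_\sfA(\theta)=\tfrac23 C_\sfX(\theta)(1+e^{2\theta})$. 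The same bookkeeping for the width-$3\tau$ windows $[t^{n-2},t^{n+1}]$, where each subinterval appears three times (as left, middle, and right thirds, contributing the factors $e^{-2\theta}$, $1$, $e^{2\theta}$, with sum $\le2+e^{2\theta}$), yields~\eqref{eq:bnd_sfB} with $C_\sfB(\theta)=\tfrac{11}{20}C_\sfX(\theta)(2+e^{2\theta})$. The verifications of the leapfrog identities at nonpositive indices and of the kernel integrals are routine; the delicate point is to keep the constants sharp, i.e., to extract the factors $\tfrac23$ and $\tfrac{11}{20}$ from the exact $L^2$-norms of the Peano kernels and to recover $1+e^{2\theta}$ and $2+e^{2\theta}$ by matching each weight $e^{-2\rho t^n}$ to the correct sub-window rather than bounding it uniformly by $e^{2\rho\tau}e^{-2\rho t}$.
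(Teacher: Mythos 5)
Your proposal is correct and follows essentially the same route as the paper's proof: recognize that $\sfA$ and $\sfB$ solve the generic scheme~\eqref{eq:leapfrog_X} with sources given by the second and third finite differences of $f$, apply Lemma~\ref{lem:damped_leapfrog} together with~\eqref{eq:tildeE}, bound the sources via Peano-kernel (hat function and quadratic B-spline) representations yielding the constants $\tfrac23$ and $\tfrac{11}{20}$, and convert the weighted sums to damped integrals with the window-overlap factors $1+e^{2\theta}$ and $2+e^{2\theta}$. The only cosmetic difference is that you quote the B-spline kernel for the third difference directly, whereas the paper derives it explicitly via a skew-symmetry/Fubini argument; the resulting constants coincide.
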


\begin{proof}
(1) Proof of~\eqref{eq:bnd_sfA}. We observe that the sequence $(\sfA^n)_{n\in\polN}$ solves
the generic leapfrog scheme~\eqref{eq:leapfrog_X} with $\sfA^0=\sfA^{-1}=0$ and right-hand side
\[
\sfG^n := \frac{\sfF^{n+1}-2\sfF^n+\sfF^{n-1}}{\tau^2}
= \frac{1}{\tau} \int_{t^{n-1}}^{t^{n+1}} \psi^n(t) \ddot f(t)dt,
\]
where $\psi^n(t)$ denotes the hat basis function in time having support in $[t^{n-1},t^{n+1}]$
and satisfying $\psi^n(t^n)=1$. Since $\int_{t^{n-1}}^{t^{n+1}} \psi_n(t)^2dt=\frac23\tau$, a Cauchy--Schwarz inequality shows that
\[
\|\sfG^n\|_\Omega^2 \le \frac23 \frac{1}{\tau}  \int_{t^{n-1}}^{t^{n+1}} \|\ddot f(t)\|_\Omega^2 dt.
\]
Letting $E\nmz_\sfA:=m_{h\tau}(\dsfA\nmz,\dsfA\nmz)+\|\nabla \sfA\nmz\|_\Omega^2$ and since $m_{h\tau}(\dsfA\nmz,\dsfA\nmz) \ge \mu_0 \|\dsfA\nmz\|_\Omega^2$ owing to~\eqref{eq:mht_L2}, Lemma~\ref{lem:damped_leapfrog} gives
\begin{align*}
\sum_{n\in\polN} \tau \big( \mu_0 \|\dsfA\nmz\|_\Omega^2 + \|\nabla \sfA\nmz\|_\Omega^2 \big) e^{-2\rho t^n}
&\le \sum_{n\in\polN} \tau E_\sfA\nmz \rev{e^{-2\rho t^n}} \\
&\le C_\sfX(\theta) \mu_0^{-1} \frac{1}{\rho^2} \sum_{n\in\polN} \tau \|\sfG^n\|_\Omega^2 \tau e^{-2\rho t^n} \\
&\le \frac23 C_\sfX(\theta) \mu_0^{-1} \frac{1}{\rho^2} \sum_{n\in\polN} e^{-2\rho t^n} \int_{t^{n-1}}^{t^{n+1}} \|\ddot f(t)\|_\Omega^2 dt.
\end{align*}
Since $\theta=\rho\tau\le 1$, we observe that
\[
e^{-2\rho t^n} \int_{t^{n-1}}^{t^{n+1}} \|\ddot f(t)\|_\Omega^2 dt \le 
\int_{J_{n-1}} \|\ddot f(t)\|_\Omega^2 e^{-2\rho t} dt 
+ e^{2\theta} \int_{J_{n}} \|\ddot f(t)\|_\Omega^2 e^{-2\rho t} dt.
\] 
This completes the proof of~\eqref{eq:bnd_sfA}.

(2) The proof of~\eqref{eq:bnd_sfB} is similar and is only sketched.
We observe that the sequence $(\sfB^n)_{n\in\polN}$ solves
the generic leapfrog scheme~\eqref{eq:leapfrog_X} with $\sfB^0=\sfB^{-1}=0$ and right-hand side
\[
\sfG^n := \frac{\sfF^{n+1}-3\sfF^n+3\sfF^{n-1}-\sfF^{n-2}}{\tau^3}
= \frac{1}{\tau^2} \int_{t^{n-2}}^{t^{n+1}} \phi^n(t) \ddot f(t)dt,
\]
with $\phi_n(t):=\psi_n(t)-\psi_{n-1}(t)$. Since $\phi_n$ is skew-symmetric around
$t\nmzm:=\frac12(t^{n-1}+t^n)$, we infer using Fubini's theorem that
\begin{align*}
\int_{t^{n-2}}^{t^{n+1}} \phi^n(t) \ddot f(t)dt & = \int_{t^{n-\frac12}}^{t^{n+1}} \phi^n(t) \big(\ddot f(t)-\ddot f(\tilde t)\big) dt \\
&= \int_{t^{n-\frac12}}^{t^{n+1}} \phi^n(t) \bigg( \int_{\tilde t}^t \dddot f(s) ds \bigg) dt \\
&= \int_{t^{n-2}}^{t^{n+1}} \dddot f(s) \xi(s) ds, \qquad \xi(s):=\int_{\tilde s}^{t^{n+1}} \!\!\!\!\phi_n(t) dt,
\end{align*}
with $\tilde t:= 2t^{n-\frac12}-t$ for all $t\in [t^{n-\frac12},t^{n+1}]$ and
$\tilde s:=t^{n-\frac12}+|s-t^{n-\frac12}|$ for all $s\in [t^{n-2},t^{n+1}]$. 
A direct calculation shows that
\[
\xi(s) = \begin{cases}
\int_s^{t^{n+1}} \frac{t^{n+1}-t}{\tau}dt = \frac{\tau}{2} \big( \frac{t^{n+1}-s}{\tau} \big)^2&
s\in [t^n,t^{n+1}],\\
\frac{\tau}{2} + \int_s^{t^n} 2\frac{t-t\nmzm}{\tau}dt=\frac{3\tau}{4}-\tau\big( \frac{s-t\nmzm}{\tau} \big)^2&
s\in [t\nmzm,t^n].
\end{cases}
\]
This implies that
\[
\int_{t^{n-2}}^{t^{n+1}} \xi(s)^2ds = 2\int_{t^{n-\frac12}}^{t^{n+1}} \xi(s)^2ds
=2\bigg( \int_{t^{n-\frac12}}^{t^n} \xi(s)^2ds + \int_{t^n}^{t^{n+1}} \xi(s)^2ds
\bigg) = \frac{11}{20}\tau^3,
\]
since $\int_{t^{n-\frac12}}^{t^n} \xi(s)^2ds=\tau^3\int_0^{\frac12}(\frac34-u^2)^2du = \frac{9\tau^3}{40}$ and $\int_{t^n}^{t^{n+1}} \xi(s)^2ds =\frac{\tau^3}{4}\int_0^1u^4du=\frac{\tau^3}{20}$.
Invoking the Cauchy--Schwarz inequality, we infer that
\[
\|\sfG^n\|^2_\Omega \le \frac{1}{\tau} \frac{11}{20} \int_{t^{n-2}}^{t^{n+1}} \|\dddot f(t)\|_\Omega^2 dt.
\]
The conclusion is now straightforward.
\end{proof}

\subsection{\rev{A priori error estimates}}
\label{app:a_priori}

For $m \in \mathbb N$, we say that a function $\phi \in L^2(\Omega)$ belongs to $H^m(\calT_h)$
if $\phi|_K \in H^m(K)$ for all $K \in \calT_h$, and we introduce the semi-norm
$|\phi|_{H^m(\calT_h)}^2 := \sum_{K \in \calT_h} |\phi|_K|_{H^m(K)}^2.$
For dimensional consistency, we set $\kappa:=\max(\ell_\Omega,\rho^{-1})$.
For a semi-norm $|{\cdot}|_Y$ equipping the space $Y$ composed of functions defined over $\Omega$,
we use the shorthand notation $|v|_{L^2_\rho(Y)}^2:=\int_0^{+\infty} |v(t)|_Y^2 e^{-2\rho t}dt$.
Let $\pi_h\upE:V\rightarrow V_h$ denote the Riesz elliptic projector such that, for all $v\in V$, the discrete function $\pi_h\upE(v)\in V_h$ is uniquely defined by requiring that  
$(\nabla (\pi_h\upE(v)-v),\nabla w_h)_\Omega=0$ for all $w_h\in V_h$. Recall that
$\|\pi_h\upE(v)-v\|_\Omega \lesssim \kappa h^k |v|_{H^{k+1}(\calT_h)}$ and 
$\|\nabla(\pi_h\upE(v)-v)\|_\Omega \lesssim h^k |v|_{H^{k+1}(\calT_h)}$ 
(a sharper $L^2$-estimate is not needed). Moreover,
we have the $H^1$-stability properties 
$\|\pi_h\upE(v)\|_\Omega \lesssim \ell_\Omega |v|_V$ and $|\pi_h\upE(v)|_V \le |v|_V$.

\begin{theorem}[Damped energy-norm a priori error estimate on $(u-\uht)$]
\label{th:damped_apriori}
Assume the CFL condition~\eqref{eq:CFL}.  Assume that 
$C_{\textup{tim}}(u):= \sum_{r\in\{3{:}4\}}\kappa^{r-2}|u^{(r)}|_{L^2_\rho(V)}$ and
$C_{\textup{spa}}(u):= \sum_{r\in\{0{:}3\}}\kappa^r |u^{(r)}|_{L^2_\rho(H^{k+1}(\calT_h))}$
are bounded. The following holds:
\begin{equation}
\calE_\rho^2(u-\uht) \lesssim \tau^4 C_{\textup{tim}}(u)^2 + h^{2k} C_{\textup{spa}}(u)^2. 
\end{equation}
\end{theorem}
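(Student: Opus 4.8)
The plan is to compare $\uht=R(\sfU)$ with $u$ by inserting the nodal values $\sfV^n:=u(t^n)$ of the exact solution and their Riesz elliptic projections $\pi_h\upE(\sfV^n)\in V_h$, and to exploit the linearity of the reconstruction operator $R$ to write
\[
u-\uht=\bigl(u-R(\sfV)\bigr)+R(p)+R(d),\qquad
p^n:=(I-\pi_h\upE)(\sfV^n),\quad d^n:=\pi_h\upE(\sfV^n)-\sfU^n.
\]
I would bound $\calE_\rho$ of the three contributions separately and combine them by the triangle inequality, using throughout $\rho\tau\le1$ and $\tau\le1/\rho\le\kappa$. One could instead start from $u-\uht=(u-\wht)-\deltaht$ and invoke Theorem~\ref{thm:upper} and Lemma~\ref{lem:bnd_eta}, but the latter is phrased in terms of $\ddot f,\dddot f$, which, through the PDE, bring in the strong norm $\|\Delta\ddot u\|_\Omega$ not present in $C_{\textup{tim}}$ or $C_{\textup{spa}}$; working directly with $u$ avoids this.

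For the first term, $u$ is supported away from zero (since $f$ is and the initial data vanish), so Lemma~\ref{lem:etaf} applies --- with $Y=L^2(\Omega)$ for the time derivative and $Y=V$ for the gradient --- and, together with the Poincar\'e bound $\|\dddot u(t)\|_\Omega\lesssim\kappa|\dddot u(t)|_V$, gives $\calE_\rho^2(u-R(\sfV))\lesssim\tau^4\kappa^2|u^{(3)}|_{L^2_\rho(V)}^2\lesssim\tau^4C_{\textup{tim}}(u)^2$. For the projection term, Lemma~\ref{lem:stab} (with $Y=L^2(\Omega)$ for $\tfrac{d}{dt}R(p)$ and $Y=V$ for $\nabla R(p)$) bounds $\calE_\rho^2(R(p))$ by $\sum_n\tau\bigl(\|\tfrac{1}{\tau}(p^{n+1}-p^n)\|_\Omega^2+\|\nabla p^n\|_\Omega^2\bigr)e^{-2\rho t^n}$; since $p^n=(I-\pi_h\upE)(u(t^n))$ and $\tfrac{1}{\tau}(p^{n+1}-p^n)=(I-\pi_h\upE)\bigl(\tfrac{1}{\tau}\int_{J_n}\dot u\bigr)$, the approximation properties of $\pi_h\upE$ recalled before the theorem control these nodal quantities by $h^k$ times $H^{k+1}(\calT_h)$-seminorms of $u$ and $\dot u$, and converting nodal values into time integrals via the embedding $H^1(J_n;H^{k+1}(\calT_h))\hookrightarrow C^0(\overline J_n;H^{k+1}(\calT_h))$ (with an $O(\tau)$ loss on the $\dot u$ piece) yields $\calE_\rho^2(R(p))\lesssim h^{2k}\bigl(|u|_{L^2_\rho(H^{k+1}(\calT_h))}^2+\kappa^2|\dot u|_{L^2_\rho(H^{k+1}(\calT_h))}^2\bigr)\lesssim h^{2k}C_{\textup{spa}}(u)^2$.

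The core of the argument is the discrete-error term $R(d)$. The sequence $d^n=\pi_h\upE(u(t^n))-\sfU^n$ lies in $V_h$ and satisfies $d^0=d^{-1}=0$; testing the PDE at $t^n$ against $v_h\in V_h$, using $(\nabla\pi_h\upE(u(t^n)),\nabla v_h)_\Omega=(\nabla u(t^n),\nabla v_h)_\Omega$ and subtracting~\eqref{eq:leapfrog}, one finds that $d$ solves the generic leapfrog scheme~\eqref{eq:leapfrog_X} with data $\sfG^n:=\pi_h\upE(D^2_\tau u(t^n))-\ddot u(t^n)$ (the relation holding down to $n=0$ because $f(0)=u(0)=0$, whence $\ddot u(0)=0$). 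Lemma~\ref{lem:damped_leapfrog}, applied with $\sfX:=d$, then controls the damped sum of the discrete energies $E_d^{n+\frac12}$ by $\tfrac{C}{\rho^2}\sum_n\tau\|\sfG^n\|_\Omega^2e^{-2\rho t^n}$; via~\eqref{eq:tildeE} this bounds $\sum_n\tau\|\tfrac{1}{\tau}(d^{n+1}-d^n)\|_\Omega^2e^{-2\rho t^n}$, and, invoking the CFL inequality $\|\nabla d^n\|_\Omega\le\|\nabla d^{n-\frac12}\|_\Omega+\tfrac12(1-\mu_0)^{\frac12}\|\tfrac{1}{\tau}(d^n-d^{n-1})\|_\Omega$ exactly as at the end of the proof of Lemma~\ref{lem:bnd_eta}, it also bounds $\sum_n\tau\|\nabla d^n\|_\Omega^2e^{-2\rho t^n}$; by Lemma~\ref{lem:stab} these two sums bound $\calE_\rho^2(R(d))$. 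It remains to split
\[
\sfG^n=(\pi_h\upE-I)\ddot u(t^n)+(\pi_h\upE-I)\bigl(D^2_\tau u(t^n)-\ddot u(t^n)\bigr)+\bigl(D^2_\tau u(t^n)-\ddot u(t^n)\bigr)
\]
and to estimate each piece: the first by $\kappa h^k|\ddot u(t^n)|_{H^{k+1}(\calT_h)}$ (which, after summation and the bounds $\tau,1/\rho\le\kappa$, yields the $r\in\{2,3\}$ contributions to $h^{2k}C_{\textup{spa}}(u)^2$); the third by the pure temporal truncation error, $\|\cdot\|_\Omega^2\lesssim\tau^3\int_{t^{n-1}}^{t^{n+1}}\|u^{(4)}\|_\Omega^2$ using the cancellation of the odd Taylor terms (yielding the $\tau^4\kappa^4|u^{(4)}|_{L^2_\rho(V)}^2\le\tau^4C_{\textup{tim}}(u)^2$ contribution); and the middle, mixed, term \emph{crudely}, by $\kappa h^k$ times the $H^{k+1}(\calT_h)$-norm of the \emph{third}-order truncation error, i.e.\ $\|\cdot\|_\Omega^2\lesssim\kappa^2h^{2k}\tau\int_{t^{n-1}}^{t^{n+1}}|u^{(3)}|_{H^{k+1}(\calT_h)}^2$ (which, after summation and the factor $1/\rho^2$, contributes $\kappa^6h^{2k}|u^{(3)}|_{L^2_\rho(H^{k+1}(\calT_h))}^2\le h^{2k}C_{\textup{spa}}(u)^2$). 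Collecting the three terms of the initial splitting gives the claim.

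The main obstacle is the bookkeeping in this last step: the three-way decomposition of $\sfG^n$ must be arranged so that each piece lands in the intended place, and in particular the mixed spatial/temporal term has to be estimated through the \emph{third}-order time-truncation error rather than through the sharper, cancellation-improved, fourth-order one --- only then does the factor $1/\rho^2$ supplied by Lemma~\ref{lem:damped_leapfrog} combine with the surplus $\tau^2$ to produce the power $\kappa^6$ matching the $r=3$ term of $C_{\textup{spa}}(u)$, whereas estimating it via $u^{(4)}$ would produce an uncontrolled $\kappa^8|u^{(4)}|^2$. A minor but necessary point is to check that $d^n$ solves~\eqref{eq:leapfrog_X} all the way down to $n=0$ with vanishing initial data, which rests on $\ddot u(0)=0$.
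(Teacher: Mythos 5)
Your proposal is correct and follows essentially the same route as the paper's proof: the same splitting into a time-reconstruction/interpolation error, an elliptic-projection error, and a discrete error $d^n=\pi_h\upE(u(t^n))-\sfU^n$ that solves the generic leapfrog scheme~\eqref{eq:leapfrog_X} with the consistency source $\sfG^n=\pi_h\upE((D^2_\tau u)^n)-\ddot u(t^n)$, handled via Lemmas~\ref{lem:stab}, \ref{lem:etaf} and~\ref{lem:damped_leapfrog}. The only (harmless) deviations are a regrouping of the first two terms and your three-way splitting of $\sfG^n$, where the paper keeps $\pi_h\upE((D^2_\tau u)^n-\ddot u(t^n))$ together and bounds it directly by the $H^1$-stability of $\pi_h\upE$, avoiding the mixed term you estimate through $|u^{(3)}|_{H^{k+1}(\calT_h)}$; both variants land inside the assumed constants $C_{\textup{tim}}(u)$ and $C_{\textup{spa}}(u)$.
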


\begin{proof}
We consider the 
sequence $\sfuh:=(\sfuh^n)_{n\in\polN}$ such that 
$\sfuh^n:=\pi_h\upE(u(t^n))$ for all $n\in\polN$. We write
\[
u-\uht = (u-\pi_h\upE(u)) + (\pi_h\upE(u)-R(\sfuh)) + R(\sfuh-\sfU),
\]
and bound the damped energy norm of the three terms on the right-hand side. 

(1) The approximation properties of the elliptic projector and $\ell_\Omega\le\kappa$ give
\[
\calE_\rho(u-\pi_h\upE(u)) \lesssim h^k \big( \kappa |\dot u|_{L^2_\rho(H^{k+1}(\calT_h))}
+ |u|_{L^2_\rho(H^{k+1}(\calT_h))}\big).
\]

(2) Invoking Lemma~\ref{lem:etaf} and the $H^1$-stability of the elliptic projection gives
\[
\calE_\rho(\pi_h\upE(u)-R(\sfuh)) \lesssim \tau^2 \big( \ell_\Omega |u^{(3)}|_{L^2_\rho(V)} +
\tau |u^{(3)}|_{L^2_\rho(V)} \big) \le \tau^2 \kappa |u^{(3)}|_{L^2_\rho(V)},
\]
since $\tau \le \rho^{-1} \le \kappa$.

(3) We observe that $\sfX:=\sfuh-\sfU$ solves the leapfrog scheme with $\sfX^0=\sfX^{-1}=0$ and right-hand side 
\[
\sfG^n := \sfG_1^n + \sfG_2^n :=
\pi_h\upE\big( (\rev{D^2_\tau}u)^n - \ddot u(t^n) \big) + \big( \pi_h\upE(\ddot u(t^n))-\ddot u(t^n) \big) \quad \forall n\in\polN,
\]
with $(\rev{D^2_\tau}u)^n := \frac{1}{\tau^2}\big(u(t^{n+1})-2u(t^n)+u(t^{n-1})\big)$. 
Combining the results of Lemma~\ref{lem:stab} and Lemma~\ref{lem:damped_leapfrog} gives
\[
\calE_\rho^2\big(R(\sfuh-\sfU) \big) \lesssim \sum_{n\in\polN} \tau E_\sfX\nmz e^{-2\rho t^n}
\lesssim \frac{1}{\rho^2} \sum_{n\in\polN} \tau \big(\|\sfG_1^n\|_\Omega^2
+\|\sfG_2^n\|_\Omega^2\big) e^{-2\rho t^n}.
\]
Proceeding as in the proof of Lemma~\ref{lem:etaf} and using the $H^1$-stability of 
the elliptic projection, we infer that 
$\|\sfG_1^n\|_\Omega^2 \lesssim \tau^3 \ell_\Omega^2 \int_{t^{n-1}}^{t^{n+1}} |u^{(4)}(t)|_V^2dt$. 
Moreover, using the standard estimate $\tau|\psi(t^n)|_Y^2\lesssim \int_{J_n}|\psi(t)|_Y^2dt +
\tau^2\int_{J_n} |\dot\psi(t)|_Y^2dt$, we infer that
\[
\tau \|\sfG_2^n\|_\Omega^2 \lesssim h^{2k}\ell_\Omega^2\int_{J_n}|u^{(2)}(t)|_{H^{k+1}(\calT_h)}^2dt
+ h^{2k}\ell_\Omega^2 \tau^2 \int_{J_n}|u^{(3)}(t)|_{H^{k+1}(\calT_h)}^2dt.
\]
Putting the above two bounds together and taking the square root gives
\[
\calE_\rho\big(R(\sfuh-\sfU) \big) \lesssim \kappa^2\tau^2 |u^{(4)}|_{L^2_\rho(V)} + \kappa^2 h^k\big(|u^{(2)}|_{L^2_\rho(H^{k+1}(\calT_h))}+
\kappa|u^{(3)}|_{L^2_\rho(H^{k+1}(\calT_h))} \big).
\]
This completes the proof.
\end{proof}

\begin{corollary}[Damped energy-norm a priori error estimate on $(u-\wht)$]
Under the assumptions of Theorem~\ref{th:damped_apriori}, and provided 
$C_{\textup{tim}}(f):= \sum_{r\in\{2{:}3\}}\kappa^{r-2}\|f^{(r)}\|_{L^2_\rho(L^2)}$
is bounded, the following holds:
\begin{equation}
\calE_\rho^2(u-\wht) \lesssim \tau^4 \big( C_{\textup{tim}}(u)^2 + \kappa^2 C_{\textup{tim}}(f)^2 \big) + h^{2k} C_{\textup{spa}}(u)^2. 
\end{equation}
\end{corollary}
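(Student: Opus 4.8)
The plan is to bootstrap from the estimate on $u-\uht$ already proved in Theorem~\ref{th:damped_apriori}, paying for the passage to the $C^2$-reconstruction $\wht$ with the a priori bound on the time-reconstruction error $\deltaht=\uht-\wht$ from Lemma~\ref{lem:bnd_eta}. First I would split $u-\wht = (u-\uht) + \deltaht$ and use the triangle inequality in the damped energy norm, so that $\calE_\rho^2(u-\wht) \le 2\calE_\rho^2(u-\uht) + 2\calE_\rho^2(\deltaht)$. The first term is bounded by $\tau^4 C_{\textup{tim}}(u)^2 + h^{2k}C_{\textup{spa}}(u)^2$ directly from Theorem~\ref{th:damped_apriori}, whose hypotheses are exactly those assumed here.

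For the second term, recall from~\eqref{eq:def_calE_rho} that $\calE_\rho^2(\deltaht) = \int_0^{+\infty}\big\{\|\ddeltaht(t)\|_\Omega^2+\|\nabla\deltaht(t)\|_\Omega^2\big\}\,e^{-2\rho t}\,dt$, since $\deltaht\in C^0(J;V_h)$ has first weak time-derivative $\ddeltaht$. Adding the bounds~\eqref{eq:bnd_dot_eta} and~\eqref{eq:bnd_grad_eta} of Lemma~\ref{lem:bnd_eta} (applicable because the CFL condition~\eqref{eq:CFL} is in force and $\ddot f,\dddot f\in L^2_\rho(J;L^2(\Omega))$ as $C_{\textup{tim}}(f)$ is finite) yields $\calE_\rho^2(\deltaht) \lesssim \frac{\tau^4}{\rho^2}\int_0^{+\infty}\big\{\|\ddot f(t)\|_\Omega^2+\tau^2\|\dddot f(t)\|_\Omega^2\big\}e^{-2\rho t}\,dt$. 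I would then convert the $\rho^{-1}$ and $\tau$ factors into powers of $\kappa=\max(\ell_\Omega,\rho^{-1})$ using $\tau\le\rho^{-1}\le\kappa$ (a consequence of~\eqref{eq:ass_rho_dt} together with the definition of $\kappa$): this gives $\frac{\tau^4}{\rho^2}\le\tau^4\kappa^2$ and $\frac{\tau^6}{\rho^2}\le\tau^4\kappa^4$, hence $\calE_\rho^2(\deltaht) \lesssim \tau^4\kappa^2\big(\|\ddot f\|_{L^2_\rho(L^2)}^2+\kappa^2\|\dddot f\|_{L^2_\rho(L^2)}^2\big) \lesssim \tau^4\kappa^2 C_{\textup{tim}}(f)^2$, since $C_{\textup{tim}}(f)=\|\ddot f\|_{L^2_\rho(L^2)}+\kappa\|\dddot f\|_{L^2_\rho(L^2)}$ dominates the square root of the preceding bracket. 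Collecting the two contributions gives the asserted estimate.

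The argument is essentially bookkeeping, so I do not expect a genuine obstacle once Theorem~\ref{th:damped_apriori} and Lemma~\ref{lem:bnd_eta} are available. The only mild points requiring care are to reconcile the mixed $\tau/\rho$ scaling produced by Lemma~\ref{lem:bnd_eta} with the $\kappa$-weighted definition of $C_{\textup{tim}}(f)$, and to note that the time-regularity of $f$ implicitly needed to invoke Lemma~\ref{lem:bnd_eta} is already guaranteed by the hypothesis that $C_{\textup{tim}}(f)$ is bounded (which ensures $\ddot f,\dddot f\in L^2_\rho(J;L^2(\Omega))$, the only regularity actually used in the proof of that lemma through Corollary~\ref{cor:estim_sfA}).
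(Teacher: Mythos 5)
Your proposal is correct and follows exactly the paper's route: the paper's proof is literally ``Combine Theorem~\ref{th:damped_apriori} with Lemma~\ref{lem:bnd_eta}'', i.e., the same splitting $u-\wht=(u-\uht)+\deltaht$ with the triangle inequality. Your bookkeeping converting the $\tau/\rho$ factors from~\eqref{eq:bnd_dot_eta}--\eqref{eq:bnd_grad_eta} into powers of $\kappa$ via $\tau\le\rho^{-1}\le\kappa$ is exactly what is needed and checks out.
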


\begin{proof}
Combine Theorem~\ref{th:damped_apriori} with Lemma~\ref{lem:bnd_eta}.
\end{proof}

\begin{remark}[Sharper a priori estimates]
At several places in the above proof, we used suboptimal estimates, like $\tau \leq \kappa$. 
Sharper estimates can be derived. They lead to the same convergence rates, but with higher
powers of $\tau$ multiplying some of the higher time-derivatives of $u$ (and $f$). 
\end{remark}

\bibliographystyle{amsplain}
\bibliography{biblio}

\end{document}